\documentclass[a4paper,12pt,reqno]{amsart}
\usepackage{geometry}
\usepackage[utf8]{inputenc}

\usepackage{amsfonts}  % Additional fonts
\usepackage{amsthm}    % Theorem environments
\usepackage{amsmath}   % Enhanced math features
\usepackage{bm}        % Bold math symbols
\usepackage{bbm}
\usepackage{amssymb}   % Additional math symbols
\usepackage{commath}	%Norm symbols
\usepackage{mathrsfs}
\usepackage{listings}
\usepackage{xcolor}

\usepackage[dvipsnames, table]{xcolor}	%Table and text colour
\usepackage{graphicx}  % Enhanced graphics inclusion
\usepackage{tikz}      % Vector graphics
\usepackage{tikz-cd}   % Commutative diagrams
\usepackage{pgfplots}  % Plotting graphs
\pgfplotsset{compat=1.18}
\usetikzlibrary{arrows.meta, positioning, calc, math}  % TikZ libraries
\tikzset{>={Classical TikZ Rightarrow[width=7pt, length=5pt]}}
\usetikzlibrary{cd}  % More TikZ libraries

\usepackage{array}     % Extended table features
\usepackage{xtab}      % Break long tables
\usepackage{url}
\usepackage{tabularx}
\usepackage{colortbl}
\usepackage{xltabular}

\usepackage{comment}
\usepackage[T1]{fontenc} %accent
\usepackage{lmodern} %retain original font
\usepackage{multicol}
\usepackage{pdflscape, adjustbox}	%landscape one page/section
\usepackage{parskip}       % Paragraph spacing
\usepackage{mathrsfs}      % Script fonts
\usepackage{fancyhdr}      % Fancy headers and footers
\usepackage[hidelinks]{hyperref}  % Hyperlinks without color
\usepackage{mathtools}     % More math symbols
\usepackage[shortlabels]{enumitem}  % Enhanced lists
\usepackage{witharrows}    % Arrows in math environments
\usepackage{float}         % Improved float control
\usepackage{makecell}
\usepackage{cleveref}	%auto type `Theorem/Proposition/Remark...' when using \ref
\usepackage{comment}

\newcommand{\Z}{\ensuremath{\mathbb{Z}}}% integers
\newcommand{\Q}{\ensuremath{\mathbb{Q}}}% rationals
\newcommand{\N}{\ensuremath{\mathbb{N}}}% natural numbers
\newcommand{\R}{\ensuremath{\mathbb{R}}}% real numbers
\newcommand{\C}{\ensuremath{\mathbb{C}}}% complex numbers
\newcommand{\F}{\ensuremath{\mathbb{F}}}% finite field
\newcommand{\q}{\ensuremath{\mathfrak{q}}}% small prime ideal q
\newcommand{\p}{\ensuremath{\mathfrak{p}}}% small prime ideal p
\newcommand{\m}{\ensuremath{\mathfrak{m}}}% small prime ideal m
\newcommand{\A}{\ensuremath{\mathfrak{a}}}% small prime ideal a
\newcommand{\f}{\ensuremath{\mathfrak{f}}}% small prime ideal f
\newcommand{\g}{\ensuremath{\mathfrak{g}}}% small prime ideal d
\newcommand{\PP}{\ensuremath{\mathcal{P}}}% power set font p 
\newcommand{\OO}{\ensuremath{\mathcal{O}}}% ring of integer O

\DeclareMathOperator{\Ann}{ann}
\DeclareMathOperator{\Ord}{ord}

\DeclareMathOperator{\Nm}{Nm}

\DeclareMathOperator{\End}{End}
\DeclareMathOperator{\Gcd}{gcd}
\DeclareMathOperator{\Sup}{sup}
\DeclareMathOperator{\Lcm}{lcm}
\DeclareMathOperator{\Norm}{Nm}
\DeclareMathOperator{\Ker}{ker}
\DeclareMathOperator{\Frob}{Frob}
\DeclareMathOperator{\Deg}{deg}
\DeclareMathOperator{\Art}{Art}
\DeclareMathOperator{\Gal}{Gal}
\DeclareMathOperator{\Cl}{Cl}

\newtheorem{prop}{Proposition}[section]

\newtheorem{theorem}[prop]{Theorem}
\newtheorem{lemma}[prop]{Lemma}

\newtheorem{cor}[prop]{Corollary}

\theoremstyle{definition}
\newtheorem{remark}[prop]{Remark}
\newtheorem{example}[prop]{Example}

\newtheorem{defn}[prop]{Definition}

\numberwithin{equation}{section}% standard style numbering, nothing special here

\crefname{prop}{Proposition}{Propositions}
\Crefname{prop}{Proposition}{Propositions}

\crefname{cor}{Corollary}{Corollaries}
\Crefname{cor}{Corollary}{Corollaries}

\crefname{defn}{Definition}{Definitions}
\Crefname{defn}{Definition}{Definitions}
\include{contents}
\begin{document}
\author{Edison H L Au-Yeung}
\title[Higher rank elliptic sieve via CM endomorphisms]{Higher rank elliptic sieve via CM endomorphisms}
\address{Mathematics Institute, Zeeman Building, University of Warwick, Coventry CV4 7AL, England}
\email{hang.au-yeung@warwick.ac.uk}
\date{\today}
\thanks{The author is supported by the Warwick Mathematics Institute Centre for Doctoral Training, and gratefully acknowledges funding from the University of Warwick.}

\begin{abstract}
A theorem of Landau and Ramanujan states that almost all integers are not sums of two squares (when ordered by their absolute value). Equivalently, almost all integers are not norms from the Gaussian rationals $\Q(i)$. In this paper, we prove a version of this norm result for elliptic curves $E$. Let $E$ be an elliptic curve over a number field $K$ with complex multiplication, such that $K$ contains the endomorphism ring of $E$.  For a fixed non-torsion point $P \in E(K)$ and an abelian extension $L/K$, we show that subject to a technical condition, the $y$-coordinate of almost all multiples of $P$ does not belong to the norm group of $L$. The proof is an adaptation of an argument of Bhakta et al.\ , whose main tools are elliptic divisibility sequences and a sieve technique known as the elliptic sieve. To account for the points in $E(K)$ arising from the additional endomorphisms, we apply the elliptic sieve to elliptic nets, a higher-rank generalisation of elliptic divisibility sequences. This approach demonstrates the possibility of applying the elliptic sieve to $\Z$--linear combination of points on elliptic curves and the usefulness of elliptic nets to counting problems on elliptic curves.
\end{abstract}

\maketitle
\setcounter{tocdepth}{1}
\tableofcontents
\section{Introduction}
\subsection{Motivation}\label{motivation sieve}
There is a well-known theorem, due to Landau \cite{Landau} and Ramanujan, claiming that almost all integers are not sums of two squares, when ordered by absolute value. In \cite{Brauer}, Bhakta et al.\ proved a version of this result for elliptic curves.

\begin{theorem}\cite[Theorem 1.1]{Brauer}
Let $E$ be an elliptic curve over $\Q$ given by an integral Weierstrass equation. Let $P\in E(\Q)$ have infinite order with $P \in E(\R)^{0}$, where $E(\R)^{0}$ denotes the connected component of the identity of $E(\R)$. Then there exists $\omega=\omega(E, P)>0$ such that 
\[
\#\{n \in \Z\colon |n|\leq B, y(nP) \text{ is a sum of two squares}\} \ll_{E, P} \frac{B}{\left(\log B\right)^{\omega}}. 
\]
\end{theorem}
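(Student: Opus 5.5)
The plan is to follow the elliptic divisibility sequence approach. Write $x(nP) = A_n/D_n^2$ and $y(nP) = C_n/D_n^3$ in lowest terms, with $D_n > 0$. Then $(D_n)$ is a divisibility sequence: $D_m \mid D_n$ whenever $m \mid n$. Moreover, for a prime $\ell$ of good reduction, $\ell \mid D_n$ exactly when $r_\ell \mid n$, where $r_\ell$ is the order of $P$ modulo $\ell$.

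A rational number $y = C/D^3$ with $\gcd(C,D)=1$ is a sum of two rational squares exactly when $y \geq 0$ and every prime $\ell \equiv 3 \pmod 4$ divides $C D^3$ to an even power. The obstruction we exploit sits in the denominator. If $\ell \equiv 3 \pmod 4$ and $v_\ell(D_n)$ is odd, then $v_\ell(y(nP)) = -3v_\ell(D_n)$ is odd, so $y(nP)$ is not a sum of two squares.

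So it suffices to show that for most $n$ there is a prime $\ell \equiv 3 \pmod 4$ with $v_\ell(D_n)$ odd. The valuation is controlled by the standard formula: for $\ell \nmid 2\Delta$ with $r_\ell \mid n$, we have $v_\ell(D_n) = v_\ell(D_{r_\ell}) + v_\ell(n/r_\ell)$.

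The sieve runs as follows.

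\begin{enumerate}
\item Define the set $\mathcal{L}$ of primes $\ell \equiv 3 \pmod 4$ of good reduction with $v_\ell(D_{r_\ell}) = 1$. Since $D_{r_\ell}$ has a primitive divisor for all large $r$ (Silverman), and $\ell^2 \mid D_{r_\ell}$ should be rare, we expect $\mathcal{L}$ to have positive relative density.
\item For $\ell \in \mathcal{L}$ and $r_\ell \mid n$ with $\ell \nmid n/r_\ell$, the valuation $v_\ell(D_n) = 1$ is odd. So every good $n$ must avoid the residue classes $n \equiv 0 \pmod{r_\ell}$, up to the harmless extra condition $\ell \nmid n$.
\item Apply a large sieve, or simply a Brun-type upper bound sieve, to $\{|n| \le B\}$, removing the multiples of $r_\ell$ for $\ell \in \mathcal{L}$ with $r_\ell \le z$. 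The survivors number about $B \prod (1 - 1/r_\ell)$.
\item Estimate this product. We need $\sum_{\ell \in \mathcal{L},\, r_\ell \le z} 1/r_\ell \geq \omega \log\log z$. Here $r_\ell \mid \#E(\F_\ell)$, which is at most $\ell + 1 + 2\sqrt{\ell}$, and $r_\ell$ is typically of size about $\ell$. So what is required is that $\mathcal{L}$ together with the condition $r_\ell \approx \ell$ has positive density, which comes from Chebotarev applied to $\Q(E[m], \tfrac1m P)$ for small $m$. One can restrict to primes with $r_\ell \geq \ell/ k$ for a fixed $k$, using the Kummer-theory density.
\end{enumerate}

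The hypothesis $P \in E(\R)^0$ enters as follows. The sign of $y(nP)$ could a priori make the problem trivial or vary irregularly, and the square and parity conditions interact with the real component. Its role is to make the archimedean condition give no extra savings, so we simply ignore sign.

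The main obstacle is step 4. We need a lower bound for $\sum 1/r_\ell$ over primes satisfying congruence conditions, namely $\ell \equiv 3 \pmod 4$, exact valuation $1$, and a large order $r_\ell$. The approach is to apply Chebotarev to the division fields $\Q(E[k], k^{-1}P, i)$, which shows that a positive proportion of $\ell$ have $r_\ell = \#\langle P \bmod \ell\rangle$ with $k \nmid \#E(\F_\ell)/r_\ell$ for bounded $k$. This gives density $\gg 1$ for primes with $r_\ell \gg \ell$. We then discard, via a Wieferich-type upper bound, the sparse set of primes with $\ell^2 \mid D_{r_\ell}$; these number $O(z/\log^2 z)$ by counting $D_r$ heights, since $\log D_r \asymp r^2$. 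With $r_\ell \asymp \ell$, Mertens then yields $\sum 1/r_\ell \gg \log\log z$. Taking $z = B^{c}$ in the sieve gives the bound $B/(\log B)^{\omega}$.
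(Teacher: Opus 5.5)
\textbf{There is a genuine gap.} It sits exactly where the argument of Bhakta et al.\ works differently; that argument is only cited in this paper, but it is adapted in \Cref{Lambda estimate}, \Cref{Lambda size} and the bounds for $N_0$ and $N_1$.

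\textbf{First problem: your sieve step fails for your choice of moduli.} The $r_p$ are orders of $P \bmod p$: composite, of uncontrolled factorisation, and sharing common factors.
\begin{itemize}
\item Deleting the classes $n\equiv 0 \pmod{r_p}$ does \emph{not} leave about $B\prod(1-1/r_p)$ survivors. The events $r_p\mid n$ are not independent: the density of $n$ divisible by two of them is $1/\Lcm(r_p,r_{p'})$, not $1/(r_pr_{p'})$.
\item Neither Brun's sieve nor the large sieve yields that product without a multiplicative structure on the moduli. Sets of multiples of a sequence with $\sum 1/r=\infty$ can have density bounded away from $1$ (Besicovitch, Erd\H{o}s).
\end{itemize}
The cure is to sieve by \emph{prime} indices. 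If $\ell$ is prime and $p\mid D_\ell$ with $p\nmid D_1$, then the order of $P\bmod p$ is exactly $\ell$. One then removes multiples of primes $\ell$ from a set of positive relative density among primes. A Selberg sieve gives $\prod(1-1/\ell)\approx(\log B)^{-\delta}$. The $n$ with $p\mid n/\ell$ are counted separately, as in $N_1$, using $\ell\ll p$ from Hasse.

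\textbf{Second problem: the density of $\mathcal{L}$ in your Step 1 is unproven.}
\begin{itemize}
\item Requiring $v_p(D_{r_p})=1$ is an elliptic non-Wieferich condition. Positive density, or even infinitude, of such primes is open unconditionally; for $\mathbb{G}_m$ infinitude is known only under the $abc$ conjecture.
\item Your height count gives nothing. We have $\log D_r\asymp r^2$, whereas all primes $p\asymp r$, even squared, contribute only $O(r)$ to $\log D_r$.
\item Chebotarev in finitely many fields $\Q(E[k],k^{-1}P)$ only controls prime factors of the index $[\tilde E(\F_p):\langle P\rangle]$ dividing $k$. So it does not give $r_p\gg p$, which is a Lang--Trotter/Artin-type problem. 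That condition is unnecessary anyway, since $r_p\le p+1+2\sqrt p$.
\end{itemize}
The actual argument never needs the exact exponent; it uses a congruence. If the odd positive integer $D_\ell$ satisfies $D_\ell\equiv 3\pmod 4$, then some prime $p\equiv 3\pmod 4$ divides it to an odd power. A positive proportion of primes $\ell$ with this property comes from two ingredients: periodicity of the elliptic divisibility sequence modulo $4$ (the rational analogue of \Cref{PID periodicity}), and Dirichlet's theorem for $\ell$ in a residue class.

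\textbf{This is also where $P\in E(\R)^0$ enters, and your sketch never uses it.} Your claim that its role is to make the sign harmless is not correct.
\begin{itemize}
\item Over $\Q$, the sequence that is periodic modulo $4$ is the signed sequence $W_n$ with $D_n=|W_n|$. So $\left(\frac{-4}{D_n}\right)$ also depends on the sign of $W_n$.
\item In the original argument, $P\in E(\R)^0$ is used, via these signs, to guarantee prime indices with $\left(\frac{-4}{D_\ell}\right)=-1$.
\item It is the counterpart of the explicit hypothesis $\chi(D_{\alpha_0})\notin\{0,1\}$ in \Cref{Main ideal norm result}. \Cref{sieve example 3} shows that without such an input the character of the denominators can be trivial at every relevant index, and the method then gives no saving.
\end{itemize}
That a hypothesis the method needs goes unused in your proposal is a symptom that your Step 1 assumes, rather than proves, the key input.
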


Note that the problem of writing an integer as a sum of two squares is equivalent to the problem of an integer being a norm from the Gaussian integers $\Z[i]$. Therefore, as an appplication, Bhakta et al.\ also generalised the above theorem to the norm problem in \cite{Brauer}, restricted to certain field extensions. 

\begin{theorem}\cite[Theorem 1.7]{Brauer} 
Let $K/\Q$ be a number field which contains a cyclic subfield which is not totally real. Let $E$ be an elliptic curve over $\Q$ given by an integral Weierstrass equation and let $P \in E(\Q)$ be a point of infinite order with $P \in E(\R)^{0}$. Then there exists $\omega=\omega(E, K)>0$ such that 
\[
\#\{n \in \Z\colon |n|\leq B, y(nP) \text{ is a norm from } K\} \ll_{E, P, K} \frac{B}{\left(\log B\right)^{\omega}}. 
\]
\end{theorem}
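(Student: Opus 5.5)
The plan follows the strategy of Bhakta et al.: reduce the norm condition to a local obstruction at many primes, and then count multiples of $P$ with the elliptic sieve. Let $F\subseteq K$ be a cyclic subfield of degree $d\geq 2$ that is not totally real, and let $\sigma$ generate $\Gal(F/\Q)$. If $y(nP)$ is a norm from $K$, it is also a norm from $F$. By the Hasse norm theorem, which holds for cyclic extensions, this means $y(nP)$ is a local norm everywhere. At a prime $p$ unramified in $F$ with residue degree $f_p>1$, an element is a local norm only if $f_p \mid v_p(y(nP))$. So it is enough to produce many primes $p$, each inert (or at least non-split) in $F$, at which $v_p(y(nP))$ is forced to be nonzero modulo $f_p$. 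The hypothesis that $F$ is not totally real is what guarantees the archimedean place is not the only obstruction. Complex conjugation is a nontrivial element of $\Gal(F/\Q)$, and Chebotarev then supplies a positive density of primes that are inert in the relevant sense. The condition $P\in E(\R)^0$ is used to control the sign and the real place, so that $y(nP)>0$ is not ruled out trivially.

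The key step is to locate primes dividing $y(nP)$ to exponent exactly $1$. Write $x(nP)=A_n/D_n^2$ and $y(nP)=C_n/D_n^3$, where $(D_n)$ is the elliptic divisibility sequence. The valuation at primes dividing the denominator is $-3v_p(D_n)$, which is a multiple of $3$ but not a multiple of an arbitrary $f_p$. The cleaner route is through the numerator. The $y$-coordinate vanishes exactly at the $2$-torsion points, so $p\mid C_n$ corresponds to $nP$ reducing to a nonzero $2$-torsion point modulo $p$. This happens precisely when $n\equiv m_p/2 \pmod{m_p}$, where $m_p$ is the order of $P$ modulo $p$ and $m_p$ is even. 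Generically the valuation at such primes is exactly $1$, so the numerator has $v_p=1$ for $n$ in this residue class modulo $m_p$. Hence $y(nP)$ is not a norm whenever there is a prime $p\in\mathcal P$ with $n\equiv m_p/2 \pmod{m_p}$, where $\mathcal P$ is the set of primes inert in $F$ with $m_p$ even.

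The remaining count is a sieve statement. The $n$ with $|n|\le B$ for which $y(nP)$ is a norm avoid the residue class $m_p/2 \bmod m_p$ for every $p\in\mathcal P$. I would apply the large sieve, or the elliptic sieve of Bhakta et al., with removed classes $\omega(m)=1$ modulo $m_p$. The moduli $m_p$ are not primes, so I would first restrict to primes with $m_p$ having a large prime factor, or use a Selberg-type sieve over the composite moduli. This yields a bound of $B/(\log B)^{\omega}$, where $\omega$ is the relative density of $\mathcal P$.

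I expect the main obstacle to be proving that $\mathcal P$ has positive density. This needs a joint Chebotarev condition: $p$ is inert in $F$ and the order of $P$ mod $p$ is even. The second condition means $P\notin 2E(\F_p)$ in a suitable sense, and that is governed by the Kummer extension $\Q(E[2^k],\tfrac{1}{2^k}P)$. I would show, as Bhakta et al.\ do via $P\in E(\R)^0$ and a Galois-theoretic independence argument, that there is a positive proportion of Frobenius elements in the compositum of $F$ and these Kummer fields that restrict to a generator-type element on $F$ and have even order action. A secondary technical point is dismissing the finitely many primes of bad reduction and those where $v_p>1$.
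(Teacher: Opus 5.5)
Your reduction to the local condition $f_p\mid v_p(y(nP))$ at inert primes is fine. The decisive gap is your claim that $v_p(y(nP))=1$ ``generically'' on the class $n\equiv m_p/2\pmod{m_p}$.

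Take $p$ odd of good reduction, lift the $2$-torsion point to $T\in E(\Q_p)[2]$, and put $R=(m_p/2)P-T$. Then $2R=m_pP$, and $nP-T=(2j+1)R$ for $n=(2j+1)m_p/2$. Since $y$ is a uniformiser at $T$, you get $v_p(y(nP))=\nu_p(m_pP)+v_p(2j+1)=v_p(D_{m_p})+v_p(2j+1)$. So the exponent you need is the exponent of $p$ in the denominator $D_{m_p}$. Your ``generic'' claim amounts to $p^2\nmid D_{m_p}$ for a positive proportion of $p\in\mathcal P$, which is an open elliptic analogue of the non-Wieferich problem. Without it you have no prime at which $f_p\nmid v_p(y(nP))$, so this is not a secondary technicality.

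The argument of \cite{Brauer}, adapted in Sections 3--4 here, is designed to avoid knowing any individual valuation. It works with the denominators $D_\ell$ at prime indices $\ell$ and a character $\chi$ attached to $F$. You may take $\chi$ odd of $2$-power order, so the exponent $3$ in $y=C_n/D_n^3$ is harmless. If $\chi(|D_\ell|)=\prod_p\chi(p)^{v_p(D_\ell)}\neq 1$, then some $p\mid D_\ell$ has $\Ord\chi(p)\nmid v_p(D_\ell)$, whatever the exponents are. For such $p$ (outside a finite set) $m_p=\ell$, and by \Cref{valuation} $v_p(D_n)=v_p(D_\ell)$ for all $n\in\ell\Z\setminus p\ell\Z$. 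A positive proportion of such $\ell$ comes from the periodicity of $\chi(D_n)$, a consequence of the EDS recurrence (cf.\ \Cref{PID periodicity}), plus Dirichlet's theorem in progressions modulo the period (cf.\ \Cref{Lambda estimate}). It does not come from a Kummer-theoretic density computation for $\mathcal P$.

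This is also where the hypotheses genuinely enter. ``Not totally real'' supplies an odd $\chi$, and $P\in E(\R)^0$ handles the signs of the $D_\ell$ so that a prime index with nontrivial character value exists. This is the analogue of $\chi(D_{\alpha_0})\notin\{0,1\}$ in \Cref{Main ideal norm result}; without it the method yields nothing (\Cref{sieve example 3}). Your stated role for $P\in E(\R)^0$ is backwards: for an upper bound, $y(nP)<0$ would only help.

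The sieve step also fails as stated. The class $m_p/2\bmod m_p$ is exactly $\{n: v_2(n)=v_2(m_p)-1,\ k_p\mid n\}$, where $k_p$ is the odd part of $m_p$. So each $p$ sieves only one $2$-adic layer of $n$, and all $p$ with the same $m_p$ remove the same class. To reach $B/(\log B)^{\omega}$, every layer $v_2(n)=b$ with $b$ up to about $\omega\log_2\log B$ would need moduli $k_p$ whose reciprocal sum grows like $\log\log B$, uniformly in $b$. The density of $\mathcal P$ among primes $p$ says nothing about this, because the saving is governed by the distinct moduli $m_p$, not by $p$. So ``$\omega=$ relative density of $\mathcal P$'' is unjustified. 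Indexing the sieve by prime orders $\ell=m_p$ and removing $0\bmod\ell$ (the set $\Lambda$ and the counts $N_0,N_1$ in the paper) treats all $n$ uniformly and gives prime moduli directly.

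Two smaller errors remain.
\begin{enumerate}
\item For a general integral Weierstrass equation, the zeros of $y$ are the points $(e,0)$ with $e$ a root of $x^3+a_2x^2+a_4x+a_6$. Such a point is $2$-torsion only if $a_1e+a_3=0$. Otherwise ``$p$ divides the numerator of $y(nP)$'' is not a congruence condition on $n$ at all.
\item ``$m_p$ even'' means the $2$-primary part of $P\bmod p$ is nonzero, which is weaker than $P\notin 2E(\F_p)$.
\end{enumerate}
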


One of the main tools used by Bhakta et al.\ is a sieving technique named \textit{the elliptic sieve}, a term previously introduced by Kowalski in \cite[Section 4.4]{Kowalski}. Despite sharing the same name, the two sieves are fundamentally different. Kowalski's elliptic sieve is an application of the classical large sieve, which sieves the Mordell--Weil group $E(\Q)$ with respect to the reduction homomorphism $E(\Q) \rightarrow E(\F_p)$. The goal of Kowalski's sieve is to show that most rational points have denominators divisible by many primes (\cite[Theorem 1.1]{Kowalski}); thus, information modulo $p$ is sufficient. In contrast, for the norm problem, one must know the $p$-adic valuation of a rational point to determine whether its coordinate(s) could be norm(s) from a given field extension. Consequently, Bhakta et al.\ actually sieved modulo $p^{n_p}$ for varying primes $p$ and growing exponents $n_p$.

One possible generalisation of the elliptic sieve method is to extend the norm problem to $\Z$-linear combinations of points: instead of examining $y(nP)$, we consider $y(nP+mQ)$ for non-torsion points $P, Q \in E(\Q)$. However, the authors of \cite{Brauer} note that their elliptic sieve method cannot currently handle cases where the rank is greater than $1$. In the rank $1$ setting, applying the sieve involves examining multiples of a fixed point $[\ell]P$, where $\ell$ runs through all rational primes. One can show that there is a positive proportion of primes $\ell$ that both occur as the order of $P \bmod p$ in the finite group $E(\F_p)$ and simultaneously violate the norm conditions (\cite[Lemma 4.2]{Brauer}). These are the primes that Bhakta et al.\ sieved. For elliptic curves of rank higher than $1$, simply analysing $\Z$-linear combinations $[n]P+[m]Q$ for primitive lattice points $(n, m) \in \Z^2$ yields an insufficient proportion of such points to make meaningful conclusions from the sieve. Furthermore, there is also a lack of effective methods for obtaining $p$-adic information regarding the sum $[n]P+[m]Q$. For example, we can use the subgroup filtration property (see (\ref{filtration})) for elliptic curves of rank $1$, which is not applicable for $\Z$-linear combinations $[n]P+[m]Q$. We refer the interested reader to \cite[Sections 1.3, 1.4, and Lemma 4.3]{Brauer} for further details.

In this regard, the main objective of this paper is to demonstrate the possibility of applying the elliptic sieve to $\Z$-linear combinations $[n]P+[m]Q$ for elliptic curves with complex multiplication, which allows us to prove analogues of the above results. If an elliptic curve has complex multiplication by $\Z[\tau]$, an order in an imaginary quadratic field, then we are allowed to consider the point $[a+b\tau]P$ as a $\Z$--linear combination of two points $[a]P+[b]([\tau]P)$, where $a, b \in \Z$.  This effectively enables the application of the elliptic sieve in a rank $2$ scenario. Although this requires the elliptic curve to be defined over a number field containing its endomorphism ring (and thus no longer over $\Q$), this approach allows us to generalise the result to arbitrary abelian extensions.

\begin{theorem}\label{Main ideal norm result}
Let $E/K$ be an elliptic curve over a number field $K$ given by an integral Weierstrass equation, and suppose that $E$ has complex multiplication by an order $\OO$ in the imaginary quadratic field $F\subseteq K$. Let $L/K$ be a finite abelian extension with conductor $\f=\mathfrak{f}_{L/K}$.

Let $P\in E(K)$ be a non-torsion point and $D_{\alpha}$ be the elliptic denominator sequence associated to $P$, to be defined in \Cref{denominator sequence def}. If there exists $\alpha_0\in\mathcal{O}$ with $\alpha_0 \in (\OO/\pi\OO)^{\times}$, where $\pi \in \OO$ is defined in \Cref{Lambda estimate}, such that $(D_{\alpha_0}, \f)=1$ and the image of $D_{\alpha_0}$ under the Artin map $\Art_{L/K}^{\f} \colon I_{K}^{\f} \longrightarrow \Gal(L/K)$ is nontrivial, then there exists $\omega=\omega(E,P)>0$ such that, for every $\varepsilon>0$,
\[
\#\left\{
\alpha\in\mathcal{O}\colon |\alpha|\leq B,\;
y(\alpha P)\text{ is an ideal norm from }L
\right\}
\ll_{E,P,L}
\frac{B}{(\log B)^{\omega-\varepsilon}},
\]
where the implicit constant has dependence on $P$ of the form $\exp\left(
O_{E,L}\left(\widehat h(P)/\log\widehat h(P)\right)\right)$.
\end{theorem}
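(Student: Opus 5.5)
The plan is to adapt the sieve argument of Bhakta et al.\ to the lattice $\OO\cong\Z^2$, using elliptic nets in place of elliptic divisibility sequences. Write $(y(\alpha P)) = \mathfrak{A}_\alpha D_\alpha^{-3}$ with $D_\alpha$ the denominator ideal of \Cref{denominator sequence def}. The starting observation is that if $y(\alpha P)$ is an ideal norm from $L$, then its ideal lies in the norm group $N_{L/K}(I_L^{\f})\cdot P_{K,1}^{\f}$ away from $\f$. So for every prime $\p\nmid\f$ at which $v_\p(y(\alpha P))$ is not a multiple of the residue degree $f_\p$, the Artin symbol of $\p$ must be trivial. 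Equivalently, some condition forced by the Artin map must hold at every prime of bad valuation.

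The sieve set is built as follows. For each $\alpha\in\OO$, the primes $\p$ with $v_\p(D_\alpha)>0$ are exactly those at which $\alpha P$ reduces to $0$, i.e.\ the order ideal of $P \bmod \p$ in the $\OO$-module $\widetilde E(\F_\p)$ divides $\alpha$. I use the hypothesis on $\alpha_0$ to produce many primes $\p$ whose Artin symbol is nontrivial (taking $\p\mid D_{\alpha_0\lambda}$ for primes $\lambda\in\OO$ via the divisibility and primitive-divisor property of the denominator sequence). For such $\p$, the formal group filtration $v_\p(D_{\beta\lambda}) = v_\p(D_\lambda)+v_\p(\beta)$, the CM analogue of (\ref{filtration}), shows that $v_\p(y(\alpha P))=-3v_\p(D_\alpha)$ is controlled by $v_\p(\alpha)$ once $\lambda\mid\alpha$. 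The goal is then to show that, for a positive density of primes $\lambda$ (Chebotarev/Hecke density in the ray class group modulo $\pi\f$, which is where $\alpha_0\in(\OO/\pi\OO)^\times$ and \Cref{Lambda estimate} enter), the condition ``$\lambda\|\alpha$ and $\p_\lambda\nmid\f$'' forces $y(\alpha P)$ to be a non-norm. This gives a residue-class obstruction: $\alpha$ must avoid the classes $\alpha\equiv 0 \bmod\lambda$ with $\alpha\not\equiv0\bmod\lambda^2$ for each sieving prime $\lambda$, or more precisely the union over exponents with bad valuation mod $f_\p$.

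Next I apply an upper-bound sieve on the lattice $\{\alpha\in\OO:|\alpha|\le B\}$, such as Selberg's sieve or a large sieve in $\Z^2$, with sieving primes $\lambda$ of norm up to $B^{c}$ drawn from the positive-density set $\Lambda$. Each $\lambda$ removes a proportion $\approx 1/N\lambda$ of the $\alpha$, and $\sum_{\lambda\in\Lambda,N\lambda\le z}1/N\lambda \sim \delta\log\log z$. This yields the bound $B^2(\log B)^{-\omega}$ with $\omega=\delta$, up to the normalisation of $|\alpha|\le B$ used in the statement. The $\varepsilon$ loss and the dependence $\exp(O(\widehat h(P)/\log\widehat h(P)))$ come from discarding small primes $\lambda$ below a height-dependent threshold, where the primitive-divisor and valuation statements fail. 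That threshold is controlled by the Siegel/Lang-type lower bound $\log N D_\alpha \ge c\,|\alpha|^2\widehat h(P)$, and by discarding a bounded number of exceptional primes.

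The main obstacle is the identification step: showing that for a positive proportion of $\lambda$ the associated prime $\p$ divides $D_\lambda$ to exactly the right order and has nontrivial Artin symbol. The difficulty is controlling the $\p$-adic valuations of $D_\alpha$ for all $\alpha$ simultaneously, not just along multiples of a single point. My first attempt would use the elliptic net $W(a,b)$ attached to $(P,[\tau]P)$. Its recurrence and the identity $D_{\alpha}\mid D_{\alpha\beta}$ come from the $\OO$-module structure. I would then transport the multiplicativity of the Artin map: if $\mathrm{Art}(D_{\alpha_0})\ne1$, then $\mathrm{Art}(D_{\alpha_0\lambda}D_{\lambda}^{-1}D_{\alpha_0}^{-1})$ can be tracked along residue classes of $\lambda$ modulo $\pi\f$, using periodicity of the net modulo $\f$. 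A Hecke-character equidistribution argument then forces nontriviality for a positive density of $\lambda$.
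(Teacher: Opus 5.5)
Your overall architecture matches the paper's: a local obstruction at primes dividing $D_\alpha$, sieving $\OO$ by the annihilator primes $(\lambda)=\Ann(P\bmod\p)$, a Selberg sieve over $F$, and a positive density of sieving primes from equidistribution modulo the period $\pi$. The gap is in the step you single out as the main obstacle. That step is exactly where the hypothesis on $\alpha_0$ is used, and the mechanism you propose for it fails.

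\textbf{How the paper does it.} The paper applies periodicity (\Cref{PID periodicity}, \Cref{Non PID periodicity}) to $D_\lambda$ itself. It takes principal primes $\lambda\equiv\alpha_0\pmod{\pi}$ of $\OO$ prime to $f$, a set of density $1/(\#(\OO/\pi\OO)^{\times}h(\OO))$.
\begin{itemize}
\item Periodicity gives $D_\lambda\sim D_{\alpha_0}$, so $\chi(D_\lambda)=\chi(D_{\alpha_0})\neq1$.
\item Multiplicativity of $\chi$ over the factorisation of $D_\lambda$ then forces a prime $\p\mid D_\lambda$ with $\Ord(\chi(\p))\nmid\nu_\p(D_\lambda)$. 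This, not merely a nontrivial Artin symbol at $\p$, is what the sieve needs.
\item Since $(\lambda)$ is maximal, $\Ann(P\bmod\p)=(\lambda)$ unless $P\equiv O\bmod\p$, i.e.\ $\p\in T$.
\end{itemize}
No primitive-divisor theorem or Siegel/Lang-type bound is involved.

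\textbf{Why your version fails.} You instead track $D_{\alpha_0\lambda}$ and the quotient $D_{\alpha_0\lambda}D_\lambda^{-1}D_{\alpha_0}^{-1}$.
\begin{itemize}
\item The hypothesis does not control the class of this quotient. For example, for $\lambda\equiv1\pmod{\pi}$ periodicity gives exactly $[D_1]^{-1}$, which is trivial when $P$ is integral.
\item Apart from finitely many primes, every prime dividing the quotient has $\Ann(P\bmod\p)$ contained in a proper ideal dividing $(\alpha_0)$. So the conditions it produces only remove $\alpha$ lying in finitely many fixed proper ideals of $\OO$ (none at all if $\alpha_0$ is a unit). This cannot give an $o(B)$ bound.
\end{itemize}

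\textbf{The local condition is also wrong.} In $\nu_\p([\beta\lambda]P)=\nu_\p([\lambda]P)+\nu_\p(\beta)$, the term $\nu_\p(\beta)$ is the valuation at the prime $\p$ of $\OO_K$. For $\p$ of good reduction with $\Ann(P\bmod\p)=(\lambda)$ one has $1-\eta\in(\lambda)$ with the Frobenius $\eta\in\p$, so $\lambda\notin\p$.
\begin{itemize}
\item Hence ``$\lambda\,\|\,\alpha$'' does not force a non-norm. The correct condition (the lemma following \Cref{Lambda size}) is $\lambda\mid\alpha$ and $\p\nmid\alpha\lambda^{-1}\OO_K$. The classes to exclude are $\alpha\in\lambda\OO$ with $\alpha\lambda^{-1}\in\p$, not $\alpha\in(\lambda^2)$.
\item Bounding these (the paper's $N_1$) uses the Hasse bound $\Nm(\lambda)\ll\Nm(\p)$ together with the cutoff $\Nm(\lambda)\ge\log B$. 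Via Mertens, that cutoff produces the $\log\log B$ factor and hence the $\varepsilon$.
\item The factor $\exp\bigl(O(\widehat h(P)/\log\widehat h(P))\bigr)$ comes from discarding the $\#T=\omega(D_1)\ll\widehat h(P)/\log\widehat h(P)$ bad primes from the sieving set, not from a height threshold.
\end{itemize}

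\textbf{Two refinements the paper also glosses over.} You correctly record $\nu_\p(y(\alpha P))=-3\nu_\p(D_\alpha)$, so the obstruction actually required is $f_\p\nmid3\nu_\p(D_\lambda)$ at some $\p\notin T$. For $\lambda$ lying below no prime of $T$, $D_\lambda=D_1\cdot(\text{part supported off }T)$. The step therefore needs $\chi\bigl(D_{\alpha_0}D_1^{-1}\bigr)^3\neq1$, not just $\chi(D_{\alpha_0})\neq1$. A careful version of your identification step should build in both the cube and the $D_1$ contribution.
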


Similar to the original work, the main tool that allows us to analyse $p$--adic information of $\Z$--linear combination of points is elliptic nets, which is a higher rank generalisation of elliptic divisibility sequences (EDSs). Elliptic nets are first defined by Stange in \cite{Stange1}, which was then used by the author to define CM EDS in \cite{Edison1}. They are defined via the denominators of the $x$--coordinates of the multiples of a fixed point $P$. Under our assumptions, \(E\) has complex multiplication by an order \(\mathcal O\) in an imaginary quadratic field \(F\), with \(F\subseteq K\). Therefore, \(K\) is totally imaginary. In comparison with the original paper \cite{Brauer}, these assumptions allow us to avoid issues with signs of elliptic nets, hence simplifying our work and making the arguments easier to follow. On the other hand, we also managed to show periodicity of CM EDS, which is a key property for applying the sieve method as in \cite{Brauer}. We recall the relevant definitions in \Cref{EDS for CM curves}.

We end this section by remarking that the technical condition in the theorem concerning the existence of the denominator ideal $D_{\alpha}$ is necessary. In \Cref{sieve example}, we provide three examples to illustrate this theorem. In \Cref{example Q(i)} and \Cref{sieve example 2}, we consider the elliptic curve $E \colon y^2 = x^3 - 2x$, which has complex multiplication by $\mathbb{Z}[i]$, defined over number fields with different class numbers. In both examples, we consider the $EDS$ associated to the point $P=(-1, 1)$ and show that it satisfies all the hypotheses, therefore the conclusion of the theorem holds. In contrast, \Cref{sieve example 3} considers the elliptic curve 
\[
E/\Q(\sqrt{-2})\colon y^2=x^3+x^2-3x+1
\]
and the EDS associated to the point $P=(0, -1)$. In this example, there does not exist a multiple $\alpha_0 \in \OO$ such that the hypotheses are all satisfied and the conclusion fails. More specifically, our method is only able to show that roughly $50\%$ of the points in $E(\Q(\sqrt{-2}))$ are candidates for being ideal norms from the $8$--th cyclotomic field $\Q(\zeta_8)$. 

\begin{remark}
Similarly, the result by Bhakta et al.\ \cite[Theorem 1.7]{Brauer} includes a technical condition, namely $P \in E(\mathbb{R})^{0}$, which is necessary for a similar reason: the image of the denominator ideal under the Dirichlet character (Kronecker symbol) is nontrivial. For comparison and a more detailed explanation, we refer the reader to the proofs of \cite[Theorem 1.6, Proposition 3.9]{Brauer} and compare it with \Cref{Lambda estimate}.
\end{remark}

\subsection{Outline of the paper}
The paper is organized as follows. In \Cref{preliminary}, we recall the necessary facts about elliptic curves with complex multiplication over $K_{\p}$ and explain the use of (ray class) group characters to detect elements in the norm group, generalising the use of Dirichlet characters in \cite{Brauer}. In \Cref{CM EDS periodicity section}, we provide a brief exposition of CM EDS and then prove their periodicity. After proving the main theorem in \Cref{Proof of sieve main result}, we give examples illustrating our main result in \Cref{sieve example} and the conclusion could fail if the technical conditions are not satisfied.

\subsection{Notations and conventions}\label{Notations}
Throughout the paper, unless otherwise specified, we will use the following notations and conventions. By Weierstrass equation with coefficients $a_i$, we mean an elliptic curve $E/K$ defined by the equation below:
\begin{equation}\label{EC}
E\colon y^2 + a_1xy + a_3y = x^3 + a_2x^2 + a_4x + a_6; a_i \in \OO_{K}.
\end{equation}

We also assume that $E$ has complex multiplication by an order $\OO$ in an imaginary quadratic field $F\subseteq K$, generated by $\{1, \tau\}$, so $\OO=\Z[\tau]=\End(E)$. Under this assumption, $K$ must be a totally imaginary number field; we also restrict our attention to only finite abelian extensions $L/K$. By $O_{a, \ldots, z}(A)$ we mean a quantity with absolute value at most $CA$ for some positive constant $C$ depending on $a, \ldots, z$ only; if the subscripts are omitted the implied constant is absolute. We write $A\ll_{a, \ldots, z}B$ for $A=O_{a, \ldots, z}(B)$ and $A=o(B)$ for $A/B \rightarrow 0$. 

\section{Preliminaries}\label{preliminary}
In this section, we collect some facts concerning the framework of the methodology, consisting of some facts about elliptic curves with complex multiplication and the use of ray class groups to detect elements in the norm group. 
 
\subsection{Elliptic curves with complex multiplication over $K_{\p}$}
Denote by $E_0(K_{\p})$ the set of points of $E(K_{\p})$ with non--singular reduction modulo $\p$. From \cite[\S7, Proposition VII.6.3]{Silverman1}, we know that for $n \geq 1$, there is a subgroup filtration
\begin{equation}\label{filtration}
\cdots \subset E_{2}(K_{\p}) \subset E_{1}(K_{\p}) \subset E_0(K_{\p}), \, E_{n}(K_{\p})=\{P\in E_{0}(K_\p)\colon P\equiv O \bmod \p^n\} \cup \{O\}.
\end{equation}

\begin{defn}
If $P \in E(K_\p)\setminus E_1(K_\p)$, we set $\nu_{\p}(P)=0$. If $P\in E_1(K_\p)$, we define 
\[
\nu_{\p}(P)=\Sup\{i\colon P\in E_i(K_\p)\}. 
\]
\end{defn}

When $E$ has complex multiplication by $\OO$, then in fact the above filtration forms a $\OO$--module filtration. 

\begin{theorem}\label{Streng module}
For any $n\geq 0$, the group $E_{n}(K_\nu)$ is a $\Z[\omega]$--submodule of $E(K_\nu)$. Moreover, for $n \geq 1$, we have an isomorphism of $\OO$--modules
\[
E_{n}(K_{\p})/E_{n+1}(K_{\p})\cong l, 
\]
where $l=\OO_{K_{\p}}/\p$ is the residue field of $K_{\p}$. 
\end{theorem}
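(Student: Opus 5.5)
We work in the formal group $\widehat{E}$ of $E$ over $\OO_{K_\p}$, where $\p$ is a prime of good reduction for the integral Weierstrass model; at primes of bad reduction we use the same formal-group description of $E_1(K_\p)$. We read $\omega$ as the generator $\tau$ of $\OO=\Z[\omega]$ and $K_\nu$ as $K_\p$.

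\emph{Step 1: each $E_n(K_\p)$ is an $\OO$-submodule.} Since $F\subseteq K$, every endomorphism is defined over $K$. Each $\alpha\in\OO$ is given by rational functions with coefficients in $K$. These coefficients are $\p$-integral for all but finitely many $\p$, and in general after using the integral model and the normalised invariant differential. The identification $E_n(K_\p)\cong \widehat{E}(\p^n)$ (\cite[VII.2.2]{Silverman1}) is compatible with endomorphisms, because $[\alpha]$ induces a power series
\[
[\alpha](z)=\alpha z+\cdots\in \OO_{K_\p}[[z]]
\]
on $\widehat{E}$. This holds since $\End(\widehat E)$ receives $\End(E)$ and the coefficients are integral, as $\alpha$ is integral over $\Z$ and the series is determined by its linear term. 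Hence $[\alpha]$ maps $\widehat{E}(\p^n)$ into itself, so $E_n(K_\p)$ is stable under $\OO$ for $n\ge1$. For $n=0$, the group $E_0(K_\p)$ consists of points reducing to nonsingular points. Stability under $[\alpha]$ follows because $[\alpha]$ reduces to a map on the reduced curve sending smooth points to smooth points (for good reduction, $E_0=E$ trivially). Alternatively, one can describe $E_0$ via the Néron model, whose identity component is preserved by the extended endomorphisms.

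\emph{Step 2: the graded pieces.} The usual isomorphism $E_n/E_{n+1}\cong \widehat{E}(\p^n)/\widehat{E}(\p^{n+1})\cong \p^n/\p^{n+1}\cong l$ is given by $z\mapsto z$. Apply $[\alpha]$ to $z\in\p^n$. Since the higher terms of $[\alpha](z)$ lie in $\p^{2n}\subseteq\p^{n+1}$, we get
\[
[\alpha](z)\equiv \alpha z \pmod{\p^{n+1}}.
\]
So $\OO$ acts on the quotient through $\OO\to\OO_{K_\p}\to l$. After choosing a uniformiser to identify $\p^n/\p^{n+1}$ with $l$, this is exactly the natural $\OO$-module structure on $l$, which proves the isomorphism.

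\emph{Main obstacle.} The key point is to justify that $[\alpha](z)$ has coefficients in $\OO_{K_\p}$ with linear term exactly $\alpha$. The linear term is the normalisation $[\alpha]^*\omega=\alpha\omega$ (\cite[II.2.3]{Silverman2}). Integrality comes from the formal group law being integral together with the fact that the endomorphism maps the formal group over $\OO_{K_\p}$ to itself. This is the point at which we cite \cite[IV.4]{Silverman1}, or Streng's thesis.
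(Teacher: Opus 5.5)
Your architecture is the standard formal-group argument underlying \cite[Corollary 2.10]{Streng}, which the paper simply cites (with \cite[Theorem 2.13]{Edison1} for $n=0$): pass to the formal group, show $[\alpha](z)=\alpha z+\cdots\in\OO_{K_\p}[[z]]$, then read off the action on $\p^n/\p^{n+1}$. Your Step 2 is correct once Step 1 is granted. The gap is in Step 1, the only step with real content: you justify the integrality of $[\alpha](z)$ by a principle that is false. An endomorphism of an integral formal group can have linear term integral over $\Z$, be determined by that term, and still have non-integral coefficients. Take $p\equiv 3\pmod 4$ and $f(z)=(1+z)^{i}-1=\exp(i\log(1+z))-1$, an endomorphism of $\widehat{\mathbb G}_m$ over $\Q_p(i)$ with linear term $i$. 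It even satisfies $f\circ f=[-1]$. Yet its $z^p$-coefficient $\binom{i}{p}$ has valuation $-1$, because $\prod_{j=0}^{p-1}(i-j)\equiv i^p-i=-2i \pmod p$ is a unit. Your \emph{Main obstacle} paragraph does not repair this. The assertion that the endomorphism \emph{maps the formal group over $\OO_{K_\p}$ to itself} is just a restatement of integrality. Also, \cite[IV.4]{Silverman1} only gives the normalisation $[\alpha]'(0)=\alpha$ via the invariant differential, not integrality.

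The missing input is geometric.
\begin{itemize}
\item By the N\'eron mapping property, $[\alpha]$ extends to the N\'eron model $\mathcal E$ of $E$ over $\OO_{K_\p}$. It preserves the identity component $\mathcal E^0$, which is the smooth locus of a minimal Weierstrass model.
\item Completing along the zero section gives $[\alpha](z_{\min})=\sum_k c_k z_{\min}^k$ with $c_k\in\OO_{K_\p}$. Alternatively, pass to an extension over which $E$ acquires good reduction and intersect with $K_\p[[z]]$.
\item For the given integral, possibly non-minimal, model one has $z_{\min}=u\,h(z)$ with $u\in\OO_{K_\p}$ and $h(z)=z+\cdots\in\OO_{K_\p}[[z]]$. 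Hence $[\alpha](z)=h^{-1}\bigl(\sum_k c_k u^{k-1}h(z)^k\bigr)$, with $h^{-1}$ the compositional inverse, is again integral.
\end{itemize}
The same input is needed for $n=0$. There, your claim that \emph{$[\alpha]$ reduces to a map on the reduced curve} has no meaning at a prime of bad reduction, and such primes occur (e.g.\ $(\sqrt{-2})$ in the paper's third example). For a minimal model, $E_0(K_\p)=\mathcal E^0(\OO_{K_\p})$, which is $\OO$-stable because homomorphisms of group schemes preserve identity components. For a non-minimal integral model with $m=\nu_\p(u)\ge 1$, one checks directly that $E_n(K_\p)=E^{\min}_{n+m}(K_\p)$ for all $n\ge 0$, so everything reduces to the minimal case.
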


\begin{proof}
For $n=0$, see \cite[Theorem 2.13]{Edison1}; for $n\geq 1$, see \cite[Corollary 2.10]{Streng}.
\end{proof}

For the valuation of a multiple of a point, we have the following inequality. 

\begin{lemma}\label{valuation}
Let $P \in E_1(K)$ and $\alpha \in \End(E)$, then $\nu_{\p}\left([\alpha]P\right) \geq \nu_{\p}(P)+\nu_{\p}(\alpha)$, with equality if $\p \nmid \alpha\OO_K$.
\end{lemma}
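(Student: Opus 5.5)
The plan is to work in the formal group of $E$ at $\p$ and use the $\OO$--module structure of the filtration given by \Cref{Streng module}. Since $P\in E_1(K)$, we have $P\in E_1(K_\p)$. Write $n=\nu_\p(P)\geq 1$ and let $z=-x/y$ be the formal group parameter, so that $\nu_\p(z(P))=n$. Let $\hat E$ be the formal group of $E$ over $\OO_{K_\p}$. Every endomorphism $\alpha\in\End(E)$ acts on $\hat E$ by a power series
\[
[\alpha](T)=\alpha T+\sum_{i\ge2} c_i T^i,\qquad c_i\in\OO_{K_\p}.
\]
Here the linear coefficient is $\alpha$ itself, viewed in $K$ through the normalised embedding $\OO\subseteq F\subseteq K$. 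That the coefficients are integral should follow from the integrality of the Weierstrass model: the Weierstrass equation has coefficients in $\OO_K$, and $[\tau]$ preserves $E_1$ by \Cref{Streng module}. If integrality of the higher coefficients turns out to be delicate, I would fall back on the theory of formal groups with CM by $\OO$ in Silverman.

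\textbf{The inequality.} Substitute $T=z(P)$. The linear term has valuation $\nu_\p(\alpha)+n$. For $i\ge 2$, each term $c_iz(P)^i$ has valuation at least $in\geq 2n$. This alone does not give the bound when $\nu_\p(\alpha)>n$, so the plan is to use the stronger fact that $[\alpha](T)\in \alpha T\,\OO_{K_\p}[[T]]+(\text{terms controlled by }\alpha)$. I expect this step to be the main obstacle. My first approach is to factor $\alpha$. Write $\alpha\OO_K\subseteq \p^{e}$ with $e=\nu_\p(\alpha)$. I would reduce to endomorphisms whose valuation is positive and compare with multiplication by integers. Choose $m=\Nm(\alpha)=\alpha\bar\alpha\in\Z$. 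For the integer $m$, the classical formula $[m](T)=mT+\dots$ together with the standard estimate $\nu_\p([m]Q)\ge \nu_\p(Q)+\nu_\p(m)$ (Silverman IV/VII, via the formal logarithm or Lubin--Tate style arguments) gives the result. Then I would use $[m]=[\bar\alpha]\circ[\alpha]$ and a symmetric argument to split the valuation between $\alpha$ and $\bar\alpha$.

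A cleaner alternative is to use the formal logarithm. For $\nu_\p(z)$ large, $\log$ is an isometry and $\log([\alpha]T)=\alpha\log T$, which gives equality for large $n$. Small $n$ is handled by the explicit power series above.

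\textbf{Equality when $\p\nmid\alpha\OO_K$.} In this case $\nu_\p(\alpha)=0$, so $\alpha$ is a unit in $\OO_{K_\p}$. The linear term $\alpha z(P)$ then has valuation exactly $n$, while every higher term has valuation at least $2n>n$. Hence $\nu_\p(z([\alpha]P))=n$. Equivalently, by \Cref{Streng module}, $\alpha$ acts on $E_n/E_{n+1}\cong l$ through its reduction, which is nonzero in $l$. It therefore acts invertibly, so $[\alpha]P\notin E_{n+1}(K_\p)$. Combined with the inequality, this gives $\nu_\p([\alpha]P)=n$.

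The genuinely hard part is the inequality when $\p\mid\alpha$, in particular when $\p$ is ramified or lies over a small prime. There I would rely on the standard integer case and the norm factorisation described above.
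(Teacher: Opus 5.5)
\textbf{The equality case is correct.} It is exactly the paper's argument. Since $E_n(K_\p)$ is an $\OO$-submodule by \Cref{Streng module}, we have $[\alpha]P\in E_n(K_\p)$. Since $\alpha$ acts on $E_n(K_\p)/E_{n+1}(K_\p)\cong\OO_K/\p$ as multiplication by $\alpha\bmod\p\neq0$, we get $[\alpha]P\notin E_{n+1}(K_\p)$. This needs only the submodule property, not the general inequality. It also makes your power-series version unnecessary, which helps, because you had not justified the integrality of its coefficients.

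\textbf{The inequality when $\p\mid\alpha\OO_K$ has a genuine gap, and it cannot be closed: the inequality is false in this generality.} The integer estimate $\nu_\p([m]Q)\ge\nu_\p(Q)+\nu_\p(m)$ that you want to import is only guaranteed when $\nu_\p(Q)\ge\nu_\p(p)/(p-1)$. This is automatic over $\Q_p$, which is why it is harmless over $\Q$. Your step ``small $n$ is handled by the explicit power series'' is precisely where things break: $c_iz^i$ with $i\ge2$ has valuation only $\ge in$, and nothing forces $\alpha\mid c_i$.

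Concretely, $[2](T)=2T-a_1T^2-2a_2T^3+\cdots$. So if $a_1$ is a $\p$-unit and $1\le\nu_\p(P)<\nu_\p(2)$, then $\nu_\p([2]P)=2\nu_\p(P)<\nu_\p(P)+\nu_\p(2)$. This happens for $E\colon y^2+xy=x^3-x^2-2x-1$, which has CM by the maximal order of $\Q(\sqrt{-7})$ and $a_1=1$. Take $K=\Q(\sqrt{-7},i)$, where $\nu_\p(2)=2$ for $\p\mid 2$; the inequality then fails at any point with $\nu_\p(P)=1$.

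Your norm-factorisation step has an independent flaw. A lower bound for $\nu_\p([\bar\alpha][\alpha]P)$ gives one for $\nu_\p([\alpha]P)$ only when $[\bar\alpha]$ preserves $\nu_\p$, i.e.\ when $\p\nmid\bar\alpha\OO_K$. There is no ``symmetric'' splitting otherwise.

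\textbf{What survives.} $\OO$ acts on every graded piece through $\OO\to\OO_K/\p$, so $\q=\p\cap\OO$ kills each $E_m(K_\p)/E_{m+1}(K_\p)$. Hence $\q^kE_n(K_\p)\subseteq E_{n+k}(K_\p)$, and $\nu_\p([\alpha]P)\ge\nu_\p(P)+k$ for $\alpha\in\q^k$. This coincides with the stated bound when $\p$ is unramified over $\q$; in the example above, $\nu_\q(2)=1$.

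Separately, your logarithm argument does prove the stated equality $\nu_\p([\alpha]P)=\nu_\p(P)+\nu_\p(\alpha)$ under the extra hypothesis $\nu_\p(P)>\nu_\p(p)/(p-1)$. There the formal logarithm is an $\OO$-linear isometry, and $[\alpha]P$ stays in its domain by the submodule property.

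The paper's own proof records only the one-step fact that $[\alpha]$ acts as $0$ on the graded piece, i.e.\ $\nu_\p([\alpha]P)\ge\nu_\p(P)+1$, so it shares the same deficiency. Fortunately only the equality case is used later: in the sieve lemma, $\alpha\ell_\p^{-1}$ is prime to $\p$.
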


\begin{proof}
From \cite[Corollary 2.10]{Streng}, we know that there is an isomorphism of $\OO$--modules $E_i(K)/E_{i+1}(K) \cong \OO_K/\p$, so $E_i(K)/E_{i+1}(K)$ is a residue field of size $\Norm(\p)$. If $\nu_{\p}(\alpha\OO_K)=0$, then $[\alpha]$ acts as an automorphism in $\OO_K/\p$. Otherwise, $[\alpha]$ acts as multiplication by $0$ in $\OO_K/\p$. This proves the lemma. 
\end{proof}

\subsection{Ray class group and the norm condition}\label{ray class group and norm}
In this section, we provide a brief explanation on the use of ray class group to determine if an element in a number field can be an ideal norm in a finite abelian extension. Let $\m$ be an integral ideal in $K$. Since we have assumed that $K$ contains an imaginary quadratic field $F$, so $K$ has no real embeddings. Throughout this paper, we use the following notations. 
\begin{itemize}
\item $I_{K}$, the group of fractional ideals in $K$. 
\item $I^{\m}_{K} \subseteq I_{K}$, the subgroup of fractional ideals coprime to $\m$.
\item $P_{K}^{\m} \subseteq I^{\m}_{K}$, the subgroup of principal fractional ideals $(\alpha)$ with the generator $\alpha$ satisfying $\alpha \equiv 1 \bmod \m$. 
\item $Cl_{K}^{\m}=I_{K}^{\m}/P_{K}^{\m}$, the ray class group with modulus $\m$.
\end{itemize}

Recall that for any finite abelian extension of number fields $L/K$, the norm map $\Nm_{L/K}\colon I_L \rightarrow I_K$ is the unique homomorphism such that for any prime ideal $\PP$ of $L$, $\Norm_{L/K}(\PP)=\p^{f(\PP/\p)}$ for $\p$ unramified in $L$, where $f(\PP/\p)=[\OO_L/\PP\colon \OO_K/\p]$ is the degree of the residue field extension. Therefore, if $\A \in N_{L/K}(I_L)$ with $\p \mid \A$, then we necessarily need $\nu_{\p}(\A)$ to be divisible by $f(\PP/\p)$. 

Let $\m$ be a modulus divisible by the conductor $\mathfrak{f}_{L/K}$ of the extension $L/K$, then the global Artin map with respect to $\m$ is 
\begin{align*}
\Art_{L/K}^{\m} \colon I_{K}^{\m} &\longrightarrow \Gal(L/K), \\
\p_1^{n_1} \ldots \p_t^{n_t} &\mapsto \prod_{i=1}^{t} \Frob_{L/K}(\p_i)^{n_i},
\end{align*}
where $ \Frob_{L/K}(\p_i)$ is the unique Frobenius element in $\Gal(L/K)$ that acts as the Frobenius automorphism on the residue field. For $\PP$ lying above $\p$, $ \Frob_{L/K}(\p)$ has order $f(\PP/\p)$ as an element in $\Gal(L/K)$. The kernel of the Artin map is the Takagi group associated to $\m$, $T_{L/K}^{\m} \coloneqq P_{K}^{\m}N_{L/K}(I_{L}^{\m})$. In particular, this implies $N_{L/K}(I_{L}^{\m})\subseteq T_{L/K}^{\m}$ and $P_{K}^{\m} \subseteq T_{L/K}^{\m}$. It is known that the global Artin map is surjective (see \cite[Theorem 8.2]{Cox}), so by the first isomorphism theorem, the Artin map induces an isomorphism between the Galois group $\Gal(L/K)$ and a quotient of the ray class group $Cl_{K}^{\m}$: 
\begin{align*}
I_{K}^{\m}/T_{L/K}^{\m} &\xrightarrow{\sim} \Gal(L/K);\\
 [\A] &\mapsto \Art_{L/K}^{\m} (\A)=\prod_{\p \mid \A}\Frob_{L/K}(\p)^{n_{\p}}.
\end{align*}
Now idenitfy $\Gal(L/K)$ with $I_{K}^{\m}/T_{L/K}^{\m}$, let 
\begin{equation}\label{character}
\chi\colon I_{K}^{\m}/T_{L/K}^{\m}\rightarrow \C^{\times}
\end{equation}
be a group character. Extend $\chi$ to $I_K$ as folllows: for a fractional ideal $\A \in I_{K}$, set  $\chi(\A)=0$ whenever $\A$ is not coprime with $\m$. Now the value of $\chi(\p)$ encodes the behaviour of $ \Frob_{L/K}(\p)$ in $\Gal(L/K)$. If $\chi(\A)=1$ for all group character $\chi$, then $\A$ is an element in the Takagi group $T_{L/K}^{\m}$, a necessary condition for $\A$ to be an element in the norm group $N_{L/K}(I_L^{\m})$. This is exactly the norm detecting condition that we we will use in our elliptic sieve: if $y([\alpha]P) \in K$ is a norm in $L$, then we require $\Ord(\chi(\p)) \mid \nu_\p([\alpha]P)$.
\begin{remark}
One should note that $N_{L/K}(I_L^{\m})$ is a subgroup of the Takagi group $T_{L/K}^{\m}$, therefore the use of group characters to perform the elliptic sieve will result an upper bound that is not sharp. However, we note that this is not the main source of error. 
\end{remark}
\section{Elliptic divisibility sequences for CM curve}\label{CM EDS periodicity section}
\subsection{Elliptic divisibility sequences and elliptic denominator sequences}\label{EDS for CM curves}
Let $E/K$ be an elliptic curve given by a Weierstrass equation (\ref{EC}) with coefficients $a_i \in \OO_K$. Let $P=(x, y)$ be a non-torsion point on $E(K)$, $n$ an integer. It is known that there exist rational functions $\psi_n$ on $E$ satisfying 
\begin{equation}\label{expression}
x(nP)=\frac{x(P)\psi_n^2-\psi_{n-1}\psi_{n+1}}{\psi_{n}^{2}},
\end{equation}
where $\psi_n$ is called the $n$-division polynomial. One important property that is relevant for us is that the $n$--division polynomial satisfies the recurrence relation 
\begin{equation}\label{recurrence}
\psi_{n+m}\psi_{n-m}\psi_r^2 = \psi_{m+r}\psi_{m-r}\psi_n^2 -\psi_{n+r}\psi_{n-r}\psi_m^2  \text{ for any integers } n>m>r,
\end{equation}
as shown in \cite[Exercise 3.7(c)]{Silverman1}. The sequence $\psi_n$ forms a divisibility sequence in $\Z[x, y]$, that is, $\psi_n \mid \psi_m$ for $n \mid m$. For more details and basic properties of $\psi_n$, see \cite[Chapter 2]{Lang}, or \cite[Section 1]{Akbary} for a modern exposition. 

If $E/K$ admits complex multiplication by an order $\OO$ in an imaginary quadratic field $F \subseteq K$, there are existing generalisations of division polynomials that takes the extra endomorphisms $\alpha \in \OO\setminus \Z$ into consideration. For example, in \cite{Satoh} Satoh proved the existence of generalised division polynomials for elliptic curves with complex multiplication, while Stange extended the definition of division polynomials from being attached to endomorphisms to arbitrary isogenies in \cite{Stange4}; both authors demonstrated that these polynomials satisfy certain recurrence relations subject to certain restrictions. For our purpose, we use the generalisation defined in \cite{Edison1}, which uses the framework of elliptic nets and therefore free from those restrictions; the author of \cite{Edison1} named them CM division polynomials. Elliptic nets are higher rank generalisation of elliptic divisibility sequences, in particular they are used to study $\Z$--linear combination of points on elliptic curves. We refer interested readers to \cite{Stange1} for more details. For $z \in \OO=\Z[\tau]$, the CM division polynomial satisfies 
\begin{equation}\label{expression2}
x([z]P)=\frac{x(P)\psi_{z}^2-\psi_{z-1}\psi_{z+1}}{\psi_{z}^{2}}=\frac{x\left([\tau]P\right)\psi_{z}^2-\psi_{z-\tau}\psi_{z+\tau}}{\psi_{z}^{2}},
\end{equation}
and the recurrence relation 
\begin{equation}\label{CM recurrence}
\psi_{\alpha+\beta}\psi_{\alpha-\beta}\psi_{\gamma}^2=\psi_{\alpha+\gamma}\psi_{\alpha-\gamma}\psi_{\beta}^2-\psi_{\beta+\gamma}\psi_{\beta-\gamma}\psi_{\alpha}^2 \text{ for any } \alpha, \beta, \gamma \in \OO.
\end{equation}

Regardless of whether the curve $E$ admits complex mutliplication, in general there will be some common factors between the numerator and the denominator (ideals) in (\ref{expression}), so the expression will not be in lowest terms. For our purpose, we want to work with the genuine denominator of a point. 

\begin{defn}\label{denominator sequence def}
Let $P=(x, y)$ be a non-torsion point in $E(K)$. For $\alpha \in \OO=\End(E)$, we define the sequence of ideals $D_{\alpha}$ in $\OO_K$, indexed by elements of $\OO$, by 
\begin{equation}
x\left([\alpha]P\right)\OO_{K}=A_{\alpha}D_{\alpha}^{-2},
\end{equation}
where $A_{\alpha}$ and $B_{\alpha}$ are coprime $\OO_{K}$--ideals. If $K$ has class number $1$, we instead define the sequence of elements $B_{\alpha}$, indexed by elements of $\OO$, where $D_{\alpha}=\left(B_{\alpha}\right)$. We will call $\left(D_{\alpha}\right)_{\alpha \in \OO}$ and $\left(B_{\alpha}\right)_{\alpha \in \OO}$ elliptic denominator sequences (of ideals and elements respectively). 
\end{defn}

The main reason we make the definition above is to avoid ambiguity in the terminology EDS. One way of defining an EDS in a commutative ring would be a divisibility sequence satisfying (\ref{recurrence}), while another closely related (but not exactly equivalent) definition of EDS is the sequence of denominators of $\left([\alpha]P\right)_{\alpha \in \End(E)}$, which we have termed the elliptic denominator sequence $\left(D_{\alpha}\right)_{\alpha \in \OO}$ above. An elliptic denominator sequence is generally not an EDS, as it need not satisfy the recurrence relation (\ref{CM recurrence}). In \cite[Example A.2]{Edison1}, the author showed that differences can occur if $P$ admits primes of bad reduction. However, subject to a condition on the indices, the author also showed in \cite[Theorem 1.7]{Edison1} that elliptic denominator sequences do satisfy a weakened version of (\ref{CM recurrence}).

\begin{prop}\label{CM EDS}
Let $M$ be an element in the ideal
\[
\mathfrak{M}(P)=\Lcm\{\Ann(P \bmod \p) \colon \p \text{ prime }\},
\]
where $\Ann(P \bmod \p)$ is the annihilator of $P$ in the $\OO$--module $E(K_{\p})/E_{0}(K_{\p})$. Let $\alpha, \beta$ and $\gamma$ be elements of $\OO=\End(E)$.
\begin{enumerate}
\item If $\OO$ is a principal ideal domain and two of $\alpha, \beta$ and $\gamma$ are multiples of $M$, the generator of $\mathfrak{M}(P)=(M)$, then the sequence of elements $\left(B_{\alpha}\right)_{\alpha \in \OO}$ can be chosen to satisfy
\begin{equation}
B_{\alpha+\beta}B_{\alpha-\beta}B_{\gamma}^2=B_{\alpha+\gamma}B_{\alpha-\gamma}B_{\beta}^2-B_{\beta+\gamma}B_{\beta-\gamma}B_{\alpha}^2.
\end{equation}
\item If $\OO$ is not a principal ideal domain and each of $\alpha, \beta$ and $\gamma$ is either a multiple of $M$, or coprime to $M$, then there is a sequence of ideals $\{\mathfrak{g}_{\alpha}\}_{\alpha \in \OO}$ in $\OO_K$ with 
\begin{align}
D_{\alpha}&=\psi_{\alpha}\mathfrak{g}_{\alpha}^{-1}, \\
\psi_{\alpha+\beta}\psi_{\alpha-\beta}\psi_{\gamma}^2 &= \psi_{\alpha+\gamma}\psi_{\alpha-\gamma}\psi_{\beta}^2 - \psi_{\beta+\gamma}\psi_{\beta-\gamma}\psi_{\alpha}^2, \\
\mathfrak{g}_{\alpha+\beta}\mathfrak{g}_{\alpha-\beta}\mathfrak{g}_{\gamma}^2 &= \mathfrak{g}_{\alpha+\gamma}\mathfrak{g}_{\alpha-\gamma}\mathfrak{g}_{\beta}^2 = \mathfrak{g}_{\beta+\gamma}\mathfrak{g}_{\beta-\gamma}\mathfrak{g}_{\alpha}^2.
\end{align}
\end{enumerate}
\end{prop}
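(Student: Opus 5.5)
The plan is to compare the CM division polynomial values $\psi_\alpha=\psi_\alpha(P)$ of \eqref{expression2} with the genuine denominators $D_\alpha$ one prime at a time, and then to assemble the local comparisons into the correction ideals $\mathfrak{g}_\alpha$ (or the choice of generators $B_\alpha$).

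First fix a prime $\p$ of $\OO_K$ and study the local discrepancy
\[
e_\p(\alpha)\coloneqq\nu_\p(\psi_\alpha)-\nu_\p(D_\alpha).
\]
The input is the formal-group/N\'eron-model description of $\psi_\alpha$: the function $\psi_\alpha$ has divisor $\sum_{T\in E[\alpha]}(T)-\Nm(\alpha)(O)$. By Theorem \ref{Streng module} and Lemma \ref{valuation}, whenever $[\alpha]P\in E_1(K_\p)$ and $P$ has good reduction at $\p$, the valuation $\nu_\p(\psi_\alpha(P))$ equals $\nu_\p([\alpha]P)$, which is $\nu_\p(D_\alpha)$. So $e_\p(\alpha)=0$ at good primes, and only the finitely many bad primes contribute. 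At a bad prime $\p$, I would use the local height decomposition as in Stange's treatment of elliptic nets: $-\log|\psi_\alpha(P)|_\p$ differs from $\nu_\p(D_\alpha)\log\Nm\p$ by a quadratic form in $\alpha$ plus a term depending only on the class of $[\alpha]P$ in $E(K_\p)/E_0(K_\p)$. The component-group correction is periodic with respect to $\Ann(P\bmod\p)$, since the class of $[\alpha]P$ depends only on $\alpha$ modulo that annihilator. The quadratic part satisfies the parallelogram law, and that law is exactly compatible with the exponents in \eqref{CM recurrence}.

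Next, restrict to indices that are multiples of $M$ or coprime to $M$. For multiples of $M$, the point $[\alpha]P$ lies in $E_0(K_\p)$ for every $\p$, so $e_\p(\alpha)$ is a pure quadratic form $q_\p(\alpha)$. For $\alpha$ coprime to $M$, the class of $[\alpha]P$ is a unit multiple of the class of $P$, and I expect the periodic term to be constant on these classes.

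Define $\mathfrak{g}_\alpha=\prod_\p\p^{e_\p(\alpha)}$. Then $D_\alpha=\psi_\alpha\mathfrak{g}_\alpha^{-1}$ holds by construction. The recurrence for $\psi$ is \eqref{CM recurrence} itself. The multiplicative relation for $\mathfrak{g}$ reduces, prime by prime, to showing that the exponent of $\p$ agrees across the three triples of indices. For quadratic forms, the identity
\[
q(\alpha+\beta)+q(\alpha-\beta)+2q(\gamma)=2q(\alpha)+2q(\beta)+2q(\gamma)
\]
is symmetric in $\alpha,\beta,\gamma$, which gives the required equality.

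For part (1), when $\OO$ is a PID, two of the three indices are multiples of $M$, and $K$ has class number one, I would choose generators $B_\alpha$ so that $\psi_\alpha=u\,B_\alpha\,g^{q(\alpha)}$, with the quadratic exponent realised by a fixed element raised to a quadratic form. Rescaling a net by a quadratic exponential preserves \eqref{CM recurrence}, so the $B_\alpha$ satisfy the exact recurrence. The units can then be chosen consistently.

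The main obstacle is the bad-prime analysis: showing that the non-quadratic component-group correction is genuinely constant, or cancels, on the allowed index classes. This is why the hypothesis requires two indices to be multiples of $M$ (respectively, each index to be a multiple of $M$ or coprime to $M$). I would first try to handle it by an explicit case check on the Kodaira types, using the fact that the component group, viewed as an $\OO$-module, is killed by $\Ann(P\bmod\p)$.
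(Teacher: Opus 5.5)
The paper gives no proof of this proposition; it imports it from \cite[Theorem 1.7]{Edison1}, so your sketch has to stand on its own. The framework is sensible. Via the local-height formula for elliptic nets, $e_\p(\alpha)=\nu_\p(\psi_\alpha)-\nu_\p(D_\alpha)$ splits as $q_\p(\alpha)+c_\p([\alpha]P)$, with $q_\p$ quadratic plus a constant and $c_\p$ an even function on $E(K_\p)/E_0(K_\p)$ vanishing on the identity class.

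The gap is the step you flag and leave open, the treatment of $c_\p$. Constancy of $c_\p$ on classes $[\alpha]P$ with $\alpha$ coprime to $M$ would not suffice, because the $\g$-relation involves $\alpha\pm\beta$, $\alpha\pm\gamma$, $\beta\pm\gamma$, which are in general neither multiples of $M$ nor coprime to it. If only $\gamma$ lies in $(M)$, comparing $\p$-exponents requires
\[
c_\p([\alpha+\beta]P)+c_\p([\alpha-\beta]P)=2c_\p([\alpha]P)+2c_\p([\beta]P).
\]
This is a parallelogram law that component corrections do not satisfy; already $\beta\equiv\alpha \bmod M$ demands $c_\p([2\alpha]P)=4c_\p([\alpha]P)$. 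Concretely, take a prime where the class of $P$ in $E(K_\p)/E_0(K_\p)$ has order $3$ (e.g.\ type IV), and put $B=\nu_\p(\psi_2(P))\geq 1$. Then $e_\p(n)=B(n^2-1)/3$ for $3\nmid n$ and $e_\p(n)=Bn^2/3$ for $3\mid n$. If $3\in\mathfrak{M}(P)$, the admissible triple $(\alpha,\beta,\gamma)=(1,4,3)$ with $M=3$ gives $\p$-exponents $17B$, $16B$, $16B$ for $\g_5\g_{-3}\g_3^2$, $\g_4\g_{-2}\g_4^2$, $\g_7\g_1\g_1^2$. So in general no Kodaira case check can close the configurations of (2) with fewer than two indices in $(M)$; the asserted relation fails there.

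What your approach does prove, with no case analysis, is the relation whenever two of the indices lie in $(M)$, the third being arbitrary. Since $M\in\Ann(P\bmod\p)$ for every $\p$, if $\beta,\gamma\in(M)$ then $[\alpha\pm\beta]P$ and $[\alpha\pm\gamma]P$ lie in the class of $[\alpha]P$, and $[\beta\pm\gamma]P\in E_0(K_\p)$. Hence each of the three products picks up exactly $2c_\p([\alpha]P)$; the other placements are identical, using evenness of $c_\p$. This is the hypothesis of (1), and it is the only configuration used later in the paper: the triples built from $M$, $r$ and $\alpha+(x+y\tau)r$ in \Cref{symmetry law} and \Cref{Ray class symmetry}.

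The same issue affects your construction in (1). No normalisation $\psi_\alpha=u\,B_\alpha\,g^{q(\alpha)}$ with $q$ quadratic can hold when some $c_\p([\alpha]P)\neq 0$. Instead, fix generators $\varpi_\p$ and set $B_\alpha=\psi_\alpha\prod_\p\varpi_\p^{-e_\p(\alpha)}$. On any triple where the three exponent sums agree, dividing the $\psi$-recurrence by the common factor gives the $B$-recurrence exactly, with no unit bookkeeping.

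Two smaller corrections:
\begin{itemize}
\item $e_\p$ is not $0$ at good primes where $\nu_\p(P)>0$. There it is a nonzero quadratic function of $\alpha$ (equal to $-n^2\nu_\p(D_1)$ for $n\in\Z$), which is harmless for the argument.
\item The equality $\nu_\p(\psi_\alpha)=\nu_\p(D_\alpha)$ at good primes needs an Ayad-type theorem for elliptic nets, not \Cref{Streng module} and \Cref{valuation}.
\end{itemize}
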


%It is worth noting that the CM division polynomials used in this paper are odd functions: they satisfy $\psi_{-\alpha}=-\psi_{\alpha}$  (see \cite[Proposition 2.1]{Stange1} for proof). Therefore, if $D_{\alpha}$ is a principal ideal with generator $B_{\alpha}$, we use the convention $B_{-\alpha}=-B_{\alpha}$ and permute the variables if necessary.
In comparison with division polynomials, elliptic denominator sequences have better $\p$--adic properties. Furthermore, they are also divisibiltiy sequences. Before we state the divisibility properties, we first note that we did not assume $\OO$ to be a principal ideal domain, therefore we need to extend the elliptic denominator sequence from an $\OO$--index seqeunce to an ideal--indexed sequence. This is due to Streng in \cite[Section 3]{Streng}, who actually first generalised elliptic denominator sequences (although he termed them elliptic divisibility sequences in \cite{Streng}) to curves with complex multiplication, in order to generalise a result concerning primitive divisors of EDS. 

\begin{defn}
Let $P=(x, y)$ be a non-torsion point in $E(K)$. Let $\A$ be an ideal in $\OO$. An elliptic denominator sequence indexed by ideals $\A$ of $\OO$, associated to $P$, is given by 
\[
D_{\A}=\sum_{\alpha \in \A}D_{\alpha}.
\]
\end{defn}

\begin{prop}[Divisibility properties] \label{divisibility}
Let $\alpha, \beta$ be elements in $\OO$, $\mathfrak{a}, \mathfrak{b}$ be ideals in $\OO$ and $\alpha \in \mathfrak{a}, \beta \in \mathfrak{b}$.
\begin{enumerate}[(i)]
\item Weak divisibility: for all $\alpha, \beta \in \OO$, if $\alpha \mid \beta$, then $D_{\alpha} \mid D_{\beta}$.
\item Strong divisibility: $D_{\mathfrak{a}+ \mathfrak{b}}=D_{\mathfrak{a}}+ D_\mathfrak{b}$. 
\end{enumerate}
\end{prop}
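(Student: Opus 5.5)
The plan is to work one prime at a time. Both statements are equalities or divisibilities of integral ideals of $\OO_K$, so it suffices to compare $\p$-adic valuations for every prime $\p$ of $K$. The first step is to turn $D_\alpha$ into a valuation of a point. By definition, $x([\alpha]P)\OO_K=A_\alpha D_\alpha^{-2}$ with $A_\alpha, D_\alpha$ coprime. Hence $\nu_\p(D_\alpha)>0$ exactly when $x([\alpha]P)$ has a pole at $\p$. In that case $[\alpha]P\in E_1(K_\p)$ and $\nu_\p(x([\alpha]P))=-2\nu_\p([\alpha]P)$. This gives
\[
\nu_\p(D_\alpha)=\nu_\p([\alpha]P)\quad\text{for all }\alpha\in\OO,
\]
with the convention $\nu_\p(O)=\infty$ for torsion multiples. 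Since $P$ is non-torsion, only $\alpha=0$ gives $O$.

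For weak divisibility, suppose $\beta=\gamma\alpha$ with $\gamma\in\OO$. If $\nu_\p(D_\alpha)=0$ there is nothing to prove. Otherwise $[\alpha]P\in E_{n}(K_\p)$ with $n=\nu_\p([\alpha]P)\ge1$. By \Cref{Streng module}, $E_n(K_\p)$ is an $\OO$-submodule, so $[\beta]P=[\gamma]([\alpha]P)\in E_n(K_\p)$. Therefore $\nu_\p(D_\beta)\ge\nu_\p(D_\alpha)$; \Cref{valuation} gives the same inequality directly. Ranging over all $\p$ yields $D_\alpha\mid D_\beta$.

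For strong divisibility, the key intermediate claim is that for each $\p$ and each $n\ge1$ the set
\[
\Lambda_{\p,n}=\{\alpha\in\OO\colon [\alpha]P\in E_n(K_\p)\}
\]
is an ideal of $\OO$. It is closed under addition because $E_n(K_\p)$ is a subgroup. It is closed under multiplication by $\OO$ by the module property in \Cref{Streng module}. Now $\nu_\p$ of a sum of ideals is the minimum of the valuations, so the definition $D_\A=\sum_{\alpha\in\A}D_\alpha$ gives $\nu_\p(D_\A)=\min_{\alpha\in\A}\nu_\p([\alpha]P)$. Thus $\nu_\p(D_\A)\ge n$ if and only if every $\alpha\in\A$ has $[\alpha]P\in E_n(K_\p)$, that is, if and only if $\A\subseteq\Lambda_{\p,n}$. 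Since $\Lambda_{\p,n}$ is an ideal, $\A\subseteq\Lambda_{\p,n}$ and $\mathfrak b\subseteq\Lambda_{\p,n}$ together are equivalent to $\A+\mathfrak b\subseteq\Lambda_{\p,n}$. This gives
\[
\nu_\p(D_{\A+\mathfrak b})=\min\bigl(\nu_\p(D_\A),\nu_\p(D_{\mathfrak b})\bigr)=\nu_\p(D_\A+D_{\mathfrak b}),
\]
and hence $D_{\A+\mathfrak b}=D_\A+D_{\mathfrak b}$.

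The main obstacle is the first step: identifying $\nu_\p(D_\alpha)$ with $\nu_\p([\alpha]P)$ exactly, and showing that $\Lambda_{\p,n}$ is an $\OO$-ideal at every prime, including primes of bad reduction. I would handle this by noting that the filtration $E_n(K_\p)$, $n\ge1$, is described purely by the formal group, since it consists of points with $\nu_\p(x)\le-2n$. Membership in it is therefore independent of the reduction type, and the $\OO$-stability for $n\ge1$ is exactly the content of \cite[Corollary 2.10]{Streng} as quoted in \Cref{Streng module}. One also needs that the Weierstrass model is integral, so that $\nu_\p(x)<0$ forces $[\alpha]P$ into $E_1(K_\p)$ with $\nu_\p(x)=-2\nu_\p$ even. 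This holds because $\nu_\p(x)=-2k$, $\nu_\p(y)=-3k$ for points reducing to $O$ on an integral model.
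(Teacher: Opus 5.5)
Your proof is correct and is essentially the argument behind the paper's proof, which just cites Streng's Lemmata 3.2 and 3.3. As there, you work prime by prime, identify $\nu_{\p}(D_{\alpha})$ with $\nu_{\p}([\alpha]P)$, and use that $\{\alpha\in\OO\colon [\alpha]P\in E_n(K_{\p})\}$ is an ideal of $\OO$ because $E_n(K_{\p})$ is an $\OO$-submodule for $n\geq 1$ (\Cref{Streng module}). One small refinement: (ii) needs only that this set is closed under addition, and the $\OO$-stability is genuinely used only in (i), when the cofactor $\gamma$ with $\beta=\gamma\alpha$ lies outside $\Z$.
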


\begin{proof}
See \cite[Lemmata 3.2, 3.3]{Streng}.
\end{proof}

\begin{remark}
By weak divisibility, we have $D_{\alpha}=D_{\alpha\OO}$ for all $\alpha \in \OO$. It is also worth emphasising that when we state the strong divisibility property, one must refer to an ideal--indexed sequence. 
\end{remark}

\subsection{Periodicity of CM EDS}
In this section, we show that the sequence of ray classes corresponding to the denominator ideals $D_{\alpha}$ is periodic in ray class groups with respect to arbitrary modulus $\m$. If the elliptic curve $E/K$ is defined over a class number $1$ field, then we can show that the sequence $B_{\alpha}$ is periodic modulo any prime power and hence any element in $\OO_K$. The proofs in both cases share a similar structure; we first show that the sequence modulo a prime (resp. in ray class) forms a periodic sequence of a certain form, then conclude the proof of periodicity by finding an upper bound of the period and with the use of the Chinese Remainder theorem. For clarity purpose, we separate the proofs of the two cases. 

\subsubsection{Imaginary fields of class number $1$}
\begin{prop}[Symmetry Law]\label{symmetry law}
Let $M$ be as in \Cref{CM EDS}. Let $\alpha, r \in \OO$ with $r$ being a multiple of $M$. Let $\p$ be a prime in $\OO_K$ and $k \in \N$. Suppose that $K$ is a class number $1$ number field and $\p^k$ divides $B_{r}/\Gcd(B_r, B_M)$, then for all $x+y\tau \in \OO=\Z[\tau]$,
\[
B_{\alpha+(x+y\tau)r}\equiv \begin{cases}
			 U^{\frac{x(x-1)}{2}}V^{\frac{y(y-1)}{2}}
\left(\frac{B_{\alpha+r}}{B_\alpha}\right)^{x}
\left(\frac{B_{\alpha+r\tau}}{B_\alpha}\right)^{y}
\left(\frac{B_{\alpha}B_{\alpha+(1+\tau)r}}
{B_{\alpha+r}B_{\alpha+r\tau}}\right)^{xy}
B_{\alpha} \bmod \p^k, & \text{if $\p^k \nmid B_\alpha$}\\
            0 \bmod \p, & \text{if $\p^k \mid B_{\alpha}$},
		 \end{cases}
\]
where
\[
U=\frac{B_{M+r}B_{M-r}}{B_M^2},
\qquad
V=\frac{B_{M+r\tau}B_{M-r\tau}}{B_M^2}
\]
and all the quotients in the first case are $\p$--adic units. 
\end{prop}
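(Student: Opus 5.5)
The plan is to adapt the classical symmetry law for elliptic divisibility sequences (Ward's periodicity argument, as used in \cite{Brauer}) to the rank-two lattice $\alpha+\Z r+\Z r\tau$. The recurrence will come from \Cref{CM EDS}(1). Since $K$ has class number $1$, the $B_\beta$ are elements. Because $r$ and $r\tau$ are multiples of $M$, any instance of the recurrence in which two of the three indices lie in $M\OO$ is available to us.

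The first step is the case $\p^k \mid B_\alpha$. Here I would use the ideal-indexed divisibility of \Cref{divisibility}. The hypothesis $\p^k \mid B_r/\Gcd(B_r,B_M)$ forces $\p \mid B_r$, and by weak divisibility $\p\mid B_{(x+y\tau)r}$. Strong divisibility, in the form $D_{\alpha\OO+r\OO}=D_\alpha+D_r$, should then give $\p \mid B_{\alpha+(x+y\tau)r}$. The point is that $\alpha+(x+y\tau)r\in \alpha\OO+r\OO$, so its denominator is divisible by $D_{\alpha\OO+r\OO}$, which is divisible by $\p$. This yields the second case modulo $\p$.

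For the main case $\p^k\nmid B_\alpha$, I will first treat the one-dimensional lines. Apply the recurrence with indices $(\alpha+nr,\ r,\ M)$, where two of these, $r$ and $M$, are multiples of $M$. This gives
\[
B_{\alpha+(n+1)r}B_{\alpha+(n-1)r}B_M^2=B_{\alpha+nr+M}B_{\alpha+nr-M}B_r^2-B_{r+M}B_{r-M}B_{\alpha+nr}^2 .
\]
Modulo $\p^k$ the term with $B_r^2$ vanishes to order $2k$, or rather one uses the gcd condition: write $B_r=B_M\cdot(B_r/\Gcd)\cdot(\text{unit part})$, so that after dividing by $B_M^2$ that term is still $0 \bmod \p^k$. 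We obtain $B_{\alpha+(n+1)r}B_{\alpha+(n-1)r}\equiv U B_{\alpha+nr}^2 \pmod{\p^k}$, with $U=B_{M+r}B_{M-r}/B_M^2$ (up to the sign convention $B_{-\beta}=-B_\beta$). Next I will check that $U$ is a $\p$-adic unit. The idea is that $B_{M\pm r}$ and $B_M$ share exactly the $\p$-part forced by $\p\mid B_M$, using strong divisibility and the valuation identity of \Cref{valuation} applied to points in $E_1(K_\p)$. Induction on $x$, using that every $B_{\alpha+nr}$ is a unit times $B_\alpha$, then gives $B_{\alpha+xr}\equiv U^{x(x-1)/2}(B_{\alpha+r}/B_\alpha)^xB_\alpha$. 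The analogous computation along $r\tau$ gives the $V$ factor.

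For the full two-dimensional statement, I would apply the line result along the direction $r\tau$ starting from base points $\alpha+xr$. This requires knowing $B_{\alpha+xr+r\tau}/B_{\alpha+xr}$. I will compute it with a mixed recurrence with indices $(\alpha+xr+ \tfrac{?}{}\,,r,r\tau)$. Concretely, I apply \eqref{CM recurrence} to $(\alpha+xr+r\tau\ldots)$ with two multiples of $M$, namely $r$ and $r\tau$, to derive the ratio relation
\[
\frac{B_{\alpha+(x+1)r+r\tau}B_{\alpha+xr}}{B_{\alpha+(x+1)r}B_{\alpha+xr+r\tau}}\equiv \frac{B_{\alpha+r+r\tau}B_\alpha}{B_{\alpha+r}B_{\alpha+r\tau}} \pmod{\p^k},
\]
so that this cross ratio is constant in $x$; this is the source of the $(\cdot)^{xy}$ factor. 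The main obstacle is exactly this step: choosing a recurrence instance that satisfies the "two multiples of $M$" restriction, kills the correct term modulo $\p^k$, and leaves only units after division. I would first try the indices $(\alpha+xr+r\tau/2\text{-free variant})$, i.e.\ $(\alpha+xr,\ r,\ r\tau)$ combined with the line relations already proven, and eliminate unknowns. Negative $x,y$ then follow by running the recurrences backwards, since all the quantities involved are units.
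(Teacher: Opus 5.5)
\textbf{There is a gap in the two-dimensional step.} Your treatment of the case $\p^k\mid B_\alpha$ and of the one-dimensional laws matches the paper: the recurrence with indices $\alpha+nr$, $r$, $M$, followed by induction on $n$. The problem is the step you flag as the main obstacle and leave open.

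The tool you propose for it does not work. In the instance $(\alpha+xr,\,r,\,r\tau)$ of \eqref{CM recurrence}, every term carries one of $B_{r\tau}^2$, $B_r^2$, $B_{r+r\tau}B_{r-r\tau}$. By weak divisibility ($r\mid r\tau$, $r\mid r\pm r\tau$), each of these is divisible by $\p^{2k}$. So reducing modulo $\p^k$ kills all three terms at once, rather than leaving a two-term multiplicative relation. The identity also brings in the row $\alpha+xr-r\tau$.

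\textbf{No new recurrence is needed.} Your $r$-direction line law holds for any base point $\beta$ with $\p^k\nmid B_\beta$. Strong divisibility gives $\nu_{\p}(B_{\alpha+yr\tau})=\nu_{\p}(B_\alpha)<k$ for every $y$, so the law applies along every row $\alpha+yr\tau+\Z r$. Comparing the rows $y=0$ and $y=1$ gives, as a congruence of $\p$-adic units,
\[
\frac{B_{\alpha+xr+r\tau}}{B_{\alpha+xr}}\equiv\left(\frac{B_{\alpha}B_{\alpha+(1+\tau)r}}{B_{\alpha+r}B_{\alpha+r\tau}}\right)^{x}\frac{B_{\alpha+r\tau}}{B_\alpha}.
\]
This is your cross-ratio relation, obtained for free. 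Then apply the $r\tau$-direction law along the column through $\alpha+xr$ and substitute. The exponents combine as $\bigl(U^{x(x-1)/2}\bigr)^{y}\bigl(U^{x(x-1)/2}\bigr)^{1-y}=U^{x(x-1)/2}$, and similarly for the other factors. This is exactly the paper's argument: it writes $a_{x+y\tau}\equiv V^{y(y-1)/2}a_{x+\tau}^{y}a_x^{1-y}$ and inserts the row laws for $a_{x+\tau}$ and $a_x$.

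\textbf{A smaller precision issue.} As written, you reduce the line recurrence modulo $\p^k$ and then divide by $B_{\alpha+nr}^2$. Its valuation $2\nu_{\p}(B_\alpha)$ may be positive. That only yields the unit relation modulo $\p^{k-2\nu_{\p}(B_\alpha)}$, and the final congruence modulo $\p^{k-\nu_{\p}(B_\alpha)}$, which is too weak.

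The fix is to keep the true size of the discarded term $B_{\alpha+nr+M}B_{\alpha+nr-M}(B_r/B_M)^2$, which is divisible by $\p^{2k}$. The unit relations then hold modulo $\p^{2k-2\nu_{\p}(B_\alpha)}$, a positive power since $\nu_{\p}(B_\alpha)<k$. Multiplying back by $B_\alpha$ gives the statement modulo $\p^{2k-\nu_{\p}(B_\alpha)}\subseteq\p^k$. The paper achieves the same by normalising $a_{x+y\tau}=B_{\alpha+(x+y\tau)r}/\Gcd(B_\alpha,B_r)$ and working modulo $B_r^2/\Gcd(B_\alpha B_M,B_r)^2$.

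For the unit property of $U$ and $V$, the clean argument is strong divisibility: $D_{M\pm r}+D_r=D_{M}$. Together with $\nu_{\p}(B_r)\ge k+\nu_{\p}(B_M)>\nu_{\p}(B_M)$, this forces $\nu_{\p}(B_{M\pm r})=\nu_{\p}(B_M)$.
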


\begin{proof}
Note that for any $x+y\tau \in \OO$, we have $((\alpha)+(x+y\tau)r)+(r)=(\alpha)+(r)$, so strong divisibility (\Cref{divisibility}) immediately implies that 
\begin{equation}\label{gcdd}
\gcd(B_{\alpha}, B_{r})=\gcd(B_{\alpha+(x+y\tau)r}, B_{r})
\end{equation}
for every $x+y\tau \in \OO$. 

Therefore, if $\p^k$ divides $B_{\alpha}$, then we are done, so assume not. 

It is obvious that $r \vert r\tau$, so for $a \equiv b \bmod r\tau$, we have $a \equiv b \bmod B_{r\tau}$ and hence $a\equiv b \bmod B_r$.

Using the recurrence relation and if we replace $m$ by $M$ and $n$ by $\alpha+(x+y\tau)r$, we obtain
\begin{equation}\label{RR1}
B_{M+r}B_{M-r}B_{\alpha+(x+y\tau)r}^2 \equiv B_{\alpha+(x+1+y\tau)r}B_{\alpha+(x-1+y\tau)r}B_{M}^2 \bmod B_{r}^2.
\end{equation}
Similarly, replacing $r$ by $r\tau$, $m$ by $M$ and $\alpha$ by $\alpha+(x+y\tau)$, we also obtain
\begin{equation}\label{RR2}
B_{M+r\tau}B_{M-r\tau}B_{\alpha+(x+y\tau)r}^2 \equiv B_{\alpha+(x+(1+y)\tau)r}B_{\alpha+(x+(y-1)\tau)r}B_{M}^2 \bmod B_{r}^2.
\end{equation}
Let 
\begin{equation}\label{a_l}
U=\frac{B_{M+r}B_{M-r}}{B_M^2}, V=\frac{B_{M+r\tau}B_{M-r\tau}}{B_M^2}, a_{x+y\tau}=\frac{B_{\alpha+(x+y\tau)r}}{\gcd(B_{\alpha}, B_{r})}.
\end{equation}

By weak divisibility (\Cref{divisibility}), the assumption of $r$ being a multiple of $M$ implies that $U$ and $V$ are both elements in $\OO_K$. Recall from (\ref{gcdd}) that $\gcd(B_{\alpha}, B_{r})=\gcd(B_{\alpha+(x+y\tau)r}, B_{r})$, so $a_{x+y\tau}$ is an element in $\OO_K$ coprime with $B_r/\gcd(B_{\alpha+(x+y\tau)r}, B_{r})$ and this allows us to divide (\ref{RR1}) by $B_{\alpha+(x+y\tau)r}^2B_{M}^2$, we get 
\[
U \equiv \frac{a_{x+1+y\tau}a_{x-1+y\tau}}{a_{x+y\tau}^2} \bmod \frac{B_r^2}{\gcd(B_{\alpha}B_{M}, B_r)^2},
\]
where every $a_{x+y\tau}$ is coprime to the modulus. If we now fix the value of $y$ and induct on $x$, then we obtain 
\[
a_{x+y\tau}\equiv U^{\frac{x(x-1)}{2}}a_{1+y\tau}^xa_{y\tau}^{1-x} \bmod \frac{B_r^2}{\gcd(B_{\alpha}B_{M}, B_r)^2}.
\]

We now do the same for (\ref{RR2}): divide both sides by $B_{\alpha+(x+y\tau)r}^2B_{M}^2$ to obtain
\[
V \equiv \frac{a_{x+(1+y)\tau}a_{x+(y-1)\tau}}{a_{x+y\tau}^2} \bmod \frac{B_r^2}{\gcd(B_{\alpha}B_{M}, B_r)^2},
\]
then we fix the value of $x$ and it follows from induction on $y$ that 
\begin{align*}
a_{x+y\tau}
&\equiv V^{\frac{y(y-1)}{2}} a_{x+\tau}^{y} a_{x}^{1-y} \\
&\equiv V^{\frac{y(y-1)}{2}}
\left(
U^{\frac{x(x-1)}{2}} a_{1+\tau}^{x} a_{\tau}^{1-x}
\right)^{y}
\left(
U^{\frac{x(x-1)}{2}} a_{1}^{x} a_{0}^{1-x}
\right)^{1-y} \\
&\equiv
U^{\frac{x(x-1)}{2}}
V^{\frac{y(y-1)}{2}}
\left(\frac{a_{1}}{a_{0}}\right)^{x}
\left(\frac{a_{\tau}}{a_{0}}\right)^{y}
\left(\frac{a_{1+\tau} a_{0}}{a_{1} a_{\tau}}\right)^{xy}
a_{0}
\bmod \frac{B_r^2}{\gcd(B_{\alpha} B_{M}, B_r)^2}.
\end{align*}
Multiplying both sides by $\Gcd(B_\alpha, B_r)$, we obtain 
\begin{align*}
a_{x+y\tau}\Gcd(B_{\alpha}, B_r)
&\equiv 
\Gcd(B_{\alpha}, B_r)C^{\frac{x(x-1)}{2}}
D^{\frac{y(y-1)}{2}} \\
& \quad \cdot
\left(\frac{a_{1}}{a_{0}}\right)^{x}
\left(\frac{a_{\tau}}{a_{0}}\right)^{y}
\left(\frac{a_{1+\tau} a_{0}}{a_{1} a_{\tau}}\right)^{xy}
a_{0}
\bmod \frac{\Gcd(B_{\alpha}, B_r)B_r^2}{\gcd(B_{\alpha} B_{M}, B_r)^2}.
\end{align*}

By assumption, $\p^k$ divides $B_{r}/\Gcd(B_r, B_M)$. Since $B_{r}/\Gcd(B_r, B_M)$ divides the modulus clearly, so as $\p^k$. Now apply the definition of $a_{x+y\tau}$ from (\ref{a_l}), then the first case in the proposition is proved. Furthermore, recall that $a_{x+y\tau}$ is coprime with $B_r/\gcd(B_{\alpha+(x+y\tau)r}, B_{r})$ and therefore coprime to the modulus as well. Therefore, $U$, $V$, $B_{\alpha +r}B_{\alpha}^{-1}=a_1/a_{0}^{-1}$, $B_{\alpha + r\tau}B_{\alpha}^{-1}=a_{\tau}a_0^{-1}$ and $B_{1+\tau}B_{\alpha}\left(B_{1}B_{\tau}\right)^{-1}a_{1+\tau}a_{0}a_{1}^{-1} a_{\tau}^{-1}$ are all $\p$--adic units. 
\end{proof}

\begin{prop}[Periodicity] \label{PID periodicity}
Suppose $K$ is a class number $1$ number field. Let $M$ be as in \Cref{CM EDS}, let $k \in \N$ and let $\p$ be a prime ideal in $O_K$. Let 
\[
r(\p^k)=M \Ord \left([M]P \bmod \p^{k+\nu_{\p}([M]P)}\right),
\]
and 
\[
\pi(\p^k)=\phi(\p^k)r(\p^k)=(\Nm(\p)-1)\Nm(\p^{k-1})r(\p^k),
\] 
where $\phi(\A)=\#(\OO_K/\A)^{\times}=\prod_{\p^k \mid\!\mid \A}(\Nm(\p)-1)\Nm(\p^{k-1})$ is the generalisation of the Euler totient function. If $x+y\tau \in \pi(\p^k)\OO$, or equivalently $\pi(\p^k)$ divides both $x$ and $y$, then we have 
\[
B_{\alpha + (x+y\tau)r(\p^k)}\equiv B_{\alpha} \bmod \p^k.
\]
In other words, the sequence $B_{\alpha} \bmod \p^k$ is periodic with period dividing $\pi(\p^k)$. 
\end{prop}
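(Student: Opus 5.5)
We plan to derive this from the Symmetry Law (\Cref{symmetry law}) applied with $r = r(\p^k)$. The first step is to verify its hypothesis, namely that $\p^k$ divides $B_r/\Gcd(B_r,B_M)$. Write $n = \Ord\left([M]P \bmod \p^{k+\nu_{\p}([M]P)}\right)$, so that $r = nM$ is a multiple of $M$. By definition of the order, $[n][M]P$ lies in $E_{k+\nu_\p([M]P)}(K_\p)$, so $\nu_\p([r]P) \geq k + \nu_\p([M]P)$. Since $M\in\mathfrak{M}(P)$, the point $[M]P$ has nonsingular reduction everywhere. Hence the $\p$-adic valuation of $D_{\beta}$ equals $\nu_\p([\beta]P)$ for multiples $\beta$ of $M$ (the $x$-coordinate has valuation $-2\nu_\p$). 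We then get $\nu_\p(B_r) - \nu_\p(B_M) \geq k$. By weak divisibility (\Cref{divisibility}), $B_M \mid B_r$, so $\Gcd(B_r,B_M)=B_M$ and the hypothesis holds. (If $\nu_\p([M]P)=0$, the same argument works with $E_k$ in place of $E_{k+0}$.)

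Next we apply the formula. If $\p^k \mid B_\alpha$, then by (\ref{gcdd}) we have $\gcd(B_{\alpha+(x+y\tau)r},B_r) = \gcd(B_\alpha,B_r)$, and $\p^k$ divides $B_r$. Hence $\p^k$ divides $B_{\alpha+(x+y\tau)r}$ as well, and both sides vanish mod $\p^k$. The second case of the Symmetry Law only records vanishing mod $\p$, so this gcd argument is what supplies the full congruence mod $\p^k$. Otherwise we use the first case: $B_{\alpha+(x+y\tau)r}$ is congruent mod $\p^k$ to $B_\alpha$ times a product of the $\p$-adic units $U$, $V$, $B_{\alpha+r}/B_\alpha$, $B_{\alpha+r\tau}/B_\alpha$ and the cross ratio, raised to the powers $x(x-1)/2$, $y(y-1)/2$, $x$, $y$, $xy$. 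These units lie in $(\OO_K/\p^k)^\times$, a group of order $\phi(\p^k)$, so each exponent only matters modulo $\phi(\p^k)$.

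The main obstacle is the exponents $x(x-1)/2$ and $y(y-1)/2$. If $\phi(\p^k) \mid x$, then $x(x-1)/2$ is divisible by $\phi(\p^k)$ only when $x/\phi(\p^k)$ or $x-1$ supplies the missing factor of $2$. This fails when $\phi(\p^k)$ is even and $x/\phi(\p^k)$ is odd. Here we use the stronger hypothesis: $\pi(\p^k) = \phi(\p^k) r(\p^k)$ divides $x$ and $y$, and the index $(x+y\tau)r$ with $x,y \in \pi(\p^k)\Z$ can be regrouped. We plan to handle this in one of two ways. The first option is to apply the Symmetry Law with $r' = \phi(\p^k) r$, which is still a multiple of $M$ satisfying the hypothesis, and with the new parameters $x' = x/\phi(\p^k)$, $y' = y/\phi(\p^k)$. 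The second option is to show directly that $U$ and $V$ are squares of units modulo $\p^k$, or that $U^{\phi/2}=1$; this can be checked using $U \equiv a_{x+1}a_{x-1}/a_x^2$ and the periodicity $a_{x+\phi} \equiv a_x$ in one variable. The first option looks cleanest. Since $r(\p^k)$ contains the factor $M$, we expect $\phi(\p^k) r(\p^k)\cdot x''$ to absorb the parity issue, because $r(\p^k)$ is even, or one can reduce to the two-step case $x=2\phi$, where $x(x-1)/2 = \phi(2\phi-1)$ is divisible by $\phi$. Concretely, it suffices to prove invariance under the translations $\alpha \mapsto \alpha + \pi(\p^k) r$ and $\alpha \mapsto \alpha + \pi(\p^k)\tau r$ separately, and then iterate. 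For a single translation with $x=\pi(\p^k)=\phi(\p^k) r(\p^k)$, the exponent $x(x-1)/2$ is a multiple of $\phi(\p^k)$ provided $r(\p^k)(x-1)$ is even. We will ensure this by arguing that $2\mid r(\p^k)$ or that $x-1$ is even; failing both, we fall back on the square-unit argument.

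Finally, once every exponent is a multiple of $\phi(\p^k)$, Euler's theorem in $(\OO_K/\p^k)^\times$ gives $B_{\alpha+(x+y\tau)r(\p^k)} \equiv B_\alpha \bmod \p^k$. This yields the claimed periodicity with period dividing $\pi(\p^k)$.
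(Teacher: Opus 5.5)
Your route is the paper's: check that $r=r(\p^k)$ satisfies the hypothesis of the Symmetry Law, then remove the unit factors with Euler's theorem in $(\OO_K/\p^k)^\times$. Your handling of the case $\p^k\mid B_\alpha$ via (\ref{gcdd}) is correct, and it supplies a step the paper leaves out.

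There is a genuine gap, and it is the one you flag yourself but do not close. Suppose $\phi=\phi(\p^k)$ is even and $r$ is an odd integer. Take $y=0$, $x=\pi(\p^k)=\phi r$, and any $\alpha$ with $\p\nmid B_\alpha$. Then $x(x-1)/2=(\phi/2)\cdot r(\phi r-1)$ with $r(\phi r-1)$ odd, so the Symmetry Law gives $B_{\alpha+xr}\equiv U^{\phi/2}B_\alpha\pmod{\p^k}$. The desired congruence is therefore \emph{equivalent} to $U^{\phi/2}\equiv 1\pmod{\p^k}$, and likewise $V^{\phi/2}\equiv1$. No regrouping of the shift can avoid proving this. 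The case really occurs: for $y^2=x^3-2x$, $P=(-1,1)$ over $\Q(i)$, $\p=(2+i)$ and $k=1$, one has $M=1$, $r=5$ and $\phi=4$. For $x=20$ the exponent of $U$ is $190\equiv 2\pmod 4$.

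None of your remedies supplies $U^{\phi/2}\equiv1$:
\begin{itemize}
\item With $r'=\phi r$, the original law gives $B_{\alpha+r'}/B_\alpha\equiv U^{\phi(\phi-1)/2}\equiv U^{-\phi/2}$. This factor then reappears raised to the possibly odd power $x'=x/\phi$.
\item The assertion that $r(\p^k)$ is even fails in the example above.
\item The case $x=2\phi$ only gives invariance under shifts by $2\phi r$. The hypothesis $\phi r\mid x$ does not provide this when $r$ is odd.
\item Deducing $U^{\phi/2}\equiv1$ from the one-variable periodicity $a_{x+\phi}\equiv a_x$ is circular. By the Symmetry Law that periodicity is itself equivalent to $U^{\phi(\phi-1)/2}\equiv1$, i.e.\ to $U^{\phi/2}\equiv1$.
\end{itemize}

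To close the argument you need one of two things.
\begin{itemize}
\item An independent proof that $U$ and $V$ are squares of units modulo $\p^k$. For instance, take $M=1$, $\p\nmid D_1$ and the normalisation $B_\alpha=\psi_\alpha$. Then (\ref{expression2}) and the oddness of $\psi$ give
\[
U=-\psi_{r+1}\psi_{r-1}=\bigl(x([r]P)-x(P)\bigr)\psi_r^2\equiv(\psi_r/t)^2\pmod{\p^k},
\]
where $t$ is the formal parameter at $[r]P$. The general case needs this transferred to $[M]P$ and to the chosen normalisation of the $B_\alpha$.
\item A weaker conclusion that requires $2\phi(\p^k)\mid x,y$, for which Euler's theorem alone suffices.
\end{itemize}

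The paper's own proof does not resolve this either. It asserts $u^{x(x-1)/2}\equiv1$ directly from divisibility of $x$ (written there as $\Nm(\p^k)\mid x$), which is exactly the implication you rightly doubted.
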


\begin{proof}
Write $r=r(\p^k)$ throughout the proof. Suppose $\p^k$ does not divide $B_\alpha$. By assumption, we have 
\[
[r]P \equiv O \bmod \p^{k+\nu_\p(MP)}
\]
and therefore $k+\nu_{\p}(MP)\leq \nu_{\p}(rP)$. For all prime ideals $\p$, we have 
\begin{align*}
\nu_\p(B_r)
&= \nu_{\p}(rP) \\
&\geq k + \nu_\p(MP) \\
&= k+\nu_{\p}(B_M).
\end{align*}
Therefore,
\[
\p^k \mid \frac{B_r}{\Gcd(B_M,B_r)} 
\quad \text{and} \quad 
M \mid r.
\]

Recall from \Cref{symmetry law} that if $\p^k \nmid B_{\alpha}$, then for every 
$x+y\tau \in \End(E)$,
\[
B_{\alpha+(x+y\tau)r}
\equiv 
U^{\frac{x(x-1)}{2}}
V^{\frac{y(y-1)}{2}}
\left(\frac{B_{\alpha+r}}{B_\alpha}\right)^{x}
\left(\frac{B_{\alpha+r\tau}}{B_\alpha}\right)^{y}
\left(\frac{B_{\alpha}B_{\alpha+(1+\tau)r}}{B_{\alpha+r}B_{\alpha+r\tau}}\right)^{xy}
B_{\alpha}
\bmod{\p^k},
\]
where 
\[
U=\frac{B_{M+r}B_{M-r}}{B_M^2}, 
\qquad 
V=\frac{B_{M+r\tau}B_{M-r\tau}}{B_M^2}.
\]

Note that \Cref{symmetry law} also showed that $C$ and $D$ are $\p$--adic units. 
In particular, by strong divisibility,
\[
\Gcd(B_{\alpha}, \p^k)
= \Gcd(B_{\alpha+(x+y\tau)r}, \p^k)
= \Gcd(B_{\alpha}, B_r, \p^k).
\]
Now
\[
\#(\OO_{K}/\p^k)=\phi(\p^k)=(\Nm(\p)-1)\Nm(\p^{k-1}),
\]
so for any $u \in (\OO_{K}/\p^k)^{\times}$, if $\Nm(\p^k)\mid x$ and $\Nm(\p^k)\mid y$, then
\[
u^x \equiv 
u^y \equiv 
u^{\frac{x(x-1)}{2}} \equiv 
u^{\frac{y(y-1)}{2}} \equiv 
u^{xy} \equiv 1 
\bmod{\p^k}.
\]

Thus by definition of $\pi(\p^k)$, if $\pi(\p^k)$ divides $(x+y\tau)r$, then 
\[
B_{\alpha + (x+y\tau)r}\equiv B_{\alpha} \bmod \p^k.
\]
\qedhere
\end{proof}
\vspace{-0.5em}
\subsubsection{Imaginary fields of class number greater than $1$}
In this case, we will first need a lemma that allows us to integrate the recurrence relation (\ref{CM recurrence}) into the ray class group framework. Throughout this section, we assume $\m$ to be a modulus divisible by the conductor $\mathfrak{f}_{L/K}$ of the extension $L/K$, so we have $\Gal(L/K)\cong I_K^{\m}/T_{L/K}^{\m}$. To show that different fractional ideals represent the same class in $I_K^{\m}/T_{L/K}^{\m}$, we use the notation $I\sim J$ whenever $[I]=[J]$ for fractional ideals $I, J \in I_K^{\m}$. 

\begin{lemma}\label{ray class}
Let $A, B, C$ be elements in a number field $K$ that satisfy $A=B+C$. Let $\m$ be an integral ideal in $\OO_K$ and consider the group $I_K^\m / T_{L/K}^{\m}$. Suppose $\g$ is a fractional ideal in $K$ such that $B\g$ is an ideal contained in $\m$ and at least one of $A\g$ and $C\g$ is relatively prime to $\m$ , then $A\g \sim C\g$. 
\end{lemma}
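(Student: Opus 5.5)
We have $A\g = B\g + C\g$ as elements times $\g$. The idea is to write $A\g$ and $C\g$ as the principal ideal $(A/C)$ times $C\g$, and then show that $A/C$ is congruent to $1$ modulo $\m$ in the multiplicative sense. Its principal ideal then lies in $P_K^{\m}\subseteq T_{L/K}^{\m}$.

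First I reduce to the case where both $A\g$ and $C\g$ are coprime to $\m$. Say $C\g$ is coprime to $\m$ (the other case is symmetric). Then $C\ne 0$.

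Take any prime $\p\mid\m$ with $e=\nu_\p(\m)\ge 1$. The hypothesis $B\g\subseteq\m$ gives $\nu_\p(B)+\nu_\p(\g)\ge e\ge 1$. Coprimality gives $\nu_\p(C)+\nu_\p(\g)=0$. So $\nu_\p(B)>\nu_\p(C)$, and by the ultrametric inequality $\nu_\p(A)=\nu_\p(C)$. Hence $\nu_\p(A\g)=0$ as well, and $A\ne0$.

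This also shows that $A\g$ and $C\g$ are genuinely integral away from $\m$ only if $\g$ makes them so. That does not matter, since we only need them to be fractional ideals in $I_K^{\m}$. I will assume the lemma intends $A\g, C\g$ to be fractional ideals coprime to $\m$, which is what we have just established.

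Next write $A/C = 1 + B/C$. For each $\p\mid\m$,
\[
\nu_\p(B/C)=\nu_\p(B\g)-\nu_\p(C\g)=\nu_\p(B\g)\ge \nu_\p(\m).
\]
So $A/C\equiv 1 \bmod^{\times} \m$ in the usual sense: $A/C$ is a $\p$-adic unit and $A/C-1\in\p^{\nu_\p(\m)}\OO_{K_\p}$ for every $\p\mid\m$.

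Since $K$ is totally imaginary, there are no infinite places to impose sign conditions. Thus the principal fractional ideal $(A/C)$ lies in $P_K^{\m}$. This uses the standard convention that $P_K^{\m}$ consists of principal ideals generated by elements $\equiv1 \bmod^\times\m$; for fractional generators this is the natural meaning of $\alpha\equiv1\bmod\m$ in the paper's definition.

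Finally, $A\g=(A/C)\cdot C\g$ with $(A/C)\in P_K^{\m}\subseteq T_{L/K}^{\m}$. Therefore $[A\g]=[C\g]$ in $I_K^{\m}/T_{L/K}^{\m}$, i.e.\ $A\g\sim C\g$.

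The only delicate point is bookkeeping of fractional versus integral objects. In particular, "$\equiv 1\bmod\m$" must be interpreted multiplicatively, via local valuations, for the non-integral element $A/C$. I would state that interpretation explicitly at the start.
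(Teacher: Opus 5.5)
Your proof is correct and follows the paper's argument: write $A/C = 1 + B/C$, use $\nu_{\mathfrak{p}}(BC^{-1}) = \nu_{\mathfrak{p}}(B\mathfrak{g}) - \nu_{\mathfrak{p}}(C\mathfrak{g}) \geq \nu_{\mathfrak{p}}(\mathfrak{m})$ for each $\mathfrak{p} \mid \mathfrak{m}$, and conclude $(A/C) \in P_K^{\mathfrak{m}} \subseteq T_{L/K}^{\mathfrak{m}}$. You also check, via the ultrametric inequality, that $A\mathfrak{g}$ is then coprime to $\mathfrak{m}$ as well, so both ideals lie in $I_K^{\mathfrak{m}}$; the paper leaves this point implicit, and it is worth keeping.
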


\begin{proof}
We were given $A=B+C$, so without loss of generality assume $C\neq 0$ and divide $C$ on both sides to get 
\[
\frac{A}{C}=\frac{B+C}{C}=\frac{B}{C}+1.
\]
Since $B\g \subseteq \m$, equivalently $\nu_{\p}(B\g)\geq \nu_{\p}(\m)>0$ for every prime $\p \mid \m$. We also assumed that at least one of $A\g$ and $C\g$ is relatively prime to $\m$; without loss of generality take $\nu_{\p}(C\g)=0$, so we may deduce that for every $\p\mid\m$,
\[
\nu_{\p}\left(BC^{-1}\right)=\nu_{\p}(B\g)-\nu_{\p}(C\g) \geq \nu_{\p}(\m) \implies BC^{-1} \equiv 0 \bmod \m. 
\]
Note that $(A\g)(C\g)^{-1}=(AC^{-1})$ as ideals, therefore it follows that $AC^{-1}=A\g(C\g)^{-1}=(\alpha)$ with $\alpha \equiv 1 \bmod \m$. Equivalently, $AC^{-1} \in P_{K}^{\m}$, hence $A\g$ and $C\g$ represent the same ray class in $I_K^{\m}/P_K^{\m}$. Since we have assumed that $\mathfrak{f}_{L/K}\mid\m$, we have $P_K^{\m}\subseteq T_{L/K}^{\m}$ and therefore $A\g \sim C\g$. 
\end{proof}

\begin{prop}\label{Ray class symmetry}
Let $K$ be a number field of class number greater than $1$. Let $M$ be as in \Cref{CM EDS}. Let $\alpha, r \in \OO$ with $r$ being a multiple of $M$. Let $\p$ be a prime in $\OO_K$ and $k \in \N$; denote $\m = \p^k$ and consider the group $I_{K}^{\m}/T_{L/K}^{\m}$. Suppose $\p^k$ is a factor of the integral ideal $D_{r}\left(D_r+D_M\right)^{-1}$ and $\p + D_{\alpha}=\OO_K$, then for all $x+y\tau \in \OO$, 
\begin{equation}\label{ray class congruence}
\begin{split}
D_{\alpha+(x+y\tau)r} &\sim 
U^{\frac{x(x-1)}{2}}
V^{\frac{y(y-1)}{2}}
\left(D_{\alpha+r}D_{\alpha}^{-1}\right)^{x} \\
&\quad \cdot \left(D_{\alpha+r\tau}D_{\alpha}^{-1}\right)^{y}
\left(D_{\alpha+(1+r)\tau}D_{\alpha}
\left(D_{\alpha+r}D_{\alpha+r\tau}\right)^{-1}\right)^{xy} \\
&\quad \cdot D_{\alpha}.
\end{split}
\end{equation}
\end{prop}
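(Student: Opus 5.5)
The plan is to mimic the proof of \Cref{symmetry law}, replacing congruences modulo $\p^k$ by equalities of classes in $I_K^{\m}/T_{L/K}^{\m}$ via \Cref{ray class}.

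\textbf{Step 1: the basic relation.} By \Cref{CM EDS}(2), since $r$ and $M$ are multiples of $M$, write $D_\beta=\psi_\beta\g_\beta^{-1}$. Take the recurrence with $(\alpha,\beta,\gamma)$ replaced by $(\alpha+(x+y\tau)r,\ M,\ r)$, and separately with $(\alpha+(x+y\tau)r,\ M,\ r\tau)$. This gives an identity of the form $A=B+C$ in $K$, with
\[
A=\psi_{M+r}\psi_{M-r}\psi_{\alpha+(x+y\tau)r}^2, \qquad C=\psi_{\alpha+(x+1+y\tau)r}\psi_{\alpha+(x-1+y\tau)r}\psi_M^2 ,
\]
up to sign and arrangement, and with $B$ containing the factor $\psi_r^2$. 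Multiply by the common ideal $\g$ furnished by the $\g$-relation in \Cref{CM EDS}(2); this turns each product of $\psi$'s into the corresponding product of $D$'s.

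\textbf{Step 2: verify the hypotheses of \Cref{ray class}.} The ideal $B\g$ contains $D_r^2$ times other integral factors. Since $\p^k\mid D_r(D_r+D_M)^{-1}$, we get $B\g\subseteq\m$ after accounting for the $D_M^2$ factor in $A\g$ and $C\g$. One must check that $\p$ divides neither $D_M$ nor $D_{M\pm r}$ to the relevant power. For the index $\alpha+(x+y\tau)r$, strong divisibility (\Cref{divisibility}) gives $D_{\alpha+(x+y\tau)r}+D_r=D_\alpha+D_r$. Because $\p+D_\alpha=\OO_K$ and $\p\mid D_r$, every $D_{\alpha+(x+y\tau)r}$ is coprime to $\p$. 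Hence $C\g$ is coprime to $\m$ after cancelling the $D_M^2$ factor. That factor appears on both sides, so it can be divided out inside the class group once $\p\nmid D_M$ is checked; alternatively one works with the modulus as in \Cref{symmetry law}, which divides by $\Gcd(B_\alpha B_M,B_r)$. \Cref{ray class} then yields
\[
U\,D_{\alpha+(x+y\tau)r}^2\sim D_{\alpha+(x+1+y\tau)r}D_{\alpha+(x-1+y\tau)r},
\]
together with the analogous relation involving $V$, where $U=D_{M+r}D_{M-r}D_M^{-2}$ and $V$ is defined similarly. These $U,V$ are coprime to $\p$, so they define classes.

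\textbf{Step 3: solve the recurrence.} The ray class group is abelian, so the recurrence can be solved exactly as in \Cref{symmetry law}. Induct on $x$ with $y$ fixed, then on $y$ with $x$ fixed, including negative indices, and combine the results to get (\ref{ray class congruence}). The exponent $\tfrac{x(x-1)}{2}$ arises from the second-order recurrence $a_{x+1}a_{x-1}=Ua_x^2$.

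The \textbf{main obstacle} is Step 2: arranging $\g$ and the $\p$-adic valuations of $D_M$, $D_{M\pm r}$ so that $B\g\subseteq\m$ holds while one side stays coprime to $\m$. This is where the hypothesis on $D_r(D_r+D_M)^{-1}$ is used. I would first try to reduce to $\p\nmid D_M$; if that is not available, I would divide the identity $A=B+C$ by a generator-free local factor $\p^{\nu_\p(D_M^2)}$ before applying the lemma.
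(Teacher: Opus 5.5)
Your proposal is correct and follows the paper's proof: the recurrence from \Cref{CM EDS}(2), transfer to ideals via the $\g$-relation, \Cref{ray class} with coprimality of $D_{\alpha+(x+y\tau)r}$ to $\p$ coming from strong divisibility, and then solving the two second-order recurrences in the abelian group $I_K^{\m}/T_{L/K}^{\m}$. The paper avoids the Step 2 obstacle by dividing the recurrence by $\psi_M^2\psi_{\alpha+(x+y\tau)r}^2$ before multiplying by $(\g_{M+r}\g_{M-r})^{-1}\g_M^2$. Then $A\g=D_{M+r}D_{M-r}D_M^{-2}$ and $C\g=D_{\alpha+(x+1+y\tau)r}D_{\alpha+(x-1+y\tau)r}D_{\alpha+(x+y\tau)r}^{-2}$, the latter is coprime to $\p$ by your strong-divisibility argument, and $\nu_\p(B\g)\ge 2(\nu_\p(D_r)-\nu_\p(D_M))\ge 2k$, so no case distinction on whether $\p\mid D_M$ is needed; your local-factor fix is equivalent.
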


\begin{proof}
For number fields $K$ with class number greater than $1$, \Cref{CM EDS} tells us that there is a sequence of fractional ideals $\{\mathfrak{g}_{\alpha}\}_{\alpha \in \OO}$ in $\OO_K$ with 
\begin{align}
D_{\alpha}&=\psi_{\alpha}\mathfrak{g}_{\alpha}^{-1}, \\
\psi_{\alpha+\beta}\psi_{\alpha-\beta}\psi_{\gamma}^2 &= \psi_{\alpha+\gamma}\psi_{\alpha-\gamma}\psi_{\beta}^2 - \psi_{\beta+\gamma}\psi_{\beta-\gamma}\psi_{\alpha}^2, \\
\mathfrak{g}_{\alpha+\beta}\mathfrak{g}_{\alpha-\beta}\mathfrak{g}_{\gamma}^2 &= \mathfrak{g}_{\alpha+\gamma}\mathfrak{g}_{\alpha-\gamma}\mathfrak{g}_{\beta}^2 = \mathfrak{g}_{\beta+\gamma}\mathfrak{g}_{\beta-\gamma}\mathfrak{g}_{\alpha}^2.
\end{align}

As in the case of $\Cl(K)=1$, we use the recurrence relation and replace the indices $\beta$, $\gamma$ by $M$ and $\alpha + (x+y\tau)r$ to obtain 
\begin{equation}\label{NON_PID_RR}
\psi_{M+r}\psi_{M-r}\psi_{\alpha+(x+y\omega)r}^2 = \psi_{\alpha+(x+1+y\omega)r}\psi_{n+(x-1+y\omega)r}\psi_{M}^2 + \psi_{\alpha+(x+y\omega)r+M}\psi_{\alpha+(x+y\omega)r-M}\psi_r^2.
\end{equation}

Next, as elements in $K$, divide both sides by $\psi_M^2\psi_{\alpha+(x+y\omega)r}^2$ to obtain 
\begin{align*}
\psi_{M+r}\psi_{M-r}\psi_{M}^{-2} 
&= \psi_{\alpha+(x+1+y\omega)r}\psi_{\alpha+(x-1+y\omega)r}\psi_{\alpha+(x+y\omega)r}^{-2} \\
& + \psi_{\alpha+(x+y\omega)r+M}\psi_{\alpha+(x+y\omega)r-M}\frac{\psi_r^2}{\psi_M^2}\psi_{\alpha+(x+y\omega)r}^{-2}.
\end{align*}

By \Cref{CM EDS}, we also have a sequence of fractional ideals that satisfy 
\begin{align*}
\mathfrak{g}_{M+r}\mathfrak{g}_{M-r}\mathfrak{g}_{M}^{-2} 
&= \mathfrak{g}_{\alpha+(x+y\omega)r+M}\mathfrak{g}_{\alpha+(x+y\omega)r-M}\g_{r}^2\left(\mathfrak{g}_{M}\g_{\alpha+(x+y\omega)r}\right)^{-2} \\
&=\g_{\alpha+(x+1+y\omega)r}\g_{n+(x-1+y\omega)r}\g_{\alpha+(x+y\omega)r}^{-2}.
\end{align*}

Now we wish to apply \Cref{ray class}, then we can multiply $\left(\mathfrak{g}_{M+r}\mathfrak{g}_{M-r}\right)^{-1}\mathfrak{g}_{M}^{2}$ on both sides of (\ref{NON_PID_RR}) and yield the following congruence:
\begin{equation}\label{DA}
D_{M+r}D_{M-r}D_{M}^{-2}\sim D_{\alpha+(x+1+y\omega)r}D_{\alpha+(x-1+y\omega)r}D_{\alpha+(x+y\omega)r}^{-2}.
\end{equation}

To do so, we need to show that $D_{M+r}D_{M-r}D_{M}^{-2}$ and $D_{\alpha+(x+1+y\omega)r}D_{\alpha+(x-1+y\omega)r}D_{\alpha+(x+y\omega)r}^{-2}$ are ideals coprime to $\m$. Indeed, by strong divisibility of $\left(D_{\alpha}\right)_{\alpha \in \OO}$ and the fact that $D_{\alpha}=D_{\alpha\OO}$ for all $\alpha \in \OO$, we know that for all $\gamma \in \OO$, 
\[
D_{\alpha+\gamma r}+D_{r}=D_{(\alpha+\gamma r)+(r)}=D_{(\alpha)+(r)}=D_{\alpha}+D_{r}.
\]
We also assumed that $\p$ is not a prime ideal factor of $D_{\alpha}$. Therefore, $D_{\alpha+\gamma r}$ is an integral ideal coprime to $\p$ for all $\gamma \in \OO$ and hence (\ref{DA}) follows from \Cref{ray class} and (\ref{NON_PID_RR}). 

Repeating the same argument, we also obtain 
\[
D_{M+r\tau}D_{M-r\tau}D_{M}^{-2} \sim D_{\alpha+(x+(y+1)\tau)r}D_{\alpha+(x+(y-1)\tau)r}D_{\alpha+(x+y\tau)r}^{-2}. 
\]

Similar to the case of $Cl(K)=1$, we set 
\begin{equation}
U=\frac{D_{M+r}D_{M-r}}{D_M^2}, V=\frac{D_{M+r\omega}D_{M-r\omega}}{D_M^2}, a_{x+y\omega}=\frac{D_{\alpha+(x+y\omega)r}}{D_{\alpha}+D_{r}}.
\end{equation}
Apply the induction argument, we obtain 
\[
a_{x+y\omega} \sim
U^{\frac{x(x-1)}{2}}
V^{\frac{y(y-1)}{2}}
\left(a_1a_0^{-1}\right)^{x}
\left(a_{\omega}a_{0}^{-1}\right)^{y}
\left(a_{1+\omega}a_{0}\left(a_{1}a_{\omega}\right)^{-1}\right)^{xy}
a_{0}.
\]

Since $D_{\alpha}+D_{r}$ is coprime to the modulus $\m=\p^k$, $[D_{\alpha}+D_{r}]$ is a well--defined ray class and we can multiply it on both sides to conclude the proof. 
\end{proof}

\begin{cor}[Periodicity]\label{Non PID periodicity}
Suppose $K$ is a number field of class number greater than $1$. Let $M$ be as in \Cref{CM EDS}, let $k \in \N$ and let $\p$ be a prime ideal in $O_K$. Let 
\[
r(\p^k)=M \Ord \left([M]P \bmod \p^{k+\nu_{\p}([M]P)}\right).
\]
Assume $D_{\alpha}$ is coprime to the prime ideal $\p$ and consider the group $I_{K}^{\m}/T_{L/K}^{\m}$, where $\m$ is the ideal $\p^k$. For every $\beta \in \OO$, if $\beta \equiv \alpha \bmod \pi(\p^k)$, where 
\[
\pi(\p^k)=r(\p^k)h^{\m}(K), \text{ $h^{\m}(K)$ is the ray class number,}
\] 
then the denominator ideals satisfy $D_{\beta}\sim D_{\alpha}$. %In other words the sequence of ideals $D_{\alpha}$ is periodic in the group $I_K^{\m}/T_{L/K}^{\m}$ with period dividing a multiple of the ray class number. 
\end{cor}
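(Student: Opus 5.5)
The plan follows the proof of \Cref{PID periodicity}, with \Cref{Ray class symmetry} in place of \Cref{symmetry law} and the finite group $I_K^{\m}/T_{L/K}^{\m}$ in place of $(\OO_K/\p^k)^{\times}$. Write $r=r(\p^k)$ and $\m=\p^k$, and let $\beta=\alpha+(x+y\tau)\pi(\p^k)$ with $x,y\in\Z$. Note that $\pi(\p^k)=h^{\m}(K)r$ is a rational integer multiple of $r$. Hence $\beta=\alpha+(x'+y'\tau)r$ with $x'=h^{\m}(K)x$ and $y'=h^{\m}(K)y$, and both $x'$ and $y'$ are divisible by $h^{\m}(K)$.

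\textbf{Step 1: the hypotheses of \Cref{Ray class symmetry} hold.} We have $M\mid r$ by definition. By construction $[r]P\equiv O \bmod \p^{k+\nu_\p([M]P)}$, so $\nu_\p(D_r)=\nu_\p([r]P)\geq k+\nu_\p([M]P)=k+\nu_\p(D_M)$. This is exactly the computation in the proof of \Cref{PID periodicity}, now phrased with valuations of ideals. It gives $\p^k \mid D_r(D_r+D_M)^{-1}$. Together with the assumption $\p+D_\alpha=\OO_K$, all hypotheses of \Cref{Ray class symmetry} are satisfied.

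\textbf{Step 2: apply \Cref{Ray class symmetry} with $(x',y')$.} This expresses the class of $D_\beta$ as $D_\alpha$ times the classes $U$, $V$, $D_{\alpha+r}D_\alpha^{-1}$, $D_{\alpha+r\tau}D_\alpha^{-1}$ and the cross term, raised to the exponents $x'(x'-1)/2$, $y'(y'-1)/2$, $x'$, $y'$ and $x'y'$. For this to make sense, each base must be an ideal coprime to $\m$ and so define a class in $I_K^{\m}/T_{L/K}^{\m}$. For the $D_{\alpha+\gamma r}$ this follows from strong divisibility: $D_{\alpha+\gamma r}+D_r=D_\alpha+D_r$, which is coprime to $\p$. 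For $U$ and $V$ it follows from the coprimality already used to derive (\ref{DA}).

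\textbf{Step 3: the exponents kill every factor.} The group $I_K^{\m}/T_{L/K}^{\m}$ is a quotient of $Cl_K^{\m}$, which has order $h^{\m}(K)$. So every element $u$ satisfies $u^{h^{\m}(K)}=1$. The exponents $x'$, $y'$ and $x'y'$ are all divisible by $h^{\m}(K)$, so those factors vanish.

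The main obstacle is the quadratic exponents $x'(x'-1)/2$ and $y'(y'-1)/2$. If $h^{\m}(K)$ is even, $h^{\m}(K)\mid x'$ gives only $\tfrac{h^{\m}(K)}{2}\mid x'(x'-1)/2$. The same gap occurs in \Cref{PID periodicity}, which I would resolve in the same way. First I would check whether $U$ and $V$ are squares or have order dividing $h^{\m}(K)/2$; this would follow if the relation (\ref{DA}) at $x=0$ could be read as $U\sim a_1a_{-1}a_0^{-2}$ with $a_{-1}$ tied to $a_1$ by the symmetry $D_{-\gamma}=D_\gamma$, though I am not sure this holds. Failing that, I would interpret $\pi(\p^k)$ as $2h^{\m}(K)r$, or restrict to $x,y$ even. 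Then $2h^{\m}(K)\mid x'$ and hence $h^{\m}(K)\mid x'(x'-1)/2$, and similarly for $y'$. With all factors trivial, $D_\beta\sim D_\alpha$ follows.
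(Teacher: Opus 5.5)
\textbf{Verdict.} There is a genuine gap in your proposal, and the paper's own proof has the same gap.

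\textbf{Comparison with the paper.} Your Steps 1--3 are exactly the paper's proof. The paper checks $\p^k\mid D_r(D_r+D_M)^{-1}$ by the computation from \Cref{PID periodicity}, applies \Cref{Ray class symmetry}, and uses that $I_K^{\m}/T_{L/K}^{\m}$ is a quotient of $\Cl_K^{\m}$. It then declares the conclusion immediate once $h^{\m}(K)\mid x,y$, and never examines the quadratic exponents. Your objection to that step is correct, and it applies equally to the analogous line of \Cref{PID periodicity}, where $\phi(\p^k)$ is usually even.

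\textbf{Why the step fails.} Write $h=h^{\m}(K)$ and take $x=h$, $y=0$. Then (\ref{ray class congruence}) reads $D_{\alpha+hr}\sim U^{h(h-1)/2}D_\alpha$. For even $h$ this is $U^{h/2}D_\alpha$, because $U^{h}\sim 1$. So the case $\beta=\alpha+\pi(\p^k)$ of the corollary is equivalent to $U^{h/2}\sim 1$, and nothing in the argument provides this. Even $h$ does occur: the paper finds $h^{\m}=4$ in \Cref{sieve example 2}.

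\textbf{Your two repairs.}
\begin{itemize}
\item The fallback is correct but proves a weaker statement. If $2h\mid x',y'$, then all five exponents are divisible by $h$, so $D_\beta\sim D_\alpha$ whenever $\beta\equiv\alpha\bmod 2h^{\m}(K)r(\p^k)$. The later sections only need some period that is $O_{E,L}(1)$, so they survive after redefining $\pi$, with $\omega$ changing accordingly.
\item The squareness idea does not work as stated. The relation $U\sim a_1a_{-1}a_0^{-2}$ is just the $x=y=0$ instance of the recurrence. The symmetry $D_{-\gamma}=D_\gamma$ identifies $a_{-1}$ for base $\alpha$ with $a_1$ for base $-\alpha$, not with anything for base $\alpha$, so no square root of $U$ comes out.
\end{itemize}

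\textbf{What the stated period would need.} You would need an analogue of Ward's form $W_{nr+k}\equiv W_k a^{nk}b^{n^2}$. Concretely: a proof that the class of $D_{\alpha+r}D_\alpha^{-1}$ has the shape $a(\alpha)b$ with $a$ additive in $\alpha$, and similarly with $r\tau$ in place of $r$. Applying this at bases $\alpha$ and $-\alpha$ gives $U\sim b^2$, hence $U^{h/2}\sim 1$, and likewise for $V$. Neither you nor the paper supplies this.

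\textbf{A smaller point in Step 2.} You attribute the coprimality of $U$ and $V$ to $\p$ to the derivation of (\ref{DA}), but the paper only checks the terms $D_{\alpha+\gamma r}$ there. The correct reason is that $\nu_\p(D_r)>\nu_\p(D_M)$, together with $D_{M\pm r}+D_r=D_{M\pm r\tau}+D_r=D_M+D_r$, forces $\nu_\p(D_{M\pm r})=\nu_\p(D_{M\pm r\tau})=\nu_\p(D_M)$.
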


\begin{remark}
In \Cref{CM EDS}, we stated that $M$is an element in $\OO$, while $h^{\m}(K)$ is clearly a natural number. Hence, $\pi(\p^k)$ is an element in $\OO$ for any $\OO_K$--prime $\p$. 
\end{remark}

\begin{proof}
We first need to show that $\p^k$ does divide the integral ideal $D_r(D_r+D_M)^{-1}$, whose proof is the exact same as the one in \Cref{PID periodicity}. Since $I_K^{\m}/T_{L/K}^{\m}$ is just a quotient of the ray class group $\Cl_{K}^{\m}$, \Cref{Ray class symmetry} or (\ref{ray class congruence}) immediately tells us that for any $x+y\tau \in \OO$, if both $x$ and $y$ are divisible by $h^{\m}(K)$ (so $(x+y\tau)r(\p^k)$ is divisible by $\pi(\p^k)$), then we obtain 
\[
D_{\alpha + (x+y\tau)r(\p^k)}\sim D_{\alpha}.
\]
Write $\beta = \alpha + (x+y\tau)r(\p^k)$ to conclude the proof. 
\end{proof}

Now suppose $D_{\alpha} \sim D_{\beta}$. Unlike the case when $K$ has class number $1$, we cannot apply the Chinese Remainder theorem directly to ray classes to show periodicity in the ray class group for arbitrary modulus $\m$. However, we can apply the Chinese Remainder theorem to the congruence conditions arising from the prime-power factors of \(\mathfrak m\), therefore obtain a common period and get the claim we want. Let $\p_1^{k_1}\ldots\p_n^{k_n}$ be the unique ideal factorisation of $\m$ (note that $K$ is assumed to be imaginary, so there is no issue about real places), then the corollary above tells us that there exists $\delta_i \in K$ and $\mathfrak{d}_{i} \in N_{L/K}(I_L)$ such that 
\[
\alpha \equiv \beta \bmod \pi(\p_i^{k_i}) \implies D_{\alpha}D_{\beta}^{-1}=(\delta_i)\mathfrak{d}_{i}, \delta_i \equiv 1 \bmod \p_i^{k_i}, \mathfrak{d}_{i} \in N_{L/K}\left(I_{L}^{\p_i^{k_i}}\right).
\]
 for all $i=1, \ldots, n$. Therefore, we can apply the Chinese Remainder theorem to claim that for any arbitrary modulus $\m$, there exists $\delta \in K$ and $\mathfrak{d} \in N_{L/K}(I_L)$ such that 
\[
\alpha \equiv \beta \bmod \bigcap\limits_{i=1}^{n}\left(\pi(\p_i^{k_i})\right) \implies D_{\alpha}D_{\beta}^{-1}=(\delta)\mathfrak{d}, \delta \equiv 1 \bmod \m, 
\]
\[
\text{where } \delta=\prod_{i=1}^n \delta_i \text{ and }\mathfrak{d}=\prod_{i=1}^{n}\mathfrak{d}_i \in N_{L/K}\left(I_L^{\m}\right). 
\]

This shows that the sequence of ray classes corresponding to the denominator ideals $(D_{\alpha})_{\alpha\in \mathcal{O}}$ is periodic in $I_K^{\m}/T_{L/K}^{\m}$ for arbitrary modulus $\mathfrak{m}$ coprime to $D_{\alpha}$. Consequently, we can establish the existence of infinitely many elements in the sequence $(D_{\alpha})$, indexed by $\mathcal{O}$-prime elements, that are divisible by $\mathcal{O}_K$-primes up to a specified valuation and are non-trivial with respect to a given ray class character. Unlike the original proof in \cite[Section 3.5]{Brauer}, restricting the base fields of the elliptic curves under consideration to imaginary fields avoids complications with the signs of denominator sequences, thereby significantly simplifying the proof.

\begin{prop}\label{Lambda estimate}
Let $\chi \colon I_K^{\m}/T_{L/K}^{\m} \rightarrow \C^{\times}$ be a group character with modulus $\m$ as described in (\ref{character}), where $\m$ is divisible by all the primes $\p \in \OO_K$ that ramify in $L$. Denote the conductor of the order $\OO$ by $f=[\OO_F\colon \OO]$. Let $\pi$ be as defined in \Cref{Non PID periodicity} for the sequence $(D_{\alpha})_{\alpha \in \OO}$ (resp.\ \Cref{PID periodicity} for $(B_{\alpha})_{\alpha \in \OO}$) in the group $I_K^{\m}/T_{L/K}^{\m}$ (resp.\ modulo $\m$). Suppose that there exists $\alpha \in \OO$ such that $\alpha \in (\OO/\pi\OO)^{\times}$ and the character $\chi$ satisfies $\chi(D_{\alpha})\not\in \{0, 1\}$ (resp.\ $\chi(B_{\alpha})\not\in \{0, 1\}$), then the set 
\begin{multline}
\{\ell \in \OO\colon \ell \text{ generates a prime ideal in } \OO \text{ and coprime to } f,  \Ord(\chi(\p)) \nmid \nu_{\p}(D_{\ell}) \text{ and } \p \nmid \m\\
\text{for some prime } \p \subset \OO_K \text{ with } \Norm(\ell)\le X\} \sim \frac{1}{\#(\OO/\pi\OO)^{\times}}\cdot\frac{1}{h(\OO)}\cdot\frac{X}{\log X}, X \to\infty,
\end{multline}
where $\Ord(\chi(\p))$ denotes the multiplicative order of the root of unity $\chi(\p)$. 
\end{prop}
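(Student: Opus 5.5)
The plan is to reduce the count to a prime-counting statement for prime elements of $\OO$ lying in a fixed residue class modulo $\pi$, and then apply a Chebotarev / prime ideal theorem for the order $\OO$.

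\textbf{Step 1: the good residue class.} Fix $\alpha$ as in the hypothesis, so $\alpha\in(\OO/\pi\OO)^{\times}$ and $\chi(D_\alpha)\notin\{0,1\}$. Take any $\ell\in\OO$ with $\ell\equiv\alpha\bmod\pi$. By \Cref{Non PID periodicity} (resp.\ \Cref{PID periodicity}, after the Chinese Remainder argument following it), $D_\ell\sim D_\alpha$ in $I_K^{\m}/T_{L/K}^{\m}$ (resp.\ $B_\ell\equiv B_\alpha\bmod\m$). Hence $\chi(D_\ell)=\chi(D_\alpha)\neq 0,1$.

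\textbf{Step 2: a prime witnessing the failure.} Since $\chi(D_\ell)\neq 0$, the ideal $D_\ell$ is coprime to $\m$. Write $D_\ell=\prod\p^{\nu_\p(D_\ell)}$ with every $\p\nmid\m$. Then
\[
\chi(D_\ell)=\prod_{\p}\chi(\p)^{\nu_\p(D_\ell)} .
\]
If $\Ord(\chi(\p))\mid\nu_\p(D_\ell)$ held for every $\p$, this product would equal $1$, a contradiction. So some $\p\nmid\m$ has $\Ord(\chi(\p))\nmid\nu_\p(D_\ell)$. Consequently every $\ell$ in the class of $\alpha$ modulo $\pi$ belongs to the set in the statement.

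\textbf{Step 3: counting.} It remains to show that the prime elements $\ell$ of $\OO$ generating prime ideals, coprime to $f$, with $\Norm(\ell)\le X$ and $\ell\equiv\alpha\bmod\pi$, number
\[
\sim\frac{1}{\#(\OO/\pi\OO)^{\times}}\cdot\frac{1}{h(\OO)}\cdot\frac{X}{\log X}.
\]
Prime ideals of $\OO$ coprime to $f$ correspond bijectively and norm-preservingly to prime ideals of $\OO_F$ coprime to $f$. The principal ones are those trivial in the ring class group, whose order is $h(\OO)$. Imposing the congruence modulo $\pi$ is a condition in a ray class group modulo $\pi f$ for $F$. By the prime ideal theorem for Hecke characters (equivalently Chebotarev in the corresponding ray class field), the primes in each class are equidistributed with total $X/\log X$. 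This yields the density $1/(h(\OO)\,\#(\OO/\pi\OO)^{\times})$.

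\textbf{Main obstacle.} The expected difficulty is the bookkeeping between counting elements $\ell$ and counting ideals. Each principal prime ideal has $\#\OO^{\times}$ generators, and these may fall into different classes modulo $\pi$. The plan is to count generators with multiplicity, observing that the unit group acts on $(\OO/\pi\OO)^{\times}$. The factor $\#\OO^{\times}$ then cancels against the index of units in the ray/ring class group sequence, leaving exactly $1/(h(\OO)\#(\OO/\pi\OO)^\times)$. It is also routine but necessary to discard the finitely many $\ell$ dividing $f$ or $\pi$, and to check that the equality in Step 3 is really $\sim$ rather than just $\gg$. This suggests the set in the statement should be read as counting the $\ell$ in the given residue class; otherwise one only obtains a lower bound, which suffices for the sieve.
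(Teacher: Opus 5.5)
Your argument is the paper's argument. Periodicity (\Cref{Non PID periodicity}, resp.\ \Cref{PID periodicity} with the Chinese Remainder step) puts every $\ell\equiv\alpha \bmod \pi$ in the class of $D_\alpha$. Unique factorisation then gives a prime $\p\nmid\m$ with $\Ord(\chi(\p))\nmid\nu_\p(D_\ell)$. The count comes from the prime ideal theorem with equidistribution over classes. Running the factorisation argument on $D_\ell$ directly, as you do, is slightly cleaner than the paper, which runs it on $D_\alpha$ and then transfers $\chi(D_\ell)=\chi(D_\alpha)$. Your remark that only a lower bound follows, because the set in the statement is larger than one residue class, is correct. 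It applies equally to the paper's proof, which also only counts the residue class $\Lambda$.

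One claim in your last paragraph is wrong: the unit factor does not cancel. Consider the group of ideals of $\OO$ prime to $\pi f$, modulo principal ideals with a generator $\equiv 1 \bmod \pi$. It sits in the exact sequence $\OO^\times\to(\OO/\pi\OO)^\times\to G\to\Cl(\OO)\to 1$. Hence $\#G=h(\OO)\,\#(\OO/\pi\OO)^\times/\#\operatorname{im}(\OO^\times\to(\OO/\pi\OO)^\times)$.

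This gives two different counts:
\begin{itemize}
\item The prime ideals having a generator $\equiv\alpha$ number $\sim \#\operatorname{im}(\OO^\times)\,X/(h(\OO)\,\#(\OO/\pi\OO)^\times\log X)$.
\item Counting generators $\ell\equiv\alpha$ with multiplicity multiplies this by $\#\ker(\OO^\times\to(\OO/\pi\OO)^\times)$. The numerator becomes $\#\OO^\times$.
\end{itemize}
For instance, take $\OO=\Z[i]$ and modulus a prime $q\equiv 3\bmod 4$. Both counts give $\frac{4}{q^2-1}\frac{X}{\log X}$, not $\frac{1}{q^2-1}\frac{X}{\log X}$.

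So in neither reading does the stated constant come out exactly. What your argument, and the paper's, actually proves is a lower bound of at least $(1+o(1))\frac{1}{\#(\OO/\pi\OO)^\times}\cdot\frac{1}{h(\OO)}\cdot\frac{X}{\log X}$. That lower bound is all that \Cref{Lambda size} and the subsequent sieve use.
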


\begin{proof}
Assume $\chi(D_{\alpha})\not\in \{0, 1\}$, so $\m$ is coprime with $D_{\alpha}$ and $D_{\alpha} \not\in T_{L/K}^{\m}$, hence not an ideal norm in $L$. By unique factorisation of ideals in $\OO_K$, we have 
\[
\chi(D_{\alpha})=\prod_{i=1}^n\chi_{\Art}(\p_{i})^{e_i} \neq 1.
\]

If we further have $\Ord(\chi(\p_i))$ dividing $\nu_{\p_i}(D_{\ell})$, then $\chi(\p_i)^{e_i}=1$. But since we assumed $\chi(D_{\alpha})\neq 1$, there must exist at least a prime ideal $\p_{i}$ with  $\Ord(\chi(\p_i)) \nmid \nu_{\p_i}(D_{\alpha})$ and $\p_i \mid D_{\alpha}$. 

We claim that it is sufficient to consider the set of $\OO$--principal prime ideals
\[
\Lambda=\{(\ell) \text{ prime in }\OO \text{ and }\Nm(\ell)\leq X \colon \ell \equiv \alpha \bmod \pi\OO, \alpha \in (\OO/\pi\OO)^{\times}\}.
\]
If $\ell \in \Lambda$, then by \Cref{Non PID periodicity} we have $D_{\ell} \sim D_{\alpha}$, so $\chi(D_{\ell})=\chi(D_{\alpha})\neq 0, 1$. From \cite[Proposition 7.20]{Cox}, there is a bijection between $\OO_K$--ideal prime to $f$ and $\OO$--ideal prime to $f$. There are only $O(1)$ $\OO$--prime ideals that are not coprime to $f$, so we may estimate the desired quantity from above with the prime ideal theorem (see \cite[Chapter 7.2, Corollary 1]{Narkiewicz}) and together with the equidistribution result, we obtain the asymptotic
\[
\frac{1}{\#(\OO/\pi\OO)^{\times}}\cdot\frac{1}{h(\OO)}\cdot\frac{X}{\log X}.
\]
In the case of $\Cl(K)=1$, the exact same argument applies to $B_{\alpha}$, except we consider $B_{\alpha} \bmod \m$ instead. 
\end{proof}
\section{Proof of \Cref{Main ideal norm result}}\label{Proof of sieve main result}
We now prove our main result. The method is very similar to the original one in \cite{Brauer}, except few changes in the justifications. Let $\chi$ be a group character as described in (\ref{character}) and assume it satisfies the hypotheses in \Cref{Lambda estimate}. In this section, we choose finite sets $S_1$ and $S_2$ such that $S_1$ consists of $\OO_K$--prime ideals dividing $\m$, and all primes $\p$ at which $E$ has bad reduction, and $S_2$ contains $\OO$--prime ideals dividing the conductor of the order $f=[\OO_F \colon \OO]$. Let 

\begin{align*}
\mathcal{P}
&= \{\text{primes } \p \not\in S_1 \colon \Ann(P \bmod \p) \text{ is a principal prime ideal in } \OO, \\
&\qquad \text{ and } \Ord(\chi(\p)) \nmid \nu_{\p}\!\left([\Ann(P \bmod \p)]P\right)\}, \\
\Lambda
&= \{\Ann(P \bmod \p)\colon \p \in \mathcal{P}\}, \\
T
&=\{\p \text{ prime in } \OO_{K}\colon \nu_{\p}(P)>0\}, \\
\ell_{\p}&=\Ann(P \bmod \p) .
\end{align*}

We first prove the following estimate, which will be used in counting the number of multiples of $P$ to be removed in the sieve process. 

\begin{lemma}\label{Lambda size}
Then 
\begin{multline}
\#\{\ell \in \Lambda \colon \ell \not\in S_2, \text{ a principal ideal in } \OO \text{ and }\Nm(\ell)\leq X\} \\
\geq \frac{1}{\#(\OO/\pi\OO)^{\times}}\cdot\frac{1}{h(\OO)}\cdot\frac{X}{\log X} - \#S_1 -\#S_2 - \#T,
\end{multline}
where $\pi$ is the period of the sequence $D_{\alpha}$ (resp.\ ($B_{\alpha}$) in the ray class group $\Cl_{K}^{\m}$ (resp.\ modulo $\m$). 
\end{lemma}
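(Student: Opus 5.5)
The plan is to compare the set in \Cref{Lambda size} with the set counted in \Cref{Lambda estimate}. For each principal prime $\ell\in\OO$ coprime to $f$ with $\Norm(\ell)\le X$ from that set, we will produce a prime $\p\in\mathcal{P}$ with $\ell_\p=(\ell)$. We then show the assignment $\ell\mapsto\ell_\p$ fails to land in $\Lambda\setminus S_2$ for at most $\#S_1+\#S_2+\#T$ values of $\ell$. The lower bound then follows from the asymptotic of \Cref{Lambda estimate}; for $X$ large the main term dominates, and the finitely many discrepancies are absorbed in the subtracted constants.

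First we pin down the annihilator. Take $\ell$ from \Cref{Lambda estimate}, with a prime $\p\nmid\m$ such that $\Ord(\chi(\p))\nmid\nu_\p(D_\ell)$. In particular $\p\mid D_\ell$, so $[\ell]P\in E_1(K_\p)$, i.e.\ $\ell\in\Ann(P\bmod\p)$. We may also assume $\p\notin S_1$ after discarding $O(\#S_1)$ values of $\ell$: only finitely many primes lie in $S_1$, and by strong divisibility (\Cref{divisibility}) a fixed prime $\p$ divides $D_\ell$ for at most one prime $\ell$ up to associates, apart from the $\p\in T$ where $\p\mid D_1$. Since $(\ell)$ is a prime (hence maximal) ideal of $\OO$ and $\Ann(P\bmod\p)$ is a proper ideal containing it, we get $\Ann(P\bmod\p)=(\ell)$, unless $\Ann(P\bmod\p)=\OO$. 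The latter means $\p\mid D_1$, i.e.\ $\p\in T$, and this excludes at most $\#T$ primes, each affecting a bounded number of $\ell$ by the same strong divisibility argument.

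Next we transfer the valuation condition. Since $\ell_\p=(\ell)$, we have $\nu_\p([\Ann(P\bmod\p)]P)=\nu_\p([\ell]P)=\nu_\p(D_\ell)$. Here we use the definition of $D_\ell$ via $x([\ell]P)$, where $\nu_\p(x)=-2\nu_\p(P)$ at primes of good reduction, together with the independence of generator up to units, which follows from \Cref{valuation}. Hence $\Ord(\chi(\p))\nmid\nu_\p([\ell_\p]P)$, so $\p\in\mathcal{P}$ and $(\ell)\in\Lambda$. Removing those $\ell$ lying in $S_2$ costs at most $\#S_2$. The map $\ell\mapsto(\ell)$ is injective on prime ideals, which gives the count.

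The main obstacle is the bookkeeping in the first step. We must verify that the excluded cases ($\p\in S_1$, $\p\in T$, and $\ell\in S_2$) each remove only one $\ell$ per offending prime. This relies on the strong divisibility statement $D_{(\ell)+(\ell')}=D_\ell+D_{\ell'}$: for distinct primes $\ell,\ell'$ we have $D_{\OO}=D_1$, so a prime outside $T$ divides at most one $D_\ell$. I would handle this first.
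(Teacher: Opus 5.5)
Your route is the paper's: take the primes $\ell$ supplied by \Cref{Lambda estimate}, use maximality of $(\ell)$ to force $\Ann(P\bmod\p)=(\ell)$, and discard the $\ell$ attached to $S_1$, $S_2$ and $T$. Your observation $D_\ell+D_{\ell'}=D_{\OO}=D_1$ is correct. It justifies a step the paper leaves implicit, namely that a prime $\p\notin T$ divides $D_\ell$ for at most one $\ell$, so your $S_1\setminus T$ and $S_2$ exclusions are fine.

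\textbf{The gap is the $T$ step.} You assert that each $\p\in T$ affects a bounded number of $\ell$ ``by the same strong divisibility argument''. That argument is vacuous for $\p\in T$, since $\p\mid D_1\mid D_\ell$ for every $\ell$, and the claim itself is false in general. By \Cref{valuation}, for $\p\in T$ and $\ell\notin\p$ you get $\nu_\p(D_\ell)=\nu_\p(P)$, independently of $\ell$. So if $\Ord(\chi(\p))\nmid\nu_\p(P)$, this single $\p$ is a witness for every $\ell$ with $(\ell)\neq\p\cap\OO$. Discarding the $\ell$ whose chosen witness lies in $T$ (or in $S_1\cap T$) can therefore discard all of them.

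What is true, via the paper's simple-module argument ($E_n(K_\p)/E_{n+1}(K_\p)\cong\OO_K/\p$) or directly from \Cref{valuation}, is that the valuation at $\p\in T$ can jump only for the one $\ell$ with $(\ell)=\p\cap\OO$. That is all the $\#T$ term can legitimately pay for. The missing ingredient is a reason why most $\ell$ have a witness outside $T\cup S_1$.

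Here is a condition that supplies it. Take $\ell\equiv\alpha\bmod\pi$ with $\ell\notin\p$ for all $\p\in T$. Then the $T$-part of $D_\ell$ is exactly $D_1$, so $D_\ell D_1^{-1}$ is supported off $T$ and $\chi(D_\ell D_1^{-1})=\chi(D_\alpha)\chi(D_1)^{-1}$. If this value is $\neq 1$, your strong-divisibility bound for $S_1\setminus T$ gives a witness outside $T\cup S_1$ for all but boundedly many $\ell$, and your remaining steps go through.

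This holds, for example, when $P$ is integral, so that $T=\emptyset$, as in the paper's examples. But $\chi(D_\alpha)\notin\{0,1\}$ alone does not imply it: take $\alpha=1$ with $\chi(D_1)\notin\{0,1\}$. The paper's closing paragraph does not settle this case either. It asserts $\ell=\p\cap\OO$ when $\nu_\p([\ell]P)=\nu_\p(P)$, whereas \Cref{valuation} shows that this equality forces $\ell\notin\p$.
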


\begin{proof}
As in the original proof, we will give a lower bound to $\#\Lambda$ by estimating the number of prime elements in $\Lambda$ only (note that $\Ann(P \bmod \p)$ does not need to be a prime ideal). To do so, we consider the multiples $\left[\ell\right]P$ where $\ell$ runs over all principal prime ideals in $\OO$. If $\p$ is a prime ideal in $\OO_K$ such that $\nu_{\p}\left(\left[\ell\right]P\right) > \nu_{\p}(P) \geq 0$, then we have either $\ell=\Ann(P \bmod \p)$ or $P \equiv O \bmod \p$ (equivalently, $\p \in T$).

The former is obvious; if $P \equiv O \bmod \p$, then we claim that $\ell\OO_K = \p$, or equivalently $\p \cap \OO = \ell$. Indeed, if $\p \in T$, then we must have $P \in E_{\nu_{\p}(P)}(K)$, $P \not\in E_{\nu_{\p}(P)+1}(K)$ and $[\ell]P \in E_{\nu_{\p}(P)+1}(K)$. In other words, $\ell$ is an element in $\Ann(P)$ in the $\OO$--module $E_{\nu_{\p}(P)}(L)/E_{\nu_{\p}(P)+1}(L)$. Let $W=E_{\nu_{\p}(P)}(K)/E_{\nu_{\p}(P)+1}(K)$. From \cite[Corollary 2.10]{Streng}, we know that we have an $\OO$--module isomorphism $W \cong \OO_{K}/\p$. This implies that $W$ is a simple $\OO$--module, therefore $\Ann(P)$ is a proper maximal ideal in $\OO$ (by assumption, $\Ann(P)$ cannot be the entire endomorphism ring $\OO$). 

Overall, we now have a containment of prime ideals $\ell \subseteq \Ann(P \bmod \p) \subsetneq \OO$ in $\OO$. Since there are only $\#S_2=O(1)$ primes that are not coprime to $f$, the conductor of $\OO$, we may assume that both $\Ann(P \bmod \p)$ and $\ell$ are coprime to $f$. It follows that both $\Ann(P \bmod \p)$ and $\ell$ are maximal ideals of $\OO$. Therefore, as $\OO$--ideals, we must have $\Ann(P \bmod \p)=\ell$, which gives the isomorphism 
\begin{equation}\label{isomorphism module}
\OO/\ell \cong \OO/\Ann(P) = M \cong \OO_K/\p.
\end{equation}

Since $f\OO + \ell = \OO$, $\ell\OO_F$ is an $\OO_F$-ideal prime to $f$ of the same norm and hence we also have the isomorphism 
\[
\OO_F/\ell\OO_F \cong \OO_K/\p,
\]
which tells us that $\left[\OO_K/\p \colon \OO_F/\ell\OO_F\right]=1$. Let $p$ be the rational prime below $\ell\OO_F$ and $\p$, then we have two possible cases: if $p$ is inert or ramifies in $F$, then $p=\ell\OO_F$ or $\p=\left(\ell\OO_F\right)^2$; both implies that $\p=\ell\OO_F$ and hence $\p \cap \OO = \ell$. If $p$ splits completely in $F$, then $\p \cap \OO = \ell \text{ or } \bar{\ell}$. But if $\ell$ is a prime in $\OO$, then so as $\bar{\ell}$ and therefore the argument also applies to $\bar{\ell}$, with the corresponding $\OO_K$--prime being $\bar{\p}$. So without loss of generality, we showed that if $P \equiv O \bmod \p$, then $\p \cap \OO =\ell$. This means counting the size of $T$ is equivalent to counting the number of principal prime ideals $\ell$ in $\OO$ that fail to be the annilator of $P \bmod \p$ in the $\OO$--module $E_{0}(K)/E_{1}(K)$. 

From the proof of \Cref{Lambda estimate}, we know there exists a set of $\OO$-prime $\ell$ with $\Norm(\ell) \leq X$ whose cardinality can be bounded from below by the asymptotic
\[
\frac{1}{\#(\OO/\pi\OO)^{\times}}\cdot\frac{1}{h(\OO)}\frac{X}{\log X}
\]
such that $ \Ord\left(\chi(\p)\right) \nmid \nu_{\p}(D_{\ell})$ and $\p \nmid \m$ for some $\OO_K$--prime $\p$. For these $\OO$--primes $\ell$, we necessarily have $\nu_{\p}(D_{\ell})>0$. If $\nu_{\p}([\ell]P)=\nu_{\p}(P)$, then $\p \in T$ and $\ell = \p \cap \OO$. Overall, by excluding $\#S_1$, $\#S_2$ and $\#T$ of the $\OO$--primes $\ell$, these primes are now principal primes that lie in $\Lambda$ with $\p \not\in S_1$, hence give the result.
\end{proof}

The next lemma is a modification of the classical elliptic sieve, originating from \cite[Lemma 4.3]{Brauer}. It tells us which multiples of $P$ fail the norm condition outlined in \Cref{ray class group and norm}. It serves as the tool that connects the elliptic sieve and the $\p$--adic property of points on elliptic curves. 

\begin{lemma}
Let $\p$ be a prime in $\mathcal{P}$. For $\alpha \in \OO$, if $\alpha \in \ell_{\p}$ with $\ell_{\p}$ coprime to $f$ and further satisfying
\[
\left(\alpha\ell_{\p}^{-1}\right)\OO_{K}+\p=\OO_K,
\]
then $\Ord\left(\chi(\p)\right) \nmid \nu_{\p}\left([\alpha]P\right)$, where $\ell_{\p}^{-1}$ is a fractional ideal in $\OO$. 
\end{lemma}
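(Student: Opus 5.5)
Since $\p\in\mathcal{P}$, the annihilator $\ell_{\p}=\Ann(P \bmod \p)$ is a principal prime ideal of $\OO$; write $\ell_{\p}=(\ell)$ with $\ell\in\OO$. Then $\alpha\in\ell_{\p}$ gives $\alpha=\ell\beta$ with $\beta\in\OO$. The hypothesis $(\alpha\ell_{\p}^{-1})\OO_K+\p=\OO_K$ says exactly that $\beta\OO_K$ is coprime to $\p$, i.e.\ $\p\nmid\beta\OO_K$. The plan is to show
\[
\nu_{\p}([\alpha]P)=\nu_{\p}([\beta]([\ell]P))=\nu_{\p}([\ell]P).
\]
Given this, the definition of $\mathcal{P}$, namely $\Ord(\chi(\p))\nmid \nu_{\p}([\ell_{\p}]P)$, gives the conclusion directly. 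Multiplication in $\OO$ is commutative, so $[\alpha]P=[\beta]([\ell]P)$ and the order of composition is harmless.

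First I check that $[\ell]P\in E_1(K_{\p})$, so that \Cref{valuation} applies to the point $Q=[\ell]P$. Since $\p\notin S_1$, $E$ has good reduction at $\p$, so $E_0(K_\p)=E(K_\p)$. Here $\Ann(P\bmod\p)$ is read as the annihilator of $P$ in the reduction $\tilde E(\OO_K/\p)\cong E_0(K_\p)/E_1(K_\p)$, consistent with its use in \Cref{Lambda size}. Hence $\ell$ annihilates $P \bmod \p$, i.e.\ $[\ell]P\in E_1(K_\p)$ and $\nu_\p([\ell]P)\geq 1$. One subtlety is that $\ell$ is a principal generator of $\Ann(P\bmod\p)$ and $[\ell]P$ could a priori be $O$; this is excluded because $P$ is non-torsion. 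So $\nu_{\p}([\ell]P)$ is a finite positive integer.

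Next I apply \Cref{valuation} to $Q=[\ell]P\in E_1$ and the endomorphism $\beta$. Since $\p\nmid\beta\OO_K$, equality holds:
\[
\nu_\p([\beta]Q)=\nu_\p(Q)+\nu_\p(\beta)=\nu_\p(Q).
\]
The reason is that $[\beta]$ acts invertibly on each graded piece $E_i/E_{i+1}\cong\OO_K/\p$ (\Cref{Streng module}). This step uses the injectivity of $[\beta]$ on $E_{\nu_\p(Q)}/E_{\nu_\p(Q)+1}$, which sends $Q$ to a nonzero class and so shows $[\beta]Q\notin E_{\nu_\p(Q)+1}$. Combined with $[\beta]Q\in E_{\nu_\p(Q)}$ (each $E_i$ is an $\OO$-submodule), this gives the equality. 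Substituting back yields $\nu_\p([\alpha]P)=\nu_\p([\ell_\p]P)$, which is not divisible by $\Ord(\chi(\p))$.

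The main obstacle is bookkeeping rather than depth. One must confirm that ``$(\alpha\ell_\p^{-1})\OO_K$'' is the integral ideal $\beta\OO_K$. This is where coprimality of $\ell_\p$ to $f$ is used: invertibility of $\ell_\p$ in $\OO$, and the correspondence of \cite[Proposition 7.20]{Cox}, make $\alpha\ell_\p^{-1}$ an honest integral $\OO$-ideal whose extension to $\OO_K$ is $\beta\OO_K$. One must also confirm that $\nu_\p([\ell_\p]P)$ in the definition of $\mathcal{P}$ is independent of the choice of generator. A different generator differs from $\ell$ by a unit $u$, and $\nu_\p$ is unchanged by \Cref{valuation} since units are prime to $\p$. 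I would address both points briefly before the main computation.
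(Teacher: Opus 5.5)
Your proposal is correct and is essentially the paper's proof: identify $\alpha\ell_{\p}^{-1}$ with the integral ideal $\beta\OO$, apply \Cref{valuation} to $[\ell_{\p}]P\in E_1(K_{\p})$ with equality because $\p\nmid\beta\OO_K$, and conclude from the definition of $\mathcal{P}$. Your extra checks (that $[\ell_{\p}]P\in E_1(K_{\p})$ has finite positive valuation, and that the valuation does not depend on the choice of generator) fill in details the paper leaves implicit; note only that, since $\ell_{\p}$ is principal, its invertibility does not actually require coprimality to $f$.
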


\begin{proof}
First note that $\alpha\ell_{\p}^{-1}$ is really an integral ideal in $\OO$: since $\alpha \in \ell_{\p}$ and $\ell_{\p}$ does not lie above the conductor $f=[\OO_{F} \colon \OO]$, we have $\left(\alpha\OO\right)\ell_{\p}^{-1} \subseteq \ell_{\p}\ell_{\p}^{-1} = \OO$. 

We assumed that $\p$ is coprime with $\alpha\ell_{\p}^{-1}\OO_K$, so applying \Cref{valuation} to $[\ell_{\p}]P \in E_{1}(K)$ gives
\[
\nu_{\p}\left([\alpha]P\right)=\nu_{\p}\left(\alpha\ell_{\p}^{-1}\right)+\nu_{\p}\left([\ell_{\p}]P\right)=\nu_{\p}\left([\ell_{\p}]P\right),
\]
which is not divisible by $\Ord\left(\chi(\p)\right)$ by the definition of $\mathcal{P}$. 
\end{proof}

\begin{remark}
In \Cref{motivation sieve}, we mentioned that this version of the elliptic sieve differs from that of Kowalski's in \cite{Kowalski} in the $\p$--adic information required. Interested readers should compare the lemma above with that in \cite[Lemma 4.7]{Kowalski} to see that the former really provides information about modulo $\p^{n_{\p}}$, while the latter only needs to know modulo $\p$. 
\end{remark}

In other words, we now know that if $y\left([\alpha]P\right)$ is an ideal norm from $L$, $\alpha \in \OO$ has to belong to the set
\[
\{\alpha \in \OO \colon \alpha \not\in \ell_{\p} \text{ or } \alpha\OO_k \equiv 0 \bmod \p\left(\ell_{\p}\OO_k\right), \text{ where } \p \in \mathcal{P}\},
\]
whose cardinality can be bounded above by $N_{0}(B)+N_{1}(B)$, where 
\begin{align*}
N_{0}
&= \{\Norm_{F/\Q}(\alpha) \leq B \colon \alpha \in \OO, \alpha \not\equiv 0 \bmod \ell \text{ for all } \ell \in \Lambda \text{ with } \log(B) \leq \Norm_{F/\Q}(\ell) \leq B\}, \text{ and }\\
N_{1}
&=\#\{\Norm(\alpha\OO_{K})\leq B\colon \alpha \equiv 0 \bmod (\ell_{\p}\p)\OO_K \text{ for some } \p\in\mathcal{P} \text{ with } \log(B)\leq \Norm(\ell_{\p})\leq B\}.
\end{align*}

\begin{remark}
Note that due to the congruence conditions, we are treating $\alpha$ as an element in $\OO_F$ and $\OO_K$ in $N_0$ and $N_1$ respectively, which we are allowed to do so under the assumption $F \subseteq K$. 
\end{remark}

\begin{lemma}
We have 
\[
N_{1}(B) \ll \frac{1}{[K\colon F]}\frac{B}{\left(\log B\right)^{[K\colon F]}}.
\]
\end{lemma}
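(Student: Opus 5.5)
The bound on $N_1(B)$ comes from a union bound over $\p\in\mathcal{P}$, counting lattice points in each principal ideal $(\ell_{\p}\p)\OO_K$, and then summing $1/\Norm$ over the relevant range, as in \cite[Lemma 4.4]{Brauer}.

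\textbf{Step 1 (count per prime).} Fix $\p\in\mathcal{P}$ with $\log B\le \Norm(\ell_{\p})\le B$. The condition $\alpha\equiv 0\bmod (\ell_{\p}\p)\OO_K$ for $\alpha\in\OO$ means $\alpha\in \mathfrak{c}_{\p}\coloneqq (\ell_{\p}\p)\OO_K\cap\OO$. This is a sublattice of the rank-$2$ lattice $\OO\subset\C$, and its index in $\OO$ is at least the norm of its image. Since $\OO/\ell_{\p}\cong\OO_K/\p$ by (\ref{isomorphism module}), the index is at least of order $\Norm(\ell_{\p})^2$ when $\p$ and $\ell_{\p}$ lie over the same rational prime, and at least $\Norm(\ell_{\p})\cdot\Norm_{F}(\p\cap\OO_F)$ in general. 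Counting lattice points of a rank-$2$ lattice in the disc $\Norm_{F/\Q}(\alpha)\le B$ gives
\[
\#\{\alpha\in\mathfrak{c}_{\p}\colon \Norm(\alpha)\le B\}\ll \frac{B}{[\OO\colon\mathfrak{c}_{\p}]}+O\!\left(\frac{\sqrt{B}}{\lambda_1(\mathfrak{c}_{\p})}\right)+1,
\]
where $\lambda_1$ is the shortest nonzero vector. The error terms are harmless, because every nonzero $\alpha\in\mathfrak{c}_{\p}$ has norm at least $\Norm(\ell_{\p})$. I would work directly with $\Norm(\alpha\OO_K)=\Norm_{F/\Q}(\alpha)^{[K\colon F]}$; this is where the exponent $[K\colon F]$ enters, since the constraint $\Norm(\alpha\OO_K)\le B$ gives $\Norm_{F/\Q}(\alpha)\le B^{1/[K\colon F]}$.

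\textbf{Step 2 (sum over primes).} Summing over $\p$, I would use that $\p\mapsto\ell_{\p}$ is at most $O(1)$-to-one on primes with $\ell_{\p}$ prime. This holds because $\ell_{\p}$ determines $\p$ up to the $[K\colon F]$ primes of $K$ over $\ell_{\p}\OO_F$ with residue degree $1$, again by (\ref{isomorphism module}). The sum then reduces to
\[
\sum_{\log B\le\Norm(\ell)\le B}\frac{B'}{\Norm(\ell)^2}\ll \frac{B'}{\log B},
\]
with $B'=B^{1/[K\colon F]}$. Mertens-type bounds for prime ideals in $\OO$ from the prime ideal theorem handle any tail involving $1/\Norm(\ell)$, giving a $\log\log$ saving. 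To reach the stated shape $B/(\log B)^{[K\colon F]}$, I would rewrite the count in terms of $\Norm(\alpha\OO_K)$. The factor $\Norm(\ell_{\p}\p)^{[K\colon F]}$-type index of the $\OO_K$-ideal, combined with the lower bound $\Norm(\ell_{\p})\ge\log B$, yields $\sum_{\Norm\ell\ge\log B}\Norm(\ell)^{-1-[K\colon F]}\ll (\log B)^{-[K\colon F]}$. The prefactor $1/[K\colon F]$ arises from the density of ideals of $K$ versus $F$, or is absorbed into the implied constant.

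\textbf{Main obstacle.} I expect the hardest part to be the index computation in Step 1: showing that $[\OO\colon\mathfrak{c}_{\p}]$ really grows like $\Norm(\ell_{\p})^{1+\epsilon}$ or better, rather than only $\Norm(\ell_{\p})$. If it grew only like $\Norm(\ell_{\p})$, the sum $\sum 1/\Norm(\ell)$ would only give $\log\log B$ and the bound would fail. The key input is that $\p\nmid$ the unit part: $\p$ divides $\alpha\ell_{\p}^{-1}\OO_K$ and is coprime to $\ell_{\p}$'s cofactor. My first attempt would be to show, via \Cref{valuation} and the definition of $\ell_{\p}$, that $\p$ and $\ell_{\p}\OO_K$ contribute independent congruence conditions on $\alpha$, so the index is multiplicative. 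This would give index at least $\Norm(\ell_{\p})\Norm(\p)^{1/[K\colon F]}\ge\Norm(\ell_{\p})^{2}/O(1)$.
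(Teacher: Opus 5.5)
Your Step 2 and your ``main obstacle'' both rest on applying (\ref{isomorphism module}) to an arbitrary $\p\in\mathcal{P}$, and that step fails. The isomorphism $\OO/\ell_{\p}\cong\OO_K/\p$ was only established for $\p\in T$. There $P\equiv O\bmod\p$, and $\ell_{\p}$ is the annihilator of $P$ in $E_{\nu_{\p}(P)}(K_{\p})/E_{\nu_{\p}(P)+1}(K_{\p})$. For $\p\in\mathcal{P}\setminus T$, $\ell_{\p}$ is the annihilator of $P\bmod\p$ in $\tilde{E}(k_{\p})$, and $\p$ typically lies over a different rational prime from $\ell_{\p}$.

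So $\ell_{\p}$ does not determine $\p$. The primes $\p\notin T$ with $\ell_{\p}=\ell$ are the prime divisors of $D_{\ell}$ not dividing $D_1$, and nothing bounds how many there are. Hence the estimate $\sum_{\p\colon\ell_{\p}=\ell}\Norm(\p)^{-1}\ll\Norm(\ell)^{-1}$ that your Step 2 needs is not available. The paper's sum runs over $\ell\in\Lambda$, i.e.\ it keeps a single $\p_{\ell}\in\mathcal{P}$ for each $\ell$. This is harmless: discarding primes from $\mathcal{P}$ only enlarges the set of surviving $\alpha$ and leaves $N_0$ unchanged.

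For the same reason, (\ref{isomorphism module}) gives no relation between $\Norm(\p)$ and $\Norm(\ell_{\p})$. Coprimality of $\p$ and $\ell_{\p}\OO_K$ (your ``independent congruence conditions'') would only make the index multiplicative; it says nothing about the size of $\p$. So your inequality $\Norm(\ell_{\p})\Norm(\p)^{1/[K\colon F]}\ge\Norm(\ell_{\p})^2/O(1)$ is unsupported. It is in fact false in general for $[K\colon F]\ge 2$, since $\Norm(\p)$ can be comparable to $\Norm(\ell_{\p})$.

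The missing input is Hasse's bound, which is what the paper uses. The cyclic module $\OO/\ell_{\p}$ embeds in $\tilde{E}(k_{\p})$ (or, as in the paper, $1-\eta\in\ell_{\p}$ for the Frobenius $\eta\in\OO$), so
\[
\Norm_{F/\Q}(\ell_{\p})\le\#\tilde{E}(k_{\p})\le\Norm_{K/\Q}(\p)+1+2\sqrt{\Norm_{K/\Q}(\p)}\ll\Norm_{K/\Q}(\p).
\]
The paper combines this with the per-prime count $B/\Norm_{K/\Q}(\ell_{\p}\p)$ and with $\Norm_{K/\Q}(\ell_{\p})=\Norm_{F/\Q}(\ell_{\p})^{[K\colon F]}$. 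This gives $B\sum_{n\ge\log B}n^{-[K\colon F]-1}\ll B(\log B)^{-[K\colon F]}/[K\colon F]$. That integral bound is where both the exponent and the prefactor $1/[K\colon F]$ come from, not densities of ideals.

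The last part of your Step 2 splices that normalization onto your own, namely lattice points of $\mathfrak{c}_{\p}\subseteq\OO$ with $\Norm_{F/\Q}(\alpha)\le B^{1/[K\colon F]}$. The two are not compatible, so pick one. In your normalization the lemma is trivial for $[K\colon F]\ge 2$, because $N_1(B)\ll B^{1/[K\colon F]}$ outright. For $K=F$ your argument works once Hasse replaces (\ref{isomorphism module}) and one $\p$ is kept per $\ell$. Then $[\OO\colon\mathfrak{c}_{\p}]\gg\Norm(\ell_{\p})\Norm(\p)\gg\Norm(\ell_{\p})^2$, and $\sum_{\Norm(\ell)\ge\log B}B/\Norm(\ell)^2\ll B/\log B$.
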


\begin{proof}
By definition, the set can be bounded from above by
\begin{align*}
N_{1}(B)
&\ll \sum_{\log B \leq \Norm(\ell_{\p}) \leq B} \#\{\Norm(\alpha) \leq B \colon \alpha \equiv 0 \bmod (\ell_{\p}\p)\OO_{K}\} \\
&\ll \sum_{\substack{\log B \leq \Norm(\ell) \leq B \\ \ell \in \Lambda}} \frac{B}{\Norm_{K/\Q}(\ell_\p\p)},
\end{align*}
where the bound follows from the fact that we have specified one residue class in $\left(\OO_K/\ell_{\p}\p\OO_K\right)^{\times}$. Note that here we have interpreted $\ell_{\p}$ as an element in $\OO_K$ rather than in $\OO$. 

Denote $\tilde{E}$ to be $E/K$ reduction modulo $\p$. Since $E/K$ has complex multiplication by $\OO$, there is an isomorphism 
\[
\End_{\C}(E) \xrightarrow{\sim} \End{\overline{\F_\p}}(\tilde{E}),
\]
where $\overline{F_\p}$ denotes the algebraic closure of the residue field $\OO_K/\p$, and there exists a prime $\eta \in \OO$ such that $\eta \xmapsto{\sim} \Frob_\p \in \End{\overline{F_\p}}(\tilde{E})$.

By \cite[Theorem V.1.1]{Silverman1},
\begin{align*}
\#\tilde{E}(\F_{\p})
&=\Deg\left(\mathbbm{1}-\Frob_{\p}\right) \\
&=\Deg(1-\eta) \\
&=\Norm(1-\eta).
\end{align*}

i.e. $[1-\eta]P \equiv O \bmod \p$. Since $\ell_\p = \Ann(P \bmod \p)$, we must have $1-\eta \in \ell_p\OO_K$, therefore $\Norm_{F/\Q}(1-\eta) \geq \Norm_{F/\Q}(\ell_\p)$. Using Hasse's bound, we obtain 
\[
\Norm_{F/\Q}(\ell_\p)\leq \Norm_{F/\Q}(1-\eta)=\#\tilde{E}(\overline{\F_{\p}})\leq \Norm_{K/\Q}(\p)+1+2\sqrt{\Norm_{K/\Q}(\p)} \ll \Norm_{K/\Q}(\p),
\]
which then iimplies $\frac{1}{\Norm_{K/\Q}(\p)} \ll \frac{1}{\Norm_{F/\Q}(\ell_\p)}$. So overall, this yields
\begin{align*}
N_{1}(B)
&\ll \sum_{\substack{\log B \leq \Norm(\ell) \leq B \\ \ell \in \Lambda}} \frac{B}{\Norm_{K/\Q}(\ell_\p)\Norm_{K/\Q}(\p)} \\
&\ll \sum_{\substack{\log B \leq \Norm(\ell) \leq B \\ \ell \in \Lambda}} \frac{B}{\Norm_{F/\Q}(\ell_\p)\Norm_{K/\Q}(\ell_{\p})} \\
&\ll B\sum_{\substack{\log B \leq \Norm(\ell) \leq B \\ \ell \in \Lambda}} \frac{1}{\Norm_{F/\Q}(\ell_\p)^{[K\colon F]+1}} \\
&\ll B\sum_{n \geq \log B} \frac{1}{n^{[K\colon F]+1}} \\
&\ll \frac{1}{[K\colon F]}\frac{B}{\left(\log B\right)^{[K\colon F]}},
\end{align*}
where the final estimate follows from extending the sum over all elements with norm at least $\log B$ and the use of integral bound.
\qedhere
\end{proof}

\begin{lemma}
An upper bound for $N_{0}(B)$ is 
\[
 \prod_{\substack{\log B \leq \Norm_{F/\Q}(\q) \leq B \\ \ell \in \Lambda, \q \cap \OO=\ell}}
\left(1-\frac{1}{\Norm_{F/\Q}(\q)}\right)
\ll \frac{B\log|\Delta_{F}| \log\log B}{\left(\log B\right)^{\left(1/(\OO/\pi\OO)^{\times}\right)\left(1/h(\OO)\right)}}\exp(\#S_1+\#S_2+\#T),
\]
\end{lemma}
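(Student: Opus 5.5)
The plan is a standard sieve upper bound: a large-sieve (or Selberg/Brun-type) inequality over the lattice $\OO$, followed by Mertens-type evaluation of the resulting Euler product using \Cref{Lambda size}. The statement as written has $B$ missing on the left. I read it as
\[
N_0(B) \ll B\prod_{\substack{\log B\le \Norm(\q)\le B\\ \q\cap\OO=\ell\in\Lambda}}\left(1-\frac{1}{\Norm(\q)}\right),
\]
together with a bound for this product by the right-hand side.

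\textbf{Sieve step.} Consider the elements $\alpha\in\OO$ with $\Norm_{F/\Q}(\alpha)\le B$, i.e.\ lattice points of $\OO\cong\Z^2$ in a disc of area $\asymp B$. For each $\ell\in\Lambda$ with $\log B\le\Norm(\ell)\le B$, remove the single residue class $0\bmod\ell$. This is one class out of $\Norm(\ell)$.

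I would apply the two-dimensional large sieve over number fields (Huxley's or Wilson's version for $\OO_F$, transported to $\OO$ since $\ell$ is prime to $f$). Sifting by primes of norm up to $Q=B^{1/2}$ gives
\[
N_0\ll \frac{B+Q^2}{\sum_{\Norm(\mathfrak d)\le Q}\mu^2(\mathfrak d)\prod_{\q\mid\mathfrak d}\frac{1}{\Norm(\q)-1}}.
\]
The denominator is $\gg\prod_{\q}(1-1/\Norm\q)^{-1}$ up to constant factors. For primes with $Q<\Norm(\ell)\le B$ I simply discard the sieve condition; this changes the product only by a bounded factor, because $\sum_{Q<\Norm\q\le B}1/\Norm\q=O(1)$. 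Alternatively, a direct Brun-pure or Selberg upper bound sieve suffices, since only the exponent of $\log B$ matters. In either form, the ideals $\q$ of $\OO_F$ over $\ell$ give the local densities $1/\Norm(\q)$. The $\log|\Delta_F|$ factor comes from the lattice-point error in counting elements of bounded norm in $\OO$.

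\textbf{Product step.} Take logarithms: $\log\prod(1-1/\Norm\q)\le-\sum 1/\Norm(\ell)$ over $\ell\in\Lambda$ in the range. I would evaluate this sum by partial summation against the counting function $\pi_\Lambda(X)$. By \Cref{Lambda size},
\[
\pi_\Lambda(X)\ge c\,\frac{X}{\log X}-(\#S_1+\#S_2+\#T),\qquad c=\frac{1}{\#(\OO/\pi\OO)^\times\,h(\OO)}.
\]
The main term gives $c\,(\log\log B-\log\log\log B)+O(1)$, hence a saving of $(\log B)^{-c}$ times a factor $(\log\log B)^{c}\le\log\log B$. The subtracted constant contributes at most $\sum_{\text{exceptional}}1/\Norm(\ell)\le\#S_1+\#S_2+\#T$. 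Exponentiating yields the factor $\exp(\#S_1+\#S_2+\#T)$ and the stated bound.

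\textbf{Main obstacle.} The delicate point is that \Cref{Lambda size} (via \Cref{Lambda estimate}) is only an asymptotic lower bound. I therefore need a version uniform enough to run partial summation down to $X=\log B$. I would fix $X_0$ beyond which $\pi_\Lambda(X)\ge (c-\varepsilon)X/\log X$, and absorb $[\log B,X_0]$ into the constant; this is likely the source of the $\varepsilon$ loss in \Cref{Main ideal norm result}.

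A second concern is that $\#T$ depends on $P$. Bounding $\#T\le\#\{\p:\nu_\p(P)>0\}\ll \widehat h(P)/\log\widehat h(P)$ explains the shape of the implicit constant in the main theorem. That bound is not needed for this lemma; here it only has to be carried along explicitly.
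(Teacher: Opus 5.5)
Your proposal is correct and is essentially the paper's argument. A number-field sieve bound (the paper cites Debaene's Selberg sieve over $\OO_F$; your large sieve over $\OO$ works equally well) reduces $N_0(B)$ to $B$ times the Euler product over $\Lambda$ in $[\log B,\sqrt{B}]$. That product is then bounded by partial summation against \Cref{Lambda size}, with the subtracted constant producing $\exp(\#S_1+\#S_2+\#T)$.

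The differences are minor, and there are two of them:
\begin{itemize}
\item The paper removes the primes below $\log B$ by completing the product and invoking Mertens' theorem for $F$. That step, not lattice-point errors, is where its factor $\log|\Delta_F|\log\log B$ comes from. You instead start the partial summation at $\log B$, which gives the slightly better factor $(\log\log B)^{c}$ with $c=1/(\#(\OO/\pi\OO)^{\times}h(\OO))$.
\item You observe that \Cref{Lambda size} is only asymptotic, so your $X_0$ device gives exponent $c-\varepsilon$ rather than $c$. The paper glosses over this point. The loss can be removed by using the prime ideal theorem in the relevant classes with an error term $O(X/\log^2 X)$: the error integral then converges, yielding exactly $c\log\log B+O(1)$ and hence the exponent $c$ as stated.
\end{itemize}
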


\begin{proof}
The proof proceeds in the exact same way as the original proof in \cite[Lemma 4.5]{Brauer}. Since we have assumed that $\ell$ does not divide the conductor $f=[\OO_{F}\colon \OO]$, we always have $\ell\OO_{F}=\q$ for some prime ideal $\q$ in $\OO_F$ of the same norm (see \cite[Proposition 7.20]{Cox}). Therefore, we can apply Selberg sieve for number fields to the primes in $\OO_F$. Using the version in \cite[Theorem 14, Lemmata 15, 16]{Debaene}, this gives us 
\begin{align*}
N_{0}(B) 
& \leq \frac{2\pi B}{\sqrt{\Delta_{F}}}\prod_{\substack{\log B \leq \Norm_{F/\Q}(\q) \leq \sqrt{B} \\ \ell \in \Lambda, \q \cap \OO=\ell}} \left(1-\frac{1}{\Norm_{F/\Q}(\q)}\right) + O\left(B(\log B)^6\right) \\
&\ll B \prod_{\substack{\log B \leq \Norm_{F/\Q}(\q) \leq \sqrt{B} \\ \ell \in \Lambda, \q \cap \OO=\ell}}
\left(1-\frac{1}{\Norm_{F/\Q}(\q)}\right).
\end{align*}

By Mertens' theorem for number fields (see \cite[Theorem 1 (C1)]{Garcia}), we have
\[
\prod_{\substack{\Norm_{F/\Q}(\q) \leq \log B \\ \ell \in \Lambda, \q \cap \OO=\ell}}
\left(1-\frac{1}{\Norm_{F/\Q}(\q)}\right)^{-1}
\ll 
\prod_{\substack{\Norm_{F/\Q}(\q) \leq \log B}}
\left(1-\frac{1}{\Norm_{F/\Q}(\q)}\right)^{-1}
\ll \log|\Delta_{F}| \log\log B.
\]

Hence, it remains to bound
\[
\prod_{\substack{\Norm_{F/\Q}(\q) \leq \sqrt{B} \\ \ell \in \Lambda, \q \cap \OO=\ell}}
\left(1-\frac{1}{\Norm_{F/\Q}(\q)}\right)
\]
from above. To do so, notice that
\[
\begin{aligned}
\log \prod_{\substack{\Norm_{F/\Q}(\q) \leq \sqrt{B} \\ \ell \in \Lambda, \q \cap \OO=\ell}}
\left(1-\frac{1}{\Norm_{F/\Q}(\q)}\right)^{-1}
&= -\sum_{\substack{\Norm_{F/\Q}(\ell) \leq \sqrt{B} \\ \ell \in \Lambda, \q \cap \OO=\ell}}
\log\left(1-\frac{1}{\Norm_{F/\Q}(\ell)}\right) \\
&\geq \sum_{\substack{\Norm_{F/\Q}(\q) \leq \sqrt{B} \\ \ell \in \Lambda, \q \cap \OO=\ell}}
\frac{1}{\Norm_{F/\Q}(\q)} 
\end{aligned}
\]

Provided that $\Norm_{F/\Q}(\q)=p$, or equivalently $p$ splits in $F$, then the sum will diverge and giving us a non-trivial bound. By partial summation, this is 
\[
 \sum_{\substack{\Norm_{F/\Q}(\q) \leq \sqrt{B} \\ \q \cap \OO = \ell,\ \ell \in \Lambda}}
\frac{1}{\sqrt{B}}+\int_{1}^{\sqrt{B}} \sum_{\substack{\Norm_{F/\Q}(\q) \leq u \\ \q \cap \OO = \ell,\ \ell \in \Lambda}}
\frac{1}{u^2} du.
\]

By Chebotarev Density Theorem, the elements of $\Lambda$ that correspond to rational primes that split completely in $F$ has density $1/2$, so \Cref{Lambda size} shows that 
\[
\sum_{\substack{\Norm_{F/\Q}(\q) \leq \sqrt{B} \\ \q \cap \OO = \ell,\ \ell \in \Lambda}}
\frac{1}{\sqrt{B}}=\frac{1}{\#(\OO/\pi\OO)^{\times}}\cdot\frac{1}{h(\OO)}\frac{2}{\log B}-\frac{\#S_1+\#S_2+\#T}{\sqrt{B}}, \text{ and }
\]
\begin{align*}
\int_{1}^{\sqrt{B}} \sum_{\substack{\Norm_{F/\Q}(\q) \leq u \\ \q \cap \OO = \ell,\ \ell \in \Lambda}}
\frac{1}{u^2} du
&=\frac{1}{\#(\OO/\pi\OO)^{\times}}\cdot\frac{1}{h(\OO)}\int_{1}^{\sqrt{B}} \frac{1}{u\log u} du - (\#S_1+\#S_2+\#T)\int_{1}^{\sqrt{B}} \frac{1}{u^2}du \\
&=\frac{1}{\#(\OO/\pi\OO)^{\times}}\cdot\frac{1}{h(\OO)}\log\log(\sqrt{B})+(\#S_1+\#S_2+\#T)\left(1-\frac{1}{\sqrt{B}}\right) \\
&=\frac{1}{\#(\OO/\pi\OO)^{\times}}\cdot\frac{1}{h(\OO)}\left(\log\log(B)+O(1)\right)-(\#S_1+\#S_2+\#T)\left(1-\frac{1}{\sqrt{B}}\right).
\end{align*}

Overall, we have 
\[
\log \prod_{\substack{\Norm_{F/\Q}(\q) \leq \sqrt{B} \\ \ell \in \Lambda, \q \cap \OO=\ell}}
\left(1-\frac{1}{\Norm_{F/\Q}(\q)}\right)^{-1}
\geq \frac{1}{\#(\OO/\pi\OO)^{\times}}\cdot\frac{1}{h(\OO)}\left(\log\log(B)+O(1)\right)-(\#S_1+\#S_2+\#T).
\]

Exponentiating and take reciprocals give 
\[
 \prod_{\substack{\log B \leq \Norm_{F/\Q}(\q) \leq B \\ \ell \in \Lambda, \q \cap \OO=\ell}}
\left(1-\frac{1}{\Norm_{F/\Q}(\q)}\right)
\ll \frac{B\log|\Delta_{F}| \log\log B}{\left(\log B\right)^{\left(1/\#(\OO/\pi\OO)^{\times}\right)\left(1/h(\OO)\right)}}\exp(\#S_1+\#S_2+\#T).
\]
and hence the result.
\end{proof}

\begin{proof}[Proof of \Cref{Main ideal norm result}]
Let $L/K$ be a finite abelian extension. If $y([\alpha]P) \in K$ is an ideal norm in $L$, then $y([\alpha]P) \in T_{L/K}^{\m}$, where the conductor of the extension $\f_{L/K}$ divides the modulus $\m$. Let $\chi$ be a group character as described in (\ref{character}). As stated in the hypotheses of the theorem, we assume the existence of $\alpha_0 \in \OO$ with $\alpha_0 \in (\OO/\pi\OO)^{\times}$ such that $\chi(D_{\alpha_{0}})\not\in\{0, 1\}$. Hence, we may apply \Cref{Lambda estimate} the previous lemmata in this section, giving us the estimate
\begin{align*}
&\#\{\alpha \in \OO\colon |\alpha|\leq B, y(\alpha P) \text{ is an ideal norm from } L\} \\
&\ll N_0(B)+N_1(B)\\
&\ll \frac{1}{[K\colon F]}\frac{B}{\left(\log B\right)^{[K\colon F]}}+\frac{B\log|\Delta_{F}| \log\log B}{\left(\log B\right)^{\left(1/\#(\OO/\pi\OO)^{\times}\right)\left(1/h(\OO)\right)}}\exp(\#S_1+\#S_2+\#T).
\end{align*}

Note that $\# S_1 = O_L(1)$, $\# S_2 = O_E(1)$, and $\log|\Delta_F| = O_E(1)$ (since the choice of curve determines the endomorphism ring, and hence the imaginary quadratic field $F$). In addition, it is obvious that
\[
\frac{1}{\#(\OO/\pi\OO)^{\times}}\frac{1}{h(\OO)} \leq 1 \leq [K\colon F],
\]
therefore the first term gets dominated by the second term. Our estimate hence becomes
\[
\#\{\alpha \in \OO\colon |\alpha|\leq B, y(\alpha P) \text{ is a norm from } L\} 
\ll_{E, L}\frac{B \log\log B}{\left(\log B\right)^{\left(1/\#(\OO/\pi\OO)^{\times}\right)\left(1/h(\OO)\right)}}\exp(\#T).
\]

To get the final bound in the theorem, which depends explicitly on $P$, we proceed to show that
\begin{equation}\label{size of T}
\#T\ll_{E} \frac{\hat{h}(P)}{\log\hat{h}(P)},
\end{equation}
whose proof is almost identical to that of \cite[Lemma 4.6]{Brauer}, except that now we use the ideal version for the estimates.
If $\nu_{\p}(P)>0$, then $\p$ is a factor of $D_1$. Therefore, we have 
\[
\#T=\omega\left(\prod_{\nu_{\p}(P)>0}\p\right)=\omega(D_{1}),
\]
where $\omega(\A)$ denotes the number of distinct prime ideal factors of the ideal $\A$. 

By definition, we have $\Norm(D_{1})\leq h(x(P))$, where $h(\cdot)$ is the naive height function. Theorem 1.1 of \cite{Silverman5} implies that for all $P \in E(K)$, 
\[
\left|\hat{h}(P)-\frac{1}{2}h(x(P))\right|<2h(E),
\]
therefore $\log\Norm(D_{1})\leq\hat{h}(P)+O_{E}(1)$. $\omega(D_{1})$ can be bounded by 
\[
\omega(D_{1}) \ll \frac{\log\Norm(D_{1})}{\log\log\Norm(D_{1})},
\]
and since $\hat{h}(P)\gg_{E} 1$ by \cite[Theorem VIII.9.10(a)]{Silverman1}, this completes the proof of (\ref{size of T}). 

Finally, recall that $\pi(\m)$ is an upper bound for the period of the sequence $(D_{\alpha})_{\alpha \in \OO}$ (resp.\ $(B_{\alpha})_{\alpha \in \OO}$) in the group $I_{K}^{\m}/T_{L/K}^{\m}$ (resp.\ modulo $\m$). Let $\p_1^{k_1}\ldots\p_n^{k_n}$ be the unique ideal factorisation of $\m$, then from \Cref{PID periodicity} and \Cref{Non PID periodicity} we have 
\[
\pi(\m)=\begin{cases}
			\phi(\m)\prod_{\p \mid \m}M \Ord \left([M]P \bmod \p^{k+\nu_{\p}([M]P)}\right), & \text{if $\Cl(K)=1$}\\
            \bigcap\limits_{i=1}^{n}\left(h^{\p_i^{k_i}}(K)\right)M \Ord \left([M]P \bmod \p_i^{k_i+\nu_{\p}([M]P)}\right), & \text{otherwise},
		 \end{cases}
\]
where $M$ is an element in the ideal $\mathfrak{M}(P)$ as described in \Cref{CM EDS}. Note that we have the bound
\[
 \Ord \left([M]P \bmod \p^{k+\nu_{\p}([M]P)}\right) \leq |E_{k+\nu_{\p}([M]P)+1}(K_{\p})/E_{k+\nu_{\p}([M]P)}(K_{\p})|=\Norm(\p),
\]
and by definition the size of $M$ can be bounded by $\prod_{\p}|E(K_{\p}/E_{0}(K_{\p})|$, uniformly with respect to $P$. Therefore, in both cases we can bound the size of $\pi$ by $O_{E, L}(1)$. 

It is also obvious that $h(\OO)=O_{E}(1)$. Hence, our final estimate is 
\[
\#\{\alpha \in \OO\colon |\alpha|\leq B, y(\alpha P) \text{ is an ideal norm from } L\} 
\ll_{E, P, L}\frac{B}{\left(\log B\right)^{\omega-\varepsilon}}
\]
for any $\varepsilon >0$, where $\omega=\omega(E, L)>0$, and the implicit constant is $\exp\left(O_{E, L}\left(\hat{h}(P)/\log\hat{h}(P)\right)\right)$, completing the proof. 
\end{proof}
\section{Worked examples}\label{sieve example}
Recall that our main theorem contains a technical condition concerning the existence of a group character and a denominator ideal satisfying $\chi(D_{\alpha})\notin \{0, 1\}$. This assumption is necessary for the conclusion to hold. In this section, we illustrate our main theorem with three examples. The first two consider the elliptic curve $E \colon y^2 = x^3 - 2x$, which has complex multiplication by $\mathbb{Z}[i]$, defined over number fields with different class numbers. The third example demonstrates that if the hypotheses of \Cref{Main ideal norm result} are not satisfied, then the conclusion could fail. In this section, we abbreviate elliptic denominator sequences as EDS for simplicity. 

\begin{example}\label{example Q(i)}
In this example, we consider the elliptic curve 
\[
E/\Q(i)\colon y^2 = x^3 - 2x,
\]
This curve has complex multiplication by $\OO=\Z[i]$ and the Mordell--Weil group $\Z^2 \oplus \Z/2\Z$, with $E(\Q(i))_{\text{tors}}=\{O, (0, 0)\}$ and generators $P_1=(-1,1)$ and $P_2=(i-1,-2)$ (see the corresponding \href{https://www.lmfdb.org/EllipticCurve/2.0.4.1/4096.1/CMd/1}{LMFDB entry} for more details). The EDS associated to $P_1$, starting at $B_0$, is listed in the second column of \Cref{Table EDS}.

We now apply the main theorem to $L/\Q(i)$, where $L$ is the ray class field of $\Q(i)$ with modulus $\m=(1+i)^4=\p^4=(4)$, and therefore $T_{L/K}^{\m}=P_K^{\m}$. A SageMath computation yields $\#\Cl_{\Q(i)}^{\m}=[L\colon \Q(i)]=2$. Since $L$ is a quadratic extension of $\Q(i)$ with $1+i$ being the only ramified prime, LMFDB allows us to identify $L$ as $\Q(\zeta_8)$, the $8$--th cyclotomic field (see \href{https://www.lmfdb.org/NumberField/?degree=4&ram_quantifier=exactly&ram_primes=2&ur_primes=3&subfield=x\%5E2\%2B1&field_is=ab&cm_field=yes}{LMFDB search result}). 

Furthermore, as $\#\Cl_{\Q(i)}^{\m}=2$, we deduce that
\[
\Cl_{\Q(i)}^{\m}=\{[1], [1+2i]\}\cong \left(\OO/\m\OO\right)^{\times}/\{\pm1, \pm i\}.
\]
Next, to find a character $\chi\colon \Cl_{\Q(i)}^{\m}\rightarrow\C^{\times}$ and an $\alpha\in\OO]$ satisfying $(\alpha, \pi(\m))=1$ and $\chi(B_{\alpha})\notin\{0,1\}$, we consider the character table of $\Cl_{\Q(i)}^{\m}$:
\[
\begin{array}{c|cc}
\Cl_{\Q(i)}^{\m} 	& [1] 		& [1+2i] \\ \hline
\chi_0 		& 1 		& 1 \\
\chi_1 		& 1 		& -1
\end{array}
\]
Clearly, only the non--trivial character $\chi_1$ could possibly satisfy the hypothesis of our main theorem. With reference to \Cref{Table EDS} again, we see that $\chi_{1}(B_{2+i})=\chi_1([2-i])=-1$. Note that $P_1$ has everywhere good reduction, so we may take $M=1$ and $k=4$ in \Cref{PID periodicity}, this gives us an upper bound for the period of the EDS: 
\[
\pi(\m)=\phi(\p^4)r(\p^4)=\Nm(\p^3)(\Nm(\p)-1)M \Ord \left([M]P \bmod \p^{2+\nu_{\p}([M]P)}\right)=8(4)=2^5,
\]
where $\Ord \left([M]P \bmod \p^{2+\nu_{\p}([M]P)}\right)=4$ as observed from \Cref{Table EDS}. Therefore, $\alpha=2+i$ is a valid choice. Since all hypotheses are satisfied, \Cref{Main ideal norm result} shows that 
\[
\#\left\{
\alpha\in \Z[i] \colon |\alpha|\leq B,\;
y(\alpha P_1)\text{ is an ideal norm from } L
\right\}
\ll_{\varepsilon}
\frac{B}{(\log B)^{\frac{1}{2048}-\varepsilon}}.
\]
\end{example}

\begin{example} \label{sieve example 2}
As previously mentioned, in this example we consider the elliptic curve 
\[
E/\Q(\theta)\colon y^2 = x^3 - 2x,
\]
where $\theta=(1+i)\sqrt{-\frac{5}{2}}$ has minimal polynomial $\theta^4+25$ (one may also verify that $\Q(\theta)=\Q(i,\sqrt{-10})$). SageMath computations show that $E/\Q(\theta)$ also has rank $2$. Since $\Q(i)\subset\Q(\theta)$, $P_1 \in E(\Q(\theta))$ and we may choose $P_1=(-1, 1)$ and $P_2=(i-1,-2)$ as the generators of the free part, as well as considering the EDS generated by $P_1$ again as in \Cref{example Q(i)}. However, note that $\Q(\theta)$ has class number $2$, as verified by SageMath, so we need to consider the periodicity of ray classes corresponding to the denominator ideals $D_{\alpha}$ instead. The EDS associated to $P_1$, starting at $D_0$, is listed in \Cref{Table EDS}.

Similar to \Cref{example Q(i)}, we wish to apply the main theorem to $L/\Q(\theta)$, where $L$ is the ray class field for the modulus $\m=\theta+i-1$, a prime lying above $3$. SageMath computations show that $\#\Cl_{\Q(\theta)}^{\m}=[L\colon \Q(\theta)]=4$, which by \Cref{Non PID periodicity}, also bounds the period of the ray classes corresponding to $D_{\alpha}$. To find a suitable ray class group character, we need to compute the ray class group explicitly. To do so, note that the primes $2\pm i \in \Z[i]$ ramify in $\Q(\theta)$:
\[
(5)=(2+i)(2-i)=\left(5, \sqrt{10}+2i-1\right)^2\left(5, 2(\sqrt{-10}+i)+1\right)^2=\q_1^2\q_2^2. 
\]
Since $3\in \m$, we may deduce that $\q_1^2\equiv 2+i  \equiv i-1 \not\equiv 1 \bmod \m$ and conclude that $\Cl_{\Q(\theta)}^{\m}\cong C_4$, which has the character table below. 
\[
\begin{array}{c|cccc}
\Cl_{\Q(\theta)}^{\m}	& [1] 	& [\q_1] 	& [\q_1] ^2 	& [\q_1] ^3 \\ \hline
\chi_0 				& 1 		& 1 		& 1 			& 1 \\
\chi_1 				& 1 		& i 		& -1 			& -i \\
\chi_2 				& 1 		& -1 		& 1 			& -1 \\
\chi_3 				& 1 		& -i 		& -1 			& i
\end{array}
\]
It remains to choose a suitable denominator ideal $D_{\alpha}$ and character $\chi$ such that $(\alpha, \pi(\m))=1$ and $\chi(D_{\alpha})\notin\{0,1\}$. Here, we use \Cref{Non PID periodicity} and recall that $P$ has good reduction everywhere, so we may take $M$=1 and yield 
\[
\pi(\m)=h^{\m}(\Q(\theta))r(\m)=4\Ord(P \bmod \m)=16.
\]
Hence, we may choose $\alpha=2+i$. However, in this case, only $\chi_1$ and $\chi_3$ satisfy the hypothesis of the main theorem, as $\chi_0(D_{2+i})=\chi_2(D_{2+i})=1$. Overall, all hypotheses are satisfied so \Cref{Main ideal norm result} shows that
\[
\#\left\{
\alpha\in \Z[i] \colon |\alpha|\leq B,\;
y(\alpha P_1)\text{ is an ideal norm from } L
\right\}
\ll_{\varepsilon}
\frac{B}{(\log B)^{\frac{1}{8}-\varepsilon}}.
\]
\end{example}

\renewcommand{\arraystretch}{1.5}
\begin{xltabular}{\linewidth}{c|c|>{\centering\arraybackslash}X}
\hline
$\alpha \in \mathbb{Z}[i]$
& $B_{\alpha}$ ($E$ over $\Q(i)$)
& $D_{\alpha}$ ($E$ over $\Q(\theta)$) \\
\hline
\endfirsthead

\hline
$\alpha \in \mathbb{Z}[i]$
& $B_{\alpha}$ ($E$ over $\Q(i)$)
& $D_{\alpha}$ ($E$ over $\Q(\theta)$) \\
\hline
\endhead

\hline
\multicolumn{3}{r}{\textit{Continued on next page}} \\
\endfoot

\hline
\caption{EDS associated with $E \colon y^2 = x^3 - 2x$ and the point $P=(-1, 1)$, indexed by $\mathcal{O} = \mathbb{Z}[i]$, where $E$ is defined over $\mathbb{Q}(i)$ and $\Q(\theta)$ in columns $2$ and $3$ respectively. Note that column $2$ shows a sequence of elements, while column $3$ shows a sequence of ideals. All entries are either taken from \cite[Table 1]{Streng} or computed by SageMath.}
\label{Table EDS}
\endlastfoot

$0$ 
& $0$ 
& $0$ \\	

$1$ 
& $1$ 
& $1$ \\	

$i$ 
& $1$ 
& $1$ \\	

$1+i$
& $1+i$
& $\left(2, \frac{\sqrt{-10}}{2}(i+1) +1\right)^2$ \\

$1-i$ 
& $1+i$
& $\left(2, \frac{\sqrt{-10}}{2}(i+1) +1\right)^2$ \\

$1+2i$ 
& $2+i$
& $\left(5, \sqrt{10}+2i-1\right)^2$ \\

$1-2i$ 
& $2-i$
& $\left(5, 2(\sqrt{-10}+i)+1\right)^2$ \\        

$2$ 
& $2$
& $\left(2, \frac{\sqrt{-10}}{2}(i+1) +1\right)^4$ \\

$2i$ 
& $2$
& $\left(2, \frac{\sqrt{-10}}{2}(i+1) +1\right)^4$ \\    	

$2+i$ 
& $2-i$
& $\left(5, 2(\sqrt{-10}+i)+1\right)^2$ \\

$2-i$ 
& $2+i$
& $\left(5, \sqrt{10}+2i-1\right)^2$  \\

$2+2i$ 
& $3(1+i)^{3}$
& $\left(2, \frac{\sqrt{-10}}{2}(i+1) +1\right)^6
   \left(\frac{\sqrt{-10}}{2}(i-1)+i+1\right)$
   \newline
   $\left(\frac{\sqrt{-10}}{2}(i+1)+i-1\right)$ \\

$2-2i$ 
& $3(1+i)^{3}$
& $\left(2, \frac{\sqrt{-10}}{2}(i+1) +1\right)^6
   \left(\frac{\sqrt{-10}}{2}(i-1)+i+1\right)$
   \newline
   $\left(\frac{\sqrt{-10}}{2}(i+1)+i-1\right)$ \\

$3$ 
& $(3+2i)(3-2i)$
& $\left(13, \frac{\sqrt{-10}}{2}(i+1)+1\right)
   \left(13, \frac{\sqrt{-10}}{2}(i+1)-1\right)$
   \newline
   $\left(13, \frac{\sqrt{-10}}{2}(i+1)+5\right)
   \left(13, \frac{\sqrt{-10}}{2}(i+1)-5\right)$ \\

$3+i$ 
& $(1+i)(2+i)(4-i)$
& $(4-i)\left(2,\frac{\sqrt{-10}}{2}(i+1)+1\right)^2$
   \newline
   $\left(5,\sqrt{10}+2i-1\right)^2$ \\

$3-i$ 
& $(1+i)(2+i)(4-i)$
& $(4-i)\left(2,\frac{\sqrt{-10}}{2}(i+1)+1\right)^2$
   \newline
   $\left(5,\sqrt{10}+2i-1\right)^2$ \\

$3+2i$
& $(5+4i)(6-i)$
& $\left(\frac{\sqrt{-10}}{2}(i+1)+2i\right)
   \left(\frac{\sqrt{-10}}{2}(i+1)-2i\right)$
   \newline
   $\left(37,\frac{\sqrt{-10}}{2}(i+1)+9\right)
   \left(37,\frac{\sqrt{-10}}{2}(i+1)-9\right)$ \\

$3+3i$
& $(1+i)(3+2i)(3-2i)$
& $\left(2,\frac{\sqrt{-10}}{2}(i+1)+1\right)^2
   \left(13,\frac{\sqrt{-10}}{2}(i+1)+1\right)$
   \newline
   $\left(13,\frac{\sqrt{-10}}{2}(i+1)-1\right)
   \left(13,\frac{\sqrt{-10}}{2}(i+1)+5\right)$
   \newline
   $\left(13,\frac{\sqrt{-10}}{2}(i+1)-5\right)$ \\

$4$
& $(1+i)^4(3)(7)$
& $\left(2,\frac{\sqrt{-10}}{2}(1+i)+1\right)^8
   \left(\frac{\sqrt{-10}}{2}(i-1)+i+1\right)$
   \newline
   $\left(\frac{\sqrt{-10}}{2}(i+1)+i-1\right)
   \left(\frac{\sqrt{-10}}{2}(i+1)+i+1\right)$
   \newline
   $\left(\frac{\sqrt{-10}}{2}(i+1)-i-1\right)$ \\

$4+i$
& $(5-2i)(14-i)$
& $(5-2i)
   \left(197,\frac{\sqrt{-10}}{2}(1+i)+18\right)$
   \newline
   $\left(197,\frac{\sqrt{-10}}{2}(1+i)-18\right)$ \\

$4+2i$
& $(i+1)^2(4+i)(2-i)(16+9i)$
& $\left(2,\frac{\sqrt{-10}}{2}(1+i)+1\right)^4
   \left(5,2\sqrt{-10}+i-2\right)^2$
\newline
$(4-i)(-9i-16)$ \\

$4+3i$
& $(2+i)(14-9i)(32+23i)$
& $\left(5,-i\sqrt{-10}-2i+1\right)^2
   \left(277,\frac{\sqrt{-10}}{2}(1+i)+85\right)$
\newline
$\left(277,\frac{\sqrt{-10}}{2}(1+i)-85\right)
   (-32i+23)$ \\

$4+4i$
& $(1+i)^5(3)(7)(8+7i)(8-7i)$
& $\left(2,\frac{\sqrt{-10}}{2}(1+i)+1\right)^{10}
   \left(\frac{\sqrt{-10}}{2}(i-1)+i+1\right)$
\newline
$\left(\frac{\sqrt{-10}}{2}(1+i)+i-1\right)
   \left(\frac{\sqrt{-10}}{2}(1+i)+i+1\right)$
\newline
$\left(\frac{\sqrt{-10}}{2}(1+i)-i-1\right)
   (8i-7)(-7i+8)$ \\
$\vdots$
& $\vdots$
& $\vdots$
\end{xltabular}

\begin{example}\label{sieve example 3}
In this example, we consider the elliptic curve 
\[
E/\Q(\sqrt{-2})\colon y^2=x^3+x^2-3x+1.
\]
This curve has complex multiplication by $\OO=\Z[\sqrt{-2}]$ and and the Mordell--Weil group $\Z^2 \oplus \Z/2\Z$. The discriminant is $\Delta=2^9=-(\sqrt{-2})^{18}$, so the only prime of bad reduction is $\p=\sqrt{-2}$ (see the corresponding \href{https://www.lmfdb.org/EllipticCurve/2.0.8.1/1024.1/CMa/1}{LMFDB entry} for more details). We now consider the EDS associated to the point $P=(0, -1)$ with 
\[
[\sqrt{-2}]P=\left(\frac{1}{2}, \frac{1}{4\sqrt{-2}}\right), \text{ where } [\sqrt{-2}](x, y)=\left( \frac{-x^2 - 1}{2(x - 1)}, \frac{\sqrt{-2}y(x^2 - 2x - 1)}{4(x - 1)^2} \right), 
\]
 where we have listed some of the values of the EDS in \Cref{Table EDS sqrt-2} (see \cite[Chapter II, Proposition 2.3.1]{Silverman2} for the derivation of the formula for $[\sqrt{-2}]$) . Note that the points $P$ and $[\sqrt{-2}]P$ are $\Z$--linearly independent, so they may be taken as generators of the free part of the Mordell-Weil group. 

We would like to apply the main theorem to this choice of $E$ and $P$ with the extension $L/\Q(\sqrt{-2})$, where $L$ is the ray class field of $\Q(\sqrt{-2})$ for the modulus $\m=\p^2$. A SageMath computation yields $\#\Cl_{\Q(\sqrt{-2})}^{\m}=[L\colon \Q(i)]=2$. Since $L$ is a quadratic extension of $\Q(\sqrt{-2})$ with $\sqrt{-2}$ being the only ramified prime, LMFDB allows us to identify $L$ as $\Q(\zeta_8)$, the $8$--th cyclotomic field (see \href{https://www.lmfdb.org/NumberField/?degree=4&ram_quantifier=exactly&ram_primes=2&subfield=x\%5E2\%2B2&field_is=ab}{LMFDB search result}). 

Since $\Cl_{\Q(\sqrt{-2})}^{\m} \cong C_2$, so it has the same character table as in \Cref{example Q(i)}. However, we claim that the hypothesis of the main theorem is not satisfied here. To see this, note that $P$ and $[\sqrt{-2}]P$ are both nonsingular modulo $\mathfrak{p}$, so we set $M=1$ and $k=2$ in \Cref{PID periodicity}. Refer to \Cref{Table EDS sqrt-2}, we see that $[2]P = \left(\frac{5}{4}, -\frac{7}{8}\right)$, so we obtain 
\[
\pi(\m)=\phi(\p^2)r(\p^2)=\Nm(\p)(\Nm(\p)-1)M \Ord \left([M]P \bmod \p^{2+\nu_{\p}([M]P)}\right)=4.
\] 
This means that the EDS associated to $P$, $B_{\alpha}$, is periodic modulo $\m$ with period bounded from above by $4$ : for any $\alpha, a+b\sqrt{-2} \in \OO$,
\[
B_{\alpha + (a + b\sqrt{-2})} \equiv B_{\alpha} \bmod \m \text{ whenever } 4 \mid a, b, 
\]
Therefore, once we know all the character values evaluated at $B_{a+b\sqrt{-2}}$ for $0\leq a, b \leq 4$, then the claim can be verified. For the unique nontrivial character, the corresponding values are listed in \Cref{table EDS fail sieve} below, where we have used SageMath to compute the coordinates of $[\alpha]P$ and deduce the entries in \Cref{table EDS fail sieve} (see the relevant code in \Cref{appendix sieve}). 
\begin{table}[H]
\centering
\begin{tabular}{c|ccccc}
4 & 0 & 1 & 0 & 1 & 0 \\
3 & 0 & 1 & 0 & 1 & 0 \\
2 & 0 & 1 & 0 & 1 & 0 \\
1 & 0 & 1 & 0 & 1 & 0 \\
0 & 0 & 1 & 0 & 1 & 0 \\
\hline
$b \backslash a$ & 0 & 1 & 2 & 3 & 4
\end{tabular}
\caption{Values of $\chi(B_{\alpha})$ for $\alpha = a + b\sqrt{-2} \in \mathcal{O}=\Z[\sqrt{-2}]$, where $\chi$ is the unique nontrivial ray class character of $\mathbb{Q}(\sqrt{-2})$ for the modulus $\mathfrak{p}^2 = (2)$ and $B_{\alpha}$ is the EDS associated to $E/\Q(\sqrt{-2})\colon y^2=x^3+x^2-3x+1$ and the point $P=(0, -1)$. The horizontal and vertical axes correspond to $a, b \in \{0, \ldots, 4\}$, respectively.}
\label{table EDS fail sieve}
\end{table}
Observing \Cref{table EDS fail sieve}, one can notice that whenever $2$ divides $a$, the corresponding column consists of zeros only. This should not be a surprise as it follows from the $\OO$--module filtration structure (see \Cref{Streng module}) immediately: $[\sqrt{-2}]P \in E_1\left(\Q(\sqrt{-2})_{\nu}\right)$, yet $P \in E_0\left(\Q(\sqrt{-2})_{\nu}\right)\setminus E_1\left(\Q(\sqrt{-2})_{\nu}\right)$, where $\nu$ is the valuation corresponding to the prime $\p = \sqrt{-2}$. Overall, using the periodicity argument above and \Cref{table EDS fail sieve}, we conclude that for any $\alpha=a+b\sqrt{-2} \in \OO$,
$$
\chi(B_{\alpha})=\begin{cases}
			0, & \text{if $2\mid a \iff \p \mid \alpha$},\\
            1, & \text{otherwise.}
		 \end{cases}
$$
This demonstrates that when the hypothesis of \Cref{Main ideal norm result} is not satisfied, the conclusion could fail. In particular, since $E(\Q(\sqrt{-2})$ is of rank $2$, generated by $P$ and $[\sqrt{-2}]P$, we actually showed that approximately $50\%$ of the points in $E(\Q(\sqrt{-2})$ have $y$--coordinate which are elements in $T_{L/K}^{\m}$. Since our method is unable to determine if an element in $T_{L/K}^{\m}$ is an ideal norm from $L=\Q(\zeta_8)$ or not, our conclusion does not hold. 

\renewcommand{\arraystretch}{1.5}

\begin{xltabular}{\linewidth}{c|>{\centering\arraybackslash}X}

\hline
$\alpha \in \mathbb{Z}[\sqrt{-2}]$ & $B_{\alpha}$ \\
\hline
\endfirsthead

\hline
$\alpha \in \mathbb{Z}[\sqrt{-2}]$ & $B_{\alpha}$ \\
\hline
\endhead

\hline
\multicolumn{2}{r}{\textit{Continued on next page}} \\
\endfoot

\hline
\caption{EDS associated with $E /\Q(\sqrt{-2})\colon y^2 = x^3+x^2-3x+1$ and the point $P=(0, -1)$, indexed by $\alpha \in \OO=\mathbb{Z}[\sqrt{-2}]$, which is a sequence of elements. All entries are computed using Sagemath.}
\label{Table EDS sqrt-2}
\endlastfoot

$0$ 
& $0$\\

$\sqrt{-2}$ 
& $(\sqrt{-2})$\\

$2\sqrt{-2}$ 
& $(\sqrt{-2})^3$\\

$3\sqrt{-2}$ 
& $(\sqrt{-2})(-\sqrt{-2}-1)^2(\sqrt{-2}-1)^2$\\

$4\sqrt{-2}$ 
& $(\sqrt{-2})^5(7)(31)$\\

$1$ 
& $1$\\

$1+\sqrt{-2}$ 
& $1$\\

$1+2\sqrt{-2}$ 
& $(-4\sqrt{-2}-3)$\\

$1+3\sqrt{-2}$ 
& $(16\sqrt{-2}+57)$\\

$1+4\sqrt{-2}$ 
& $(-8\sqrt{-2}-3)(96\sqrt{-2}+91)$\\

$2$ 
& $(\sqrt{-2})^2$\\

$2+\sqrt{-2}$ 
& $(\sqrt{-2})(-\sqrt{-2}-1)$\\

$2+2\sqrt{-2}$ 
& $(\sqrt{-2})^2(\sqrt{-2}-1)(-2\sqrt{-2}+3)$\\

$2+3\sqrt{-2}$ 
& $(\sqrt{-2})(3\sqrt{-2}+1)(-8\sqrt{-2}+3)$\\

$2+4\sqrt{-2}$ 
& $(\sqrt{-2})^2(-\sqrt{-2}-1)(\sqrt{-2}-3)(2\sqrt{-2}+3)(-4\sqrt{-2}-3)(6\sqrt{-2}+13)$\\

$3$ 
& $5$ \\

$3+\sqrt{-2}$ 
& $-8\sqrt{-2}-3$\\

$3+2\sqrt{-2}$ 
& $-12\sqrt{-2}+17$\\

$3+3\sqrt{-2}$ 
& $5(4\sqrt{-2}-3)(-12\sqrt{-2}+5)$\\

$3+4\sqrt{-2}$ 
& $-408\sqrt{-2}-2603$\\

$4$ 
& $(\sqrt{-2})^4(7)$\\

$4+\sqrt{-2}$ 
& $(\sqrt{-2})(\sqrt{-2}-1)(\sqrt{-2}+3)(4\sqrt{-2}-3)$\\

$4+2\sqrt{-2}$ 
& $(\sqrt{-2})^3(-\sqrt{-2}-1)(2\sqrt{-2}+3)(4\sqrt{-2}+9)$\\

$4+3\sqrt{-2}$ 
& $(\sqrt{-2})(-3\sqrt{-2}-5)(\sqrt{-2}-9)(12\sqrt{-2}+17)$\\

$4+4\sqrt{-2}$ 
& $(\sqrt{-2})^4(\sqrt{-2}-1)(7)(-2\sqrt{-2}+3)(-4\sqrt{-2}+9)(8\sqrt{-2}+33)$\\

$\vdots$ 
& $\vdots$

\end{xltabular}
\end{example}

\subsubsection*{Acknowledgements}
I would like to express my gratitude to my supervisors, Dr.\ Simon L.\ Rydin Myerson, and Dr.\ Helena Verill for their incredible support over the past year, along with their insightful comments, and for carefully reading earlier versions of this paper.
\appendix 
\section{SageMath Code}\label{appendix sieve}
For reference, this appendix contains the SageMath code used for the computations in \Cref{sieve example}.

\subsection{Ray class number computation}
The following code computes the ray class numbers needed for \Cref{sieve example}.
\begin{lstlisting}[language=Python]
K.<z> = QuadraticField(-1)

L.<w> = K.extension(x^2 + 10)
# construct an extension over K

M.<r> = QuadraticField(-1)
y = polygen(M)

p = K.primes_above(2)[0]
# Takes the first prime in the list

q = L.primes_above(3)[0]
# Takes the first prime in the list

r = M.primes_above(2)[0]
# Takes the first prime in the list

print(f'Ray class number for Example 5.1 is {(p^4).ray_class_number()}')
print(f'Ray class number for Example 5.2 is {q.ray_class_number()}')
print(f'Ray class number for Example 5.3 is {r.ray_class_number()}')
\end{lstlisting}

The output is shown as below.
\begin{lstlisting}
Ray class number for Example 5.1 is 2
Ray class number for Example 5.2 is 4
Ray class number for Example 5.3 is 2
\end{lstlisting}

\subsection{Denominator ideal computation}
The following code computes the coordinates of the multiples of points considered in \Cref{Table EDS sqrt-2}.
\begin{lstlisting}[language=Python]
K.<r> = QuadraticField(-2)
E = EllipticCurve(K, [0, 1, 0, -3, 1])

P = E(0, -1)
x1_coord = P[0]
y1_coord = P[1]

# Define the point rP on the elliptic curve
s = 1 - 2*(x1_coord - 1)^(-2)

#Here we define the [sqrt{-2}] map
x2_coord = r^(-2) * (x1_coord - 1 + 4 + 2/(x1_coord - 1)) + 1
y2_coord = y1_coord * r^(-3) * s

# [r] denotes the [sqrt{-2}] map
rP = E([x2_coord, y2_coord])

for m in range(1, 9):        # m = 1 to 8
    for n in range(1, 9):    # n = 1 to 8
        R = m * P + n * rP
        print(f"[{m}+{n}r]P = {R}")
        if R[0]:
            print("x-coordinate factorisation:", K.factor(R[0]))
        else:
            print("x-coordinate factorisation: 0")
        print("==============")
\end{lstlisting}
We omit the output here due to the length of the list. Part of the results is given in \Cref{Table EDS sqrt-2}.
\bibliographystyle{abbrv}
\bibliography{Reference2}

\end{document}